\newtheorem{theorem}{Theorem}[section]
\newtheorem{lemma}[theorem]{Lemma}
\newtheorem{proposition}[theorem]{Proposition}
\newtheorem{corollary}[theorem]{Corollary}
\theoremstyle{definition}
\newtheorem{definition}[theorem]{Definition}
\newtheorem{example}[theorem]{Example}
\newtheorem{obs}[theorem]{Observation}
\newtheorem{remark}[theorem]{Remark}
\numberwithin{equation}{section}
\DeclareMathOperator{\Nd}{Nd}
\DeclareMathOperator{\NdS}{\mathfrak {ND}}
\DeclareMathOperator{\inte}{int}
\DeclareMathOperator{\Int}{int}
\DeclareMathOperator{\bd}{bd}
\DeclareMathOperator{\Inac}{Inac}
\DeclareMathOperator{\Ainac}{Ainac}
\DeclareMathOperator{\two}{\textbf{2}}
\DeclareMathOperator{\three}{\textbf{3}}
\DeclareMathOperator{\cl}{cl}
\DeclareMathOperator{\Rs}{Rs}
\theoremstyle{definition}
\theoremstyle{definition}
\theoremstyle{definition}
\theoremstyle{definition}
\theoremstyle{definition}
\theoremstyle{remark}
\theoremstyle{definition}
\theoremstyle{definition}
\begin{document}
	\title[Maximal Nowhere Dense Sublocales]
	{On maximal nowhere dense sublocales}

	\author{Mbekezeli Nxumalo}
	\address{Department of Mathematical Sciences, University of South Africa, P.O. Box 392, 0003 Pretoria, SOUTH AFRICA.}
	\address{Department of Mathematics, Rhodes University, P.O. Box 94, Grahamstown 6140, South Africa.}
	\email{sibahlezwide@gmail.com}
	\subjclass[2010]{06D22}
	\keywords {nowhere dense sublocale, maximal nowhere dense sublocale, homogeneous maximal nowhere dense sublocale, inaccessible sublocale, almost inaccessible sublocale, remote sublocale}
	\thanks{This paper is part of a Ph.D thesis written under the supervision of Prof. Themba Dube. The author  acknowledges and highly appreciates the support he received from his supervisor, and also acknowledges funding from the National Research Foundation of South Africa under Grant 134159}
	\dedicatory{}
	
	
	\let\thefootnote\relax\footnote{}
	
	\begin{abstract} The aim of this paper is to study some variants of nowhere dense sublocales called maximal nowhere dense and homogeneous maximal nowhere dense sublocales. These concepts were initially introduced by Veksler in classical topology. We give some general properties of these sublocales and further examine their relationship with both inaccessible sublocales and remote sublocales. It turns out that a locale has all of its non-void nowhere dense sublocales maximal nowhere dense precisely when all of its its non-void nowhere dense sublocales are inaccessible. We show that the Booleanization of a locale is inaccessible with respect to every dense and open sublocale. In connection to remote sublocales, we prove that, if the supplement of an open dense sublocale $S$ is homogeneous maximal nowhere dense, then every $S^{\#}$-remote sublocale is $^{*}$-remote from $S$. Every open localic map that sends dense elements to dense elements preserves and reflects maximal nowhere dense sublocales. If such a localic map is further injective, then it sends homogeneous maximal nowhere dense sublocales back and forth.
	\end{abstract}
	
	\maketitle
	
	
	\section*{Introduction}\label{sect0}
	
	Veksler \cite{V}, in 1975, introduced the concepts of a maximal nowhere dense subset and a homogeneous maximal nowhere dense subset of a Tychonoff space. He defined a closed subset of a Tychonoff space to be \textit{maximal nowhere dense} if it is not a nowhere dense subset of any other closed nowhere dense subset of the space, and \textit{homogeneous maximal nowhere dense} provided that each of its non-void regular-closed subsets is maximal nowhere dense in the space. These subsets have since appeared in several articles such as \cite{K} and \cite{V1}. In \cite{K}, Koldunov considered connections of these subsets with $\theta$-sets, $P$-sets and $P'$-sets, and in \cite{V1}, Veksler applied maximal nowhere dense subsets to problems of the existence of remote points and of weak $P$-points. 
	
	Nowhere dense sublocales were initially introduced by Plewe in \cite{P} as those sublocales missing the Booleanization of a locale. In locale theory, these sublocales appear in a number of papers, for instance in \cite{D2}, where the author used them to characterize submaximal locales. In \cite{N}, we used nowhere dense sublocales to define \textit{remote sublocales} which are those sublocales missing all nowhere dense sublocales. We further verified that the Booleanization of a locale is a largest remote sublocale. Knowing how nowhere dense sublocales relate with the Booleanization of a locale, it is of interest to know how some types of nowhere dense sublocales also relate with the Booleanization and to also know how these types of nowhere dense sublocales fit in the point-free context independently to their already documented use in classical topology. In this paper, we introduce and study two variants of nowhere density called maximal nowhere dense and homogeneous maximal nowhere dense sublocales. These notions are transferred from classical topology. Here though, we generalize the scope to arbitrary sublocales instead of closed sublocales.  We provide general results of these sublocales and show among other things that maximal nowhere dense sublocales are precisely those sublocales that are not nowhere in any nowhere dense sublocale. It turns out that  in the category of T$_{D}$-spaces, the localic definitions of maximal nowhere dense as well as homogeneous maximal nowhere sublocales are conservative in locales in the sense that a subset of a T$_{D}$-space is maximal nowhere dense (resp. homogeneous maximal nowhere dense) if and only the sublocale it induces is maximal nowhere dense (resp. homogeneous maximal nowhere dense) in the locale of opens. In a non-Boolean strongly submaximal locale, the supplement of the Booleanization is maximal nowhere dense. We study connnections between these sublocales and inaccessible as well as remote sublocales. It turns out that a locale has all of its non-void nowhere dense sublocales maximal nowhere dense precisely when all of its its non-void nowhere dense sublocales are inaccessible. Furthermore, the Booleanization of a locale is inaccessible with respect to a dense and open sublocale. In relation to remote sublocales, we show that if the supplement of an open dense sublocale $S$ is homogeneous maximal nowhere dense, then every $S^{\#}$-remote sublocale is $^{*}$-remote from $S$. 
	
	This paper is organized as follows. The first section introduces the basic background. In the second section, we define maximal nowhere dense sublocales. We characterize these sublocales where we show that a sublocale is maximal nowhere dense if and only if its closure is maximal nowhere dense. We use this property of maximal nowhere density to prove among other things that the localic definition of maximal nowhere density is conservative in locales. As far as maximality of objects is concerned, one would expect a sublocale to be maximal nowhere dense if it is not contained in any other nowhere dense sublocale different from itself. This is not the concept that we use in this paper, however we show that for a locale to have a nowhere dense sublocale that is not contained in any other nowhere dense sublocale, it is necessary and sufficient for the join of all of its nowhere dense sublocales to be nowhere dense.
	
	In the third section, we introduce the concept of a homogeneous maximal nowhere dense sublocale and show that the introduced localic definition is conservative in locales. We prove that homogeneous maximal nowhere density is regular-closed hereditary. 
	
	In section four,  we explore a relationship between maximal nowhere density and inaccessibility. Veksler \cite{V} introduced inaccessible and almost inaccessible points of a Tychonoff space. We transfer these concepts to completely regular locales and further define inaccessible and almost inaccessible sublocales of arbitrary locales. We use these sublocales to characterize locales in which all of their non-void nowhere dense sublocales are maximal nowhere dense.

	  The fifth section discusses a connection between maximal nowhere density and remoteness.

	The last section studies preservation and reflection of maximal nowhere density. We prove that every open localic map that sends dense elements to dense elements preserves and reflects maximal nowhere dense sublocales and if such a localic map is further injective, then it sends homogeneous maximal nowhere dense sublocales back and forth.
	
		\section{Preliminaries}\label{sect1}
		The book \cite{PP1} is our main reference for the theory of locales and sublocales. 
	\subsection{Locales }\label{subsect11}
	
	We recall that a \textit{locale} $L$ is a complete lattice satisfying the following infinite distributive law:
	
	$$a\wedge\bigvee B=\bigvee\{a\wedge b:b\in B\}$$ for every $a\in L$, $B\subseteq L$. 
	The top element and the bottom element of a locale $L$ are denoted by $1_{L}$ and $0_{L}$, respectively, with subscripts dropped if there is no possibility of confusion. An element $p\in L$ is called a \textit{point} if  $p<1$ and $a\wedge b\leq p$ implies $a\leq p$ or $b\leq p$, for all $a,b\in L$. The \textit{pseudocomplement} of an element $a\in L$ is the element $a^{\ast}=\bigvee\{x\in L:x\wedge a=0\}$. An element $a\in L$ is \textit{dense} if $a^{\ast}=0$, \textit{rather below} $b\in L$, denoted by $a\prec b$, if $a^{*}\vee b=1$, \textit{complemented} if $a\vee a^{\ast}=1$ and \textit{completely below} $b\in L$, denoted by $a\prec\!\!\prec b$, if there is a sequence $(x_{q})$ of elements of $L$ indexed by $\mathbb{Q}\cap[0,1]$ such that $a=0$, $b=1$ and $x_{q}\prec x_{r}$ whenever $q<r$. A locale $L$ is \textit{Boolean} if all of its elements are complemented and \textit{completely regular} in case  $a={\bigvee}\{x\in L:x\prec\!\!\prec a\}$ for all $a\in L$.

	A \textit{localic map} is an infimum-preserving function $f:L\rightarrow M$ between locales such that its left adjoint $f^{*}$, called the associated \textit{frame homomorphism}, preserves binary meets. 
	
	\subsection{Sublocales}\label{subsect12}
	A \textit{sublocale} of a locale $L$ is a subset $S\subseteq L$ such that (1) $\bigwedge A\in S$ for all $A\subseteq S$, and (2) for all $x\in L$ and $s\in S$, $x\rightarrow s\in S$, where $\rightarrow$ is a \textit{Heyting operation} on $L$ satisfying: $$a\leq b\rightarrow c\quad\text{if and only if}\quad a\wedge b\leq c$$ for every $a,b,c\in L$. We denote by $\mathcal{S}(L)$ the collection of all sublocales of a locale $L$. A sublocale $S$ of a locale $L$ is \textit{void} if $S=\mathsf{O}=\{1\}$,  \textit{non-void} provided that $S\neq\mathsf{O}$, and \textit{complemented} if it has a complement in $\mathcal{S}(L)$. It \textit{misses} a sublocale $T\subseteq L$ provided that $S\cap T=\mathsf{O}$. Every $S\in\mathcal{S}(L)$ has a \textit{supplement}, denoted by $L\smallsetminus S$ or $S^{\#}$, which is the smallest sublocale $T$ of $L$ such that $S\vee T=L$. A sublocale $S\subseteq L$ is \textit{linear} if  $$S\cap \bigvee\{C_{i}:i\in I\}=\bigvee\{S\cap C_{i}:i\in I\}$$ for each family $\{C_{i}:i\in I\}\subseteq \mathcal{S}(L)$. Complemented sublocales are linear. The sublocales
	$${\mathfrak{c}}(a)=\{x\in L:a\leq x\}\quad\text{and}\quad \mathfrak{o}(a)=\{a\rightarrow x:x\in L\},$$ of a locale $L$ are respectively the \textit{closed} and \textit{open} sublocales induced by $a\in L$. They are complements of each other. A sublocale is \textit{clopen} if it is both closed and open. For any $S\in\mathcal{S}(L)$, its \textit{closure}, denoted by $\overline{S}$, is the smallest closed sublocale containing $S$ and its \textit{interior}, denoted by $\inte(S)$, is the largest open sublocale contained in $S$.
	By a \textit{dense} sublocale we mean a sublocale $S$ of $L$ such that $\overline{S}=L$. In fact, a sublocale is dense if and only if it contains the bottom element of the locale. The sublocale $\mathfrak{B}(L)=\{x\rightarrow 0:x\in L\}$ of a locale $L$ is the \textit{smallest dense sublocale} of $L$ and is referred to as the Booleanization of $L$.
	In classical topology, a subset of a topological space is \textit{nowhere dense} if the interior of its closure is empty. By a \textit{nowhere dense} sublocale we mean a sublocale that misses the smallest dense sublocale and by a \textit{regular-closed} sublocale we refer to a sublocale which is the closure of some open sublocale. We denote by $\Nd(L)$ the join of all nowhere dense sublocales of a locale $L$. We shall use the prefix $S$- for localic properties defined on a sublocale $S$ of $L$. 
	
	By a \textit{nucleus} we mean a function $\nu:L\rightarrow L$ such that (1) $a\leq \nu(a)$,
	(2) $a\leq b$ $\Longrightarrow$ $\nu(a)\leq\nu(b)$,
	(3) $\nu\nu(a)=\nu(a)$, and
	(4) $\nu(a\wedge b)=\nu(a)\wedge\nu(b)$ for every $a,b\in L$. For each sublocale $S\subseteq L$ there is an onto frame homomorphism $\nu_{S}:L\rightarrow S$ defined by $$\nu_{S}(a)={\bigwedge}\{s\in S: a\leq s\}.$$ Open sublocales and closed sublocales of a sublocale $S$ of $L$ are given in terms of nucleus as $$\mathfrak{o}_{S}(\nu_{S}(a))=S\cap \mathfrak{o}(a)\quad\text{and}\quad {\mathfrak{c}_{S}}(\nu_{S}(a))=S\cap \mathfrak{c}(a),$$ respectively, for $a\in L$. For any $S\in\mathcal{S}(L)$ and $x\in L$, $S\subseteq\mathfrak{o}(x)$ if and only if $\nu_{S}(x)=1$.
	
	A localic map $f:L\rightarrow M$ induces the following functions: (1) $f[-]:\mathcal{S}(L)\rightarrow \mathcal{S}(M)$ given by the set-theoretic image of each sublocale of $L$ under $f$, and (2)  $f_{-1}[-]:\mathcal{S}(M)\rightarrow\mathcal{S}(L)$ given by $$f_{-1}[T]={\bigvee}\{A\in \mathcal{S}(L):A\subseteq f^{-1}(T)\}.$$ For a localic map $f:L\rightarrow M$, $x\in M$ and $A\in\mathcal{S}(L)$,  $$f_{-1}[\mathfrak{c}_{M}(x)]=\mathfrak{c}_{L}(f^{*}(x));\quad f_{-1}[\mathfrak{o}_{M}(x)]=\mathfrak{o}_{L}(f^{*}(x))\quad\text{and}\quad f[\overline{A}]\subseteq \overline{f[A]}.$$By an \textit{open} localic map we refer to a localic map $f:L\rightarrow M$ such that $f[A]$ is open for each open $A\in\mathcal{S}(L)$.
	
	We denote by $\mathfrak{O}X$ the locale of open subsets of a topological space $X$, and denote by $\widetilde{A}$ or $S_{A}$ a sublocale of $\mathfrak{O}X$ \textit{induced} by a subset $A$ of a topological space $X$. For a topological space $X$ and $A,B\subseteq X$, we have that $\widetilde{A}\vee\widetilde{B}=\widetilde{A\cup B}$ and $\widetilde{A\cap B}\subseteq\widetilde{A}\cap \widetilde{B}$.
	
	\section{Maximal nowhere dense sublocales}\label{mndsection} 
	
	Veksler \cite{V} says that a closed nowhere dense subset of a Tychonoff space  is \textit{maximal nowhere dense} if it is not nowhere dense in any other closed nowhere dense subset of the space. We broaden our study to arbitrary nowhere dense subsets of any topological space. We give the following definition.
	
	\begin{definition}\label{mnddefspaces}
		A nowhere dense subset $N$ of a topological space $X$ is \textit{maximal nowhere dense} in case there is no nowhere dense subset $K$ of $X$ such that $N$ is nowhere dense in $K$.
	\end{definition}
We aim to introduce maximal nowhere dense sublocales such that a subset of a topological space $X$ is maximal nowhere dense if and only if the sublocale it induces is maximal nowhere dense in $\mathfrak{O}X$. 
	
	We transfer maximal nowhere density to locales by replacing subsets with sublocales from Definition \ref{mnddefspaces}. 
	
	\begin{definition}\label{mnddef}
		Let $L$ be a locale. A nowhere dense sublocale $N$ of $L$ is \textit{maximal nowhere dense (m.n.d)} if there is no nowhere dense sublocale $S$ of $L$ such that $N$ is nowhere dense in $S$.
	\end{definition}
	
	We consider some examples. We remind the reader that a locale $L$ is nowhere dense as a sublocale of itself if and only if $L=\{1\}$. 
	\begin{example}\label{mndexample}
		(1)	In a non-Boolean strongly submaximal locale $L$ (according to \cite{D2}, a locale is \textit{strongly submaximal} if each of its dense sublocales is open), $$\Nd(L)=\bigvee\{N\in\mathcal{S}(L):N\text{ is nowhere dense in }L\}$$ is maximal nowhere dense. To see this, we start by showing that in a strongly submaximal locale $L$, $\Nd(L)$ is nowhere dense. Indeed, observe that in a strongly submaximal locale $L$, the dense sublocale $\mathfrak{B}L$ is open (in particular, complemented), so \begin{align*}
			\mathfrak{B}L\cap \Nd(L)&\quad=\quad\mathfrak{B}L\cap \bigvee\{S: S\ \text{is nowhere dense}\}\\
			&\quad=\quad\bigvee\left\{\mathfrak{B}L\cap S:S\ \text{is nowhere dense}\right\}\\
			&\quad=\quad\bigvee\{\mathsf{O}\}=\mathsf{O}
		\end{align*} 
		making $\Nd(L)$ nowhere dense.
		
		Now, let $A\in\mathcal{S}(L)$ be nowhere dense such that $\Nd(L)$ is nowhere dense in $A$. By the nature of $\Nd(L)$, $A=\Nd(L)$, making $\Nd(L)$ nowhere dense as a sublocale of itself. Hence $\Nd(L)=\mathsf{O}$. This means that $\mathsf{O}$ is the only nowhere dense sublocale of $L$, which contradicts that $L$ is non-Boolean.
		
		An easy computation may show that $\Nd(L)=L\smallsetminus\mathfrak{B}L$ whenever $\Nd(L)$ is nowhere dense in a locale $L$. So, in a non-Boolean strongly submaximal locale $L$, $L\smallsetminus\mathfrak{B}L$ is maximal nowhere dense.

		(2) $\mathsf{O}$ is not maximal nowhere dense. This follows since $\mathsf{O}$ is nowhere dense as a sublocale of itself. As a result, we get that a Boolean locale does not have a maximal nowhere dense sublocale. This also tells us that $N$ in Definition \ref{mnddef} cannot be open. Otherwise, $L\smallsetminus\overline{N}$ is dense (since a sublocale $N$ is nowhere dense if and only if $L\smallsetminus\overline{ N}$ is nowhere dense, see \cite{N}), making $L\smallsetminus N$ dense because $L\smallsetminus\overline{N}\subseteq L\smallsetminus N$. Since $N$ is non-void (since it is maximal nowhere dense, making it different from $\mathsf{O}$) and open, we must have that $(L\smallsetminus N)\cap N\neq\mathsf{O}$ which is not possible because $N$ is complemented.
	\end{example}
	
	In Proposition \ref{mndcmnd} below, we give a characterization of maximal nowhere dense sublocales some part of which will be used in calculations that involve maximal nowhere dense sublocales.
	
	Denote by $x^{*A}$ the pseudocomplement of an $x\in A$, calculated in $A\in \mathcal{S}(L)$. Recall that for a dense $A\in\mathcal{S}(L)$, we have the equalities
	$$x^{*A}=x\rightarrow_{A} 0_{A} =x\rightarrow 0_{L}=x^{*}$$ so that $\mathfrak{B}A=\mathfrak{B}L$.
	
	For use below, we prove the following lemma.
	
	\begin{lemma}\label{mndcmndlemma}
		A sublocale $N$ of a locale $L$ is nowhere dense in $K\in\mathcal{S}(L)$ iff $N$ is nowhere dense in $\overline{K}$.
	\end{lemma}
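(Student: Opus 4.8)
The plan is to reduce both sides of the equivalence to a single statement about the Booleanization. Recall that a sublocale is nowhere dense in a given locale precisely when it misses the smallest dense sublocale of that locale; so, regarding $K$ and $\overline{K}$ as locales in their own right (and bearing in mind that ``$N$ is nowhere dense in $K$'' already presupposes $N\subseteq K$, whence $N\subseteq\overline{K}$ since $K\subseteq\overline{K}$), the whole lemma will follow once we establish that $\mathfrak{B}(K)=\mathfrak{B}(\overline{K})$ as subsets of $L$.

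The crucial point is that $K$ is \emph{dense} in $\overline{K}$. To see this, set $a=\bigwedge K$; since $K$ is a sublocale it is closed under arbitrary meets, so $a\in K$, and by the standard formula for the closure one has $\overline{K}=\mathfrak{c}(a)$, whose bottom element is exactly $a$. Because a sublocale is dense iff it contains the bottom element of the ambient locale, $K$---which contains $0_{\overline{K}}=a$---is dense in $\overline{K}$. Applying the identity recalled just above (``$\mathfrak{B}A=\mathfrak{B}L$ for a dense $A\in\mathcal{S}(L)$''), this time with $\overline{K}$ in the role of the ambient locale and $K$ in the role of $A$, yields $\mathfrak{B}(K)=\mathfrak{B}(\overline{K})$.

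It then remains only to chain the equivalences: $N$ is nowhere dense in $K$ iff $N\cap\mathfrak{B}(K)=\mathsf{O}$ iff $N\cap\mathfrak{B}(\overline{K})=\mathsf{O}$ iff $N$ is nowhere dense in $\overline{K}$, where the middle equivalence is the identity just proved. I do not expect a real obstacle here; the one thing to keep straight is the bookkeeping around ``nowhere dense in a sublocale $S$'', namely that for $N\subseteq S$ this does unwind to $N\cap\mathfrak{B}(S)=\mathsf{O}$ and that $\mathfrak{B}(S)$ computed inside $S$ is literally the same subset of $L$ that appears in the displayed identity. After that the argument is a one-line substitution.
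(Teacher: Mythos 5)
Your argument is correct and is essentially the paper's own proof: both rest on the observation that $K$ is dense in its closure, so that $\mathfrak{B}K=\mathfrak{B}\overline{K}$, and then unwind ``nowhere dense in $S$'' as missing $\mathfrak{B}S$. Your explicit verification that $0_{\overline{K}}=\bigwedge K\in K$ just spells out the standard fact the paper invokes in one line.
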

\begin{proof}
	Recall that $\mathfrak{B}S=\mathfrak{B}L$ for every dense $S\in\mathcal{S}(L)$. Because every sublocale is dense in its closure, we have $N\cap \mathfrak{B}K=N\cap\mathfrak{B}\overline{K}$, which implies $N$ is nowhere dense in $K$ if and only if $N$ is nowhere dense in $\overline{K}$.
\end{proof}
We shall use $\NdS(L)$ to denote the collection of all nowhere dense sublocales of a locale $L$.

Recall from \cite{N} that a sublocale is nowhere dense if and only if its closure is nowhere dense.
	
	\begin{proposition}\label{mndcmnd}
		Let $N$ be a nowhere dense sublocale of a locale $L$. The following statements are equivalent.
		\begin{enumerate}
			\item $N$ is maximal nowhere dense.
			\item There is no closed nowhere dense sublocale of $L$ having $N$ as its nowhere dense sublocale.
			\item $\overline{N}$ is maximal nowhere dense.
			\item $\overline{N}$ is not a nowhere dense sublocale of any closed nowhere dense sublocale of $L$.
			\item There is no dense $y\in L$ such that $y\leq \bigwedge N$ and $\left(\bigwedge N\right)^{*\mathfrak{c}(y)}= y$.
		\end{enumerate}  
	\end{proposition}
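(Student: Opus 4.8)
The plan is to run all the equivalences through \emph{closed} nowhere dense sublocales and then to recognize (5) as a concrete restatement of (4) in terms of the element $\bigwedge N$. Before anything else I would record two computations inside closed sublocales. First, a closed sublocale $\mathfrak{c}(y)$ of $L$ is nowhere dense exactly when $y$ is dense: since $\mathfrak{B}L=\{z\in L:z=z^{**}\}$, the set $\mathfrak{c}(y)\cap\mathfrak{B}L=\{x\in L:y\leq x=x^{**}\}$ always contains $y^{**}$, so it equals $\mathsf{O}$ iff $y^{**}=1$ iff $y^{*}=0$. Second, if $y\leq a$ then, computing in the frame $\mathfrak{c}(y)$, one has $\mathfrak{c}(a)=\mathfrak{c}_{\mathfrak{c}(y)}(a)$ and $a^{*\mathfrak{c}(y)}=a\rightarrow y$; applying the first computation inside the locale $\mathfrak{c}(y)$ then gives that $\mathfrak{c}(a)$ is nowhere dense in $\mathfrak{c}(y)$ iff $a^{*\mathfrak{c}(y)}=y$. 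Throughout I would also use that $\overline{N}=\mathfrak{c}\bigl(\bigwedge N\bigr)$, that a sublocale is nowhere dense iff its closure is, and that nowhere density passes to smaller sublocales of the same ambient locale.

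Next, (1)$\Leftrightarrow$(2). One direction is immediate from the definition, a closed nowhere dense sublocale being in particular a nowhere dense sublocale. Conversely, if $N$ fails to be maximal nowhere dense, witnessed by a nowhere dense $S$ in which $N$ is nowhere dense, then by Lemma \ref{mndcmndlemma} $N$ is nowhere dense in $\overline{S}$, which is closed and, as the closure of a nowhere dense sublocale, nowhere dense; this contradicts (2). For (2)$\Leftrightarrow$(4): if $\overline{N}$ is nowhere dense in a closed nowhere dense $K$, then $N\subseteq\overline{N}\subseteq K$ and $N$ inherits nowhere density in $K$; conversely, if $N$ is nowhere dense in a closed nowhere dense $K$, then, $K$ being closed, the closure of $N$ taken inside $K$ is again $\overline{N}$, so applying ``nowhere dense $\Leftrightarrow$ closure nowhere dense'' inside the locale $K$ shows $\overline{N}$ is nowhere dense in $K$. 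Finally, since $\overline{N}$ is nowhere dense whenever $N$ is, the equivalence (1)$\Leftrightarrow$(2) applied with $\overline{N}$ in the role of $N$ reads exactly as (3)$\Leftrightarrow$(4). Hence (1), (2), (3), (4) are all equivalent.

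It then remains to prove (4)$\Leftrightarrow$(5). By the first recorded computation the closed nowhere dense sublocales of $L$ are precisely the sublocales $\mathfrak{c}(y)$ with $y$ dense, and $\overline{N}=\mathfrak{c}\bigl(\bigwedge N\bigr)$ is contained in $\mathfrak{c}(y)$ iff $y\leq\bigwedge N$. By the second recorded computation, $\overline{N}$ is nowhere dense in such a $\mathfrak{c}(y)$ iff $\left(\bigwedge N\right)^{*\mathfrak{c}(y)}=y$. Thus the existence of a closed nowhere dense sublocale of $L$ having $\overline{N}$ as a nowhere dense sublocale coincides with the existence of a dense $y\leq\bigwedge N$ satisfying $\left(\bigwedge N\right)^{*\mathfrak{c}(y)}=y$; negating both sides yields (4)$\Leftrightarrow$(5).

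I expect the only place demanding care to be the closed-sublocale arithmetic underlying the two recorded computations — the identity $a^{*\mathfrak{c}(y)}=a\rightarrow y$, the identification $\mathfrak{c}(a)=\mathfrak{c}_{\mathfrak{c}(y)}(a)$ when $y\leq a$, and the observation that taking closures inside a closed sublocale agrees with taking closures in $L$. These are routine once one keeps track of the ambient locale, and no genuine obstacle is anticipated: after the translation of (5) into the language of closed nowhere dense sublocales, what remains is the soft reduction (1)$\Leftrightarrow$(2)$\Leftrightarrow$(4)$\Leftrightarrow$(3) driven by Lemma \ref{mndcmndlemma} and the fact that a sublocale is nowhere dense iff its closure is.
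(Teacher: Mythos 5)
Your proof is correct and follows essentially the same route as the paper: reduce everything to closed nowhere dense sublocales via Lemma \ref{mndcmndlemma} and the fact that a sublocale is nowhere dense iff its closure is, then translate (4) into (5) using $\overline{N}=\mathfrak{c}\left(\bigwedge N\right)$ and the identity $a^{*\mathfrak{c}(y)}=a\rightarrow y$. The only difference is organizational (pairwise equivalences rather than the paper's cycle $(1)\Rightarrow(2)\Rightarrow(3)\Rightarrow(4)\Rightarrow(5)\Rightarrow(1)$), with the dictionary ``closed nowhere dense $=\mathfrak{c}(y)$ with $y$ dense'' made explicit.
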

	\begin{proof}
		$(1)\Longrightarrow (2)$: Follows since there is no nowhere dense (particularly, closed nowhere dense) sublocale $A$ of $L$ in which $N\in\NdS(A)$.
		
		$(2)\Longrightarrow(3)$: Nowhere density of $\overline{N}$ follows since a sublocale is nowhere dense if and only if its closure is nowhere dense, see \cite{N}. 
		
		Now, suppose that there is a nowhere dense sublocale $K$ such that $\overline{N}$ is nowhere dense in $K$. Since every sublocale of a nowhere dense sublocale is nowhere dense, $N$ is nowhere dense in $K$. By Lemma \ref{mndcmndlemma}, $N$ is nowhere dense in the closed nowhere dense sublocale $\overline{K}$, which contradicts the hypothesis in condition (2). Thus $\overline{N}$ is maximal nowhere dense.

		$(3)\Longrightarrow(4)$: Trivial.

		$(4)\Longrightarrow(5)$: Let $y\in L$ be dense such that $y\leq\bigwedge N$ and $\left(\bigwedge N\right)^{*\mathfrak{c}(y)}= y$. This means that $\bigwedge N$ is $\mathfrak{c}(y)$-dense which implies that $\mathfrak{c}_{\mathfrak{c}(y)}\left(\bigwedge N\right)$ is $\mathfrak{c}(y)$-nowhere dense. Because $\mathfrak{c}_{\mathfrak{c}(y)}\left(\bigwedge N\right)=\overline{N}\cap\mathfrak{c}(y)=\overline{N}$, we have that $\overline{N}$ is nowhere dense in the closed sublocale $\mathfrak{c}(y)$, which contradicts the hypothesis.

		$(5)\Longrightarrow(1)$: Assume that $N$ is nowhere dense in a nowhere dense sublocale $K$ of $L$. By Lemma \ref{mndcmndlemma}, $N$ is nowhere dense in the closed nowhere dense sublocale $\overline{K}$. Therefore $\bigwedge K$ is a dense element of $L$ such that $\bigwedge K\leq \bigwedge N$ and $\left(\bigwedge N\right)^{*\overline{K}}= \bigwedge K$, which is a contradiction.
	\end{proof}
	In terms of closed nowhere dense sublocales, we have the following result which holds since for every $a,b\in L$, \begin{align*}
		a^{*\mathfrak{c}(b)}&\quad=\quad\bigvee_{\mathfrak{c}(b)}\{x\in\mathfrak{c}(b):a\wedge x=0_{\mathfrak{c}(b)}=b\}\\
		&\quad=\quad\nu_{\mathfrak{c}(b)}\left(\bigvee\{x\in\mathfrak{c}(b):a\wedge x=b\}\right)\\
		&\quad=\quad b\vee \left(\bigvee\{x\in L:a\wedge x=b\}\right)\\
		&\quad=\quad b\vee (a\rightarrow b)\\
		&\quad=\quad a\rightarrow b.
	\end{align*}
	\begin{corollary}
		Let $L$ be a locale and $\mathfrak{c}(x)\in\mathcal{S}(L)$. Then $\mathfrak{c}(x)$ is maximal nowhere dense if and only if $x$ is dense and there is no dense $y\in L$ such that $y\leq x$ and $x\rightarrow y=y$.
	\end{corollary}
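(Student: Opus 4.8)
The plan is to deduce this corollary directly from the equivalence $(1)\Longleftrightarrow(5)$ of Proposition \ref{mndcmnd}, applied to the closed nowhere dense sublocale $N=\mathfrak{c}(x)$, together with the identity $a^{*\mathfrak{c}(b)}=a\rightarrow b$ established in the computation immediately preceding the corollary.

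First I would dispose of the density clause that appears on both sides of the biconditional. Recall that a closed sublocale $\mathfrak{c}(x)$ is nowhere dense exactly when $x$ is a dense element of $L$: indeed $\inte(\mathfrak{c}(x))=\mathfrak{o}(x^{*})$, so $\mathfrak{c}(x)$ has void interior precisely when $x^{*}=0$. Hence if $\mathfrak{c}(x)$ is maximal nowhere dense then it is in particular nowhere dense, so $x$ is dense; and conversely the density of $x$ is exactly what puts $\mathfrak{c}(x)$ in $\NdS(L)$, which is what is needed before Proposition \ref{mndcmnd} can be invoked with $N=\mathfrak{c}(x)$. This reduces the problem, under the standing assumption that $x$ is dense, to showing that $\mathfrak{c}(x)$ is maximal nowhere dense if and only if there is no dense $y\in L$ with $y\leq x$ and $x\rightarrow y=y$.

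Next I would translate condition (5) of Proposition \ref{mndcmnd}. Since $\mathfrak{c}(x)=\{z\in L:x\leq z\}$, its bottom element is $x$, i.e. $\bigwedge\mathfrak{c}(x)=x$. Substituting $N=\mathfrak{c}(x)$ into (5) gives: there is no dense $y\in L$ such that $y\leq x$ and $x^{*\mathfrak{c}(y)}=y$. Now applying the displayed identity $a^{*\mathfrak{c}(b)}=a\rightarrow b$ with $a=x$ and $b=y$ rewrites $x^{*\mathfrak{c}(y)}$ as $x\rightarrow y$, so (5) becomes: there is no dense $y\in L$ with $y\leq x$ and $x\rightarrow y=y$. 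Combining this with the equivalence $(1)\Longleftrightarrow(5)$ of Proposition \ref{mndcmnd} yields the claim.

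There is essentially no genuine obstacle here; the statement is just the specialization of Proposition \ref{mndcmnd} to closed sublocales. The only point demanding a little care is the bookkeeping around the density hypothesis — one must first observe that ``$\mathfrak{c}(x)$ is nowhere dense'' and ``$x$ is dense'' are interchangeable before the machinery of Proposition \ref{mndcmnd} applies — after which the identifications $\bigwedge\mathfrak{c}(x)=x$ and $x^{*\mathfrak{c}(y)}=x\rightarrow y$ are immediate.
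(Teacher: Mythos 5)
Your proposal is correct and matches the paper's own justification exactly: the corollary is obtained by specializing Proposition \ref{mndcmnd}$(1)\Longleftrightarrow(5)$ to $N=\mathfrak{c}(x)$ (so that $\bigwedge N=x$) and rewriting $x^{*\mathfrak{c}(y)}$ as $x\rightarrow y$ via the displayed computation preceding the statement. Your extra remark that ``$\mathfrak{c}(x)$ is nowhere dense'' is equivalent to ``$x$ is dense'' is accurate and only makes the bookkeeping around the density clause more explicit than the paper does.
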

	
	Proposition \ref{mndcmnd} suggests that, when doing calculations about maximal nowhere dense sublocales, there is no loss of generality with restricting to closed nowhere dense sublocales.
	
	We give the following result regarding binary intersections of induced sublocales which will be used below. Recall that for any open subset $U$ of a topological space $X$, $\mathfrak{o}(U)=\widetilde{U}$ and $\mathfrak{O}X\smallsetminus\widetilde{U}=\widetilde{X\smallsetminus U}$.
	
	\begin{lemma}\label{binaryintersection}
		Let $X$ be a topological space. For any $A, B\subseteq X$ with $\widetilde{B}$ complemented in $\mathfrak{O}X$, $\widetilde{A\cap B}=\widetilde{A}\cap \widetilde{B}$.
	\end{lemma}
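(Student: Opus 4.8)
The plan is to prove the nontrivial inclusion $\widetilde{A}\cap\widetilde{B}\subseteq\widetilde{A\cap B}$, since the reverse inclusion $\widetilde{A\cap B}\subseteq\widetilde{A}\cap\widetilde{B}$ is already recorded in the preliminaries for arbitrary subsets. Because $\widetilde{B}$ is complemented in $\mathfrak{O}X$, I first want to identify its complement concretely. I would argue that the complement of $\widetilde{B}$ is the closed sublocale $\mathfrak{c}(U)$ where $U=\Int(X\smallsetminus\overline{B})$, equivalently that $\widetilde{B}=\mathfrak{o}(U)=\widetilde{U}$ for this open set $U$; indeed a sublocale induced by a subset is complemented precisely when it is clopen, and a clopen sublocale of $\mathfrak{O}X$ is $\mathfrak{o}(V)=\widetilde{V}$ for some open $V$. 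Hence without loss of generality $\widetilde{B}=\widetilde{V}$ with $V$ open, and using $\widetilde{A\cap V}$ versus $\widetilde{A}\cap\widetilde{V}$ becomes tractable.

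With $B$ replaced by an open set $V$, the key step is the general fact that for an open $V$ and \emph{any} $A\subseteq X$ one has $\widetilde{A\cap V}=\widetilde{A}\cap\widetilde{V}$. I would prove $\widetilde{A}\cap\widetilde{V}\subseteq\widetilde{A\cap V}$ by working with nuclei: an open sublocale $\mathfrak{o}(V)$ intersected with an arbitrary sublocale $S$ is the open sublocale of $S$ determined by $\nu_S(V)$, and for $S=\widetilde{A}$ this computes to the sublocale induced by $A\cap V$ inside $X$, because relativising an open set to a subspace commutes with the induced-sublocale construction. Concretely, one checks that $\widetilde{A}\cap\mathfrak{o}(V)=\mathfrak{o}_{\widetilde A}(\nu_{\widetilde A}(V))$ and that this coincides with the sublocale of $\mathfrak{O}X$ induced by the subspace-open set $A\cap V$ of the subspace $A$, which is exactly $\widetilde{A\cap V}$. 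Alternatively, and perhaps more cleanly, I would use that $\widetilde{B}$ complemented gives $\widetilde{B}$ linear, so $\widetilde{B}$ distributes over joins of sublocales, and then express $\widetilde{A}$ as a join of one-point sublocales (or of closed sublocales $\mathfrak{c}(x)$) indexed by points/opens determined by $A$, reducing to the pointwise statement $\widetilde{\{a\}\cap B}=\widetilde{\{a\}}\cap\widetilde{B}$, which is immediate from the definition of the induced sublocale at a single point together with complementedness of $\widetilde B$.

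I expect the main obstacle to be the bookkeeping in the nucleus computation: one must verify that $\nu_{\widetilde A}(V)$, when unwound, really does describe the subspace topology on $A$ restricted by $V$, i.e. that the open sublocales of $\widetilde A$ are precisely those induced by open subsets of the subspace $A$, and that $\mathfrak{o}(V)\cap\widetilde A$ corresponds under this identification to $V\cap A$. This is where the hypothesis that $\widetilde B$ is complemented is genuinely used — for a non-open $B$ the induced sublocale need not interact so well with intersections, and the inclusion can be strict. Once the reduction to open $B$ and the pointwise (or nucleus) identity are in place, combining them with the already-known inclusion $\widetilde{A\cap B}\subseteq\widetilde A\cap\widetilde B$ yields the desired equality, completing the proof.
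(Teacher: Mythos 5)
Your reduction to the case of an open $B$ rests on the claim that a sublocale of $\mathfrak{O}X$ induced by a subset is complemented precisely when it is clopen, and that claim is false; this is the genuine gap. Every closed sublocale is complemented, and for a closed subset $F\subseteq X$ one has $\widetilde{F}=\mathfrak{c}(X\smallsetminus F)$, which is open only when $F$ is clopen as a subset (take $X=\mathbb{R}$ and $F=[0,1]$: here $\widetilde{F}$ is complemented but not open). Worse, the closed case is exactly the one the paper needs: the lemma is invoked for $\widetilde{K}=\mathfrak{c}(X\smallsetminus K)$ with $K$ closed in Proposition \ref{mnd}, and for a merely complemented $\widetilde{F}$ in Lemma \ref{N1} and Lemma \ref{Knd}, so an argument covering only open $B$ would not support the applications. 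Your fallback route through $\widetilde{A}=\bigvee_{x\in A}\widetilde{\{x\}}$ and linearity of $\widetilde{B}$ is also not ``immediate'': the pointwise inclusion $\widetilde{\{a\}}\cap\widetilde{B}\subseteq\widetilde{\{a\}\cap B}$ is just an instance of the statement being proved, and it genuinely fails without a separation hypothesis (in a two-point indiscrete space $\widetilde{\{x\}}=\widetilde{\{y\}}=\mathfrak{O}X$, both complemented, yet $\widetilde{\{x\}\cap\{y\}}=\mathsf{O}$). The one computation you carry out in detail, namely $\widetilde{A}\cap\mathfrak{o}(V)=\widetilde{A\cap V}$ for open $V$ via the nucleus $\nu_{\widetilde{A}}$, is correct, but it only treats a special case.

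The paper's argument is shorter and needs no structure theory for complemented sublocales: from the set-theoretic inclusion $A\subseteq(A\cap B)\cup(X\smallsetminus B)$ and the fact that the induced-sublocale operation turns unions into joins, it gets $\widetilde{A}\subseteq\widetilde{A\cap B}\vee\widetilde{X\smallsetminus B}=\widetilde{A\cap B}\vee(\mathfrak{O}X\smallsetminus\widetilde{B})$, and then intersects with $\widetilde{B}$, using that a complemented sublocale is linear (so it distributes over the join and annihilates its own supplement) to conclude $\widetilde{A}\cap\widetilde{B}\subseteq\widetilde{A\cap B}$. If you want to salvage your approach, drop the reduction to open $B$ and apply linearity of $\widetilde{B}$ to the two-term join decomposition of $\widetilde{A}$ above rather than to a pointwise decomposition; the identification $\widetilde{X\smallsetminus B}=\mathfrak{O}X\smallsetminus\widetilde{B}$ is then the only remaining point where the hypotheses on $B$ (and, in the paper's applications, the ambient T$_{D}$ assumption) do any work.
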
 
	\begin{proof} It is clear that $\widetilde{A\cap B}\subseteq\widetilde{A}\cap \widetilde{B}$.
		
		On the other hand, $A\cap B\subseteq A\cap B$ implies that $A\subseteq (A\cap B)\cup (X\smallsetminus B)$. Because $\widetilde{S\cup T}=\widetilde{S}\vee \widetilde{T}$ for all $S,T\subseteq X$, we have that $\widetilde{A}\subseteq\widetilde{A\cap B}\vee \widetilde{X\smallsetminus B}=\widetilde{A\cap B}\vee (\mathfrak{O}X\smallsetminus\widetilde{B})$. Therefore $$\widetilde{A}\cap\widetilde{B}=\widetilde{A}\cap (\mathfrak{O}X\smallsetminus(\mathfrak{O}X\smallsetminus\widetilde{B}))\subseteq \widetilde{A\cap B}$$ since $\widetilde{B}$ is complemented.
	\end{proof}

	We show below that a subset is nowhere dense in a subspace of a T$_{D}$-space precisely when the sublocale it induces is nowhere dense in the sublocale induced by the subspace. We shall make use of \cite[Proposition 4.1.]{BP1} which states that a sublocale is nowhere dense if and only if its closure has a void interior. We remind the reader that open sublocales of a sublocale $S$ of a locale $L$ are the $\mathfrak{o}_{S}(a)=S\cap \mathfrak{o}(a)$ for $a\in S$. So, in $\mathfrak{O}X$, open sublocales of $S\in\mathcal{S}(\mathfrak{O}X)$ are the $\mathfrak{o}_{S}(U)=S\cap\mathfrak{o}(U)=S\cap\widetilde{U}$ for $U\in S$.
	
	We give the following lemma which we shall use below.
	
	\begin{lemma}\label{N1}
		Let $X$ be a T$_{D}$-space. For each subset $N$ such that $\widetilde{N}$ is complemented in $\mathfrak{O}X$,  $N\cap A=\emptyset$ if and only if $\widetilde{N}\cap \widetilde{A}=\mathsf{O}$. 
	\end{lemma}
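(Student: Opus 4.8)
This is a short lemma, and the plan is to reduce both implications to Lemma \ref{binaryintersection} together with two basic facts about the assignment $A\mapsto\widetilde{A}$ for a T$_{D}$-space $X$: that $\widetilde{\emptyset}=\mathsf{O}$ (the sublocale induced by the empty subspace is void), and that $A\mapsto\widetilde{A}$ is injective on subsets of $X$ (equivalently, an order-embedding, using $\widetilde{B\cup C}=\widetilde{B}\vee\widetilde{C}$). The second fact is exactly where the T$_{D}$ hypothesis enters; the first will be needed for the trivial side too.

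First I would note that since $\widetilde{N}$ is complemented in $\mathfrak{O}X$, Lemma \ref{binaryintersection} applies with the roles $A:=A$, $B:=N$ and yields $\widetilde{N}\cap\widetilde{A}=\widetilde{N\cap A}$. For the forward implication, if $N\cap A=\emptyset$ then $\widetilde{N}\cap\widetilde{A}=\widetilde{N\cap A}=\widetilde{\emptyset}=\mathsf{O}$. For the converse, if $\widetilde{N}\cap\widetilde{A}=\mathsf{O}$ then $\widetilde{N\cap A}=\mathsf{O}=\widetilde{\emptyset}$, and injectivity of $A\mapsto\widetilde{A}$ on the T$_{D}$-space $X$ forces $N\cap A=\emptyset$. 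If one prefers to avoid invoking injectivity directly, the converse can instead be argued by a point argument: supposing $x\in N\cap A$, monotonicity of $\widetilde{(\cdot)}$ gives $\widetilde{\{x\}}\subseteq\widetilde{N}$ and $\widetilde{\{x\}}\subseteq\widetilde{A}$, hence $\widetilde{\{x\}}\subseteq\widetilde{N}\cap\widetilde{A}=\mathsf{O}$; but in a T$_{D}$-space $\widetilde{\{x\}}$ is a one-point sublocale of $\mathfrak{O}X$, in particular non-void, a contradiction. (A third, equivalent route: check $\widetilde{N}^{\#}=\mathfrak{O}X\smallsetminus\widetilde{N}=\widetilde{X\smallsetminus N}$ from $\widetilde{(X\smallsetminus N)\cup N}=\mathfrak{O}X$ and Lemma \ref{binaryintersection}, observe that a sublocale missing a complemented sublocale lies inside its supplement, so $\widetilde{A}\subseteq\widetilde{X\smallsetminus N}$, and conclude $A\subseteq X\smallsetminus N$ by the T$_{D}$ order-embedding.)

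There is no genuine obstacle here; the only thing to be careful about is that the T$_{D}$ separation is used at precisely one place — in passing from the sublocale equality $\widetilde{N\cap A}=\widetilde{\emptyset}$ (or the inclusion $\widetilde{A}\subseteq\widetilde{X\smallsetminus N}$) back to the corresponding statement about subsets — since for a general space the assignment $A\mapsto\widetilde{A}$ need not reflect inclusions. One should also keep in mind that the complementedness of $\widetilde{N}$ is exactly what licenses the use of Lemma \ref{binaryintersection}; with only the trivial inclusion $\widetilde{N\cap A}\subseteq\widetilde{N}\cap\widetilde{A}$ at hand, neither implication goes through as stated.
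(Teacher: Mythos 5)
Your proof is correct and follows essentially the same route as the paper: the forward implication via Lemma \ref{binaryintersection} (using complementedness of $\widetilde{N}$), and the converse via the T$_{D}$ order-embedding $B\subseteq C\Leftrightarrow\widetilde{B}\subseteq\widetilde{C}$. One small inaccuracy in your closing remark: the converse implication actually \emph{does} go through with only the trivial inclusion $\widetilde{N\cap A}\subseteq\widetilde{N}\cap\widetilde{A}$, since $\widetilde{N}\cap\widetilde{A}=\mathsf{O}$ then forces $\widetilde{N\cap A}=\mathsf{O}$ and T$_{D}$ finishes; it is only the forward implication that genuinely needs complementedness.
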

\begin{proof}
	 $(\Longrightarrow)$: Follows from Lemma \ref{binaryintersection}.
	 
	$(\Longleftarrow)$: Follows from the fact that $B\subseteq C$ if and only if $\widetilde{B}\subseteq\widetilde{C}$, for all $B,C\subseteq X$ where $X$ is a T$_{D}$-space. See \cite{PP1}.
\end{proof}
	
\begin{lemma}\label{lembinaryintersection}
	Let $X$ be a T$_{D}$-space and $F\subseteq X$. Then $A\subseteq F$ is $F$-nowhere dense iff $\widetilde{A}$ is $\widetilde{F}$-nowhere dense.
\end{lemma}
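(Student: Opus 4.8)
The plan is to peel off a closure on each side, identify the sublocale induced by a closed subset of $F$ as a closed sublocale of $\widetilde{F}$, and then read off nowhere density from whether that sublocale's defining element is dense. Note first that $F$, being a subspace of a T$_{D}$-space, is again T$_{D}$, and that $\widetilde{A}\subseteq\widetilde{F}$ because $A\subseteq F$, so $\widetilde{A}$ really is a sublocale of the locale $\widetilde{F}$ and ``$\widetilde{F}$-nowhere dense'' is meaningful. I would apply \cite[Proposition~4.1]{BP1} inside the locale $\widetilde{F}$: $\widetilde{A}$ is $\widetilde{F}$-nowhere dense iff $\operatorname{int}_{\widetilde{F}}\!\big(\operatorname{cl}_{\widetilde{F}}(\widetilde{A})\big)=\mathsf{O}$. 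Using that closure in a sublocale is the trace of the ambient closure, that $\operatorname{cl}_{\mathfrak{O}X}(\widetilde{A})=\widetilde{\operatorname{cl}_{X}A}$, and Lemma~\ref{binaryintersection} (the closed sublocale $\widetilde{\operatorname{cl}_{X}A}$ is complemented), one gets
\[
\operatorname{cl}_{\widetilde{F}}(\widetilde{A})\;=\;\widetilde{F}\cap\widetilde{\operatorname{cl}_{X}A}\;=\;\widetilde{F\cap\operatorname{cl}_{X}A}\;=\;\widetilde{\operatorname{cl}_{F}A}.
\]
Write $C:=\operatorname{cl}_{F}A$, a closed subset of $F$.

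Next I would exhibit $\widetilde{C}$ as a closed sublocale of $\widetilde{F}$. Choose $U\in\mathfrak{O}X$ with $F\smallsetminus C=U\cap F$. Since $\widetilde{X\smallsetminus U}=\mathfrak{O}X\smallsetminus\widetilde{U}=\mathfrak{c}(U)$ is complemented, Lemma~\ref{binaryintersection} together with the identity $\mathfrak{c}_{S}(\nu_{S}(a))=S\cap\mathfrak{c}(a)$ gives
\[
\widetilde{C}\;=\;\widetilde{F\cap(X\smallsetminus U)}\;=\;\widetilde{F}\cap\mathfrak{c}(U)\;=\;\mathfrak{c}_{\widetilde{F}}\!\big(\nu_{\widetilde{F}}(U)\big),
\]
so $\widetilde{C}=\mathfrak{c}_{\widetilde{F}}(b)$ with $b:=\nu_{\widetilde{F}}(U)\in\widetilde{F}$. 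The interior of a closed sublocale is $\operatorname{int}_{\widetilde{F}}\!\big(\mathfrak{c}_{\widetilde{F}}(b)\big)=\mathfrak{o}_{\widetilde{F}}(b^{*\widetilde{F}})$, which one re-derives at once from $\mathfrak{o}_{\widetilde{F}}(a)\subseteq\mathfrak{c}_{\widetilde{F}}(b)\iff a\wedge b=0_{\widetilde{F}}$. Combining with the first paragraph, $\widetilde{A}$ is $\widetilde{F}$-nowhere dense iff $b^{*\widetilde{F}}=0_{\widetilde{F}}$, i.e.\ iff $b$ is dense in $\widetilde{F}$.

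Finally I would transport this across the canonical frame isomorphism $\widetilde{F}\cong\mathfrak{O}F$ given by restriction $V\mapsto V\cap F$, under which $b=\nu_{\widetilde{F}}(U)$ corresponds to $U\cap F=F\smallsetminus C$. Hence $b$ is dense in $\widetilde{F}$ iff $F\smallsetminus C$ is a dense open subset of $F$, i.e.\ $\operatorname{cl}_{F}(F\smallsetminus C)=F$, i.e.\ $\operatorname{int}_{F}(C)=\operatorname{int}_{F}(\operatorname{cl}_{F}A)=\emptyset$, i.e.\ $A$ is $F$-nowhere dense; this closes the chain of equivalences. The one point demanding care — the main obstacle — is keeping the two induced-sublocale worlds aligned, namely sublocales of $\mathfrak{O}X$ versus sublocales of $\widetilde{F}\cong\mathfrak{O}F$, when passing between $X$ and the subspace $F$; this is exactly what applying Lemma~\ref{binaryintersection} to the complemented closed sublocales $\widetilde{\operatorname{cl}_{X}A}$ and $\widetilde{X\smallsetminus U}$ handles, with the T$_{D}$ hypothesis (inherited by $F$) ensuring the subspace-to-sublocale passage stays faithful throughout.
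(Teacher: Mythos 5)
Your argument is correct, and it follows a genuinely different route from the paper's. The paper proves the two implications directly at the level of open sublocales: it takes an arbitrary open sublocale $\mathfrak{o}_{\widetilde{F}}(U)=\widetilde{U}\cap\widetilde{F}$ contained in $\overline{\widetilde{A}}^{\widetilde{F}}$, transfers the containment back and forth between $U\cap F\subseteq\overline{A}$ and $\widetilde{U}\cap\widetilde{F}\subseteq\widetilde{\overline{A}}$ using Lemma \ref{binaryintersection}, and kills the witness with Lemma \ref{N1}, leaning on the T$_{D}$ faithfulness of $B\subseteq C\Leftrightarrow\widetilde{B}\subseteq\widetilde{C}$ in both directions. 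You instead collapse both sides of the equivalence to a single dense-element criterion: after identifying $\operatorname{cl}_{\widetilde{F}}(\widetilde{A})$ with the closed sublocale $\mathfrak{c}_{\widetilde{F}}(\nu_{\widetilde{F}}(U))$ of $\widetilde{F}$, nowhere density on the localic side becomes density of $\nu_{\widetilde{F}}(U)$ in the frame $\widetilde{F}$, and the canonical isomorphism $\widetilde{F}\cong\mathfrak{O}F$ (which preserves pseudocomplements, hence density) turns this into density of the open set $F\smallsetminus\operatorname{cl}_{F}A$ in $F$, i.e.\ $\operatorname{int}_{F}(\operatorname{cl}_{F}A)=\emptyset$. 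Your approach is more structural and uses the T$_{D}$ hypothesis only once (for $\overline{\widetilde{A}}=\widetilde{\overline{A}}$), avoiding Lemma \ref{N1} entirely; the cost is invoking the isomorphism $\widetilde{F}\cong\mathfrak{O}F$, which the paper never makes explicit, though it is standard. The paper's proof stays entirely inside the induced-sublocale calculus it has already set up and is closer to the surface of the definitions. One small remark: the T$_{D}$-ness of the subspace $F$, which you mention as inherited, is not actually needed anywhere in your chain — all the subspace-level steps go through the complementedness arguments of Lemma \ref{binaryintersection} and the frame isomorphism, neither of which requires it.
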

\begin{proof}
	$(\Longrightarrow):$ Let $U\in \widetilde{F}$ be such that $\mathfrak{o}_{\widetilde{F}}(U)\subseteq \overline{\widetilde{A}}^{\widetilde{F}}$. Then $U\in\mathfrak{O}X$ and $\widetilde{U}\cap \widetilde{F}\subseteq\overline{\widetilde{A}}^{\widetilde{F}}=\overline{\widetilde{A}}\cap \widetilde{F}$ so that $\widetilde{U\cap F}\subseteq\widetilde{U}\cap\widetilde{F}\subseteq\widetilde{\overline{A}}$. Therefore $U\cap F\subseteq \overline{A}$ making $U\cap F\subseteq \overline{A}^{F}$. Since $A$ is $F$-nowhere dense, $U\cap F=\emptyset$. By Lemma \ref{N1}, $\widetilde{U}\cap\widetilde{F}=\mathsf{O}$ making $\Int_{\widetilde{F}}\left(\overline{\widetilde{A}}^{\widetilde{F}}\right)=\mathsf{O}$. Thus $\widetilde{A}$ is $\widetilde{F}$-nowhere dense.

	$(\Longleftarrow):$ Let $U\in\mathfrak{O}X$ be such that $U\cap F\subseteq \overline{A}^{F}=\overline{A}\cap F$. Then $U\cap F\subseteq \overline{A}$. Since $U$ is open, it follows from Lemma \ref{binaryintersection} that $\widetilde{U}\cap\widetilde{F}\subseteq\widetilde{\overline{A}}$ which gives $\widetilde{U}\cap\widetilde{F}\subseteq\widetilde{\overline{A}}\cap\widetilde{F}$. Because $\widetilde{A}$ is $\widetilde{F}$-nowhere dense, $\widetilde{U}\cap\widetilde{F}=\mathsf{O}$ implying that $U\cap F=\emptyset$. Therefore $A$ is $F$-nowhere dense.
\end{proof}We are now in a position to show that the notion of maximal nowhere density introduced in Definition \ref{mnddef} is conversative in locales. Recall that for a subset $N$ of a topological space $X$:
\begin{enumerate}
	\item $N$ is nowhere dense in $X$ if and only if $\widetilde{N}$ is nowhere dense in $\mathfrak{O}X$, and
	\item If $X$ is a T$_{D}$-space, then $\overline{\widetilde{N}}=\widetilde{\overline{N}}$.
\end{enumerate}

\begin{proposition}\label{mnd}
	Let $X$ be a T$ _{D} $-space. A subset $F$ of $X$ is maximal nowhere dense in $X$ iff $\widetilde{F}$ is maximal nowhere dense in $\mathfrak OX$.
\end{proposition}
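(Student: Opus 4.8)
The plan is to prove both implications by unwinding the definitions of maximal nowhere density (Definition~\ref{mnddef}) and transporting the key hypotheses across the correspondence $N \mapsto \widetilde{N}$, using the conservativity of nowhere density (item~(1) above), the identity $\overline{\widetilde{N}} = \widetilde{\overline{N}}$ for T$_D$-spaces (item~(2)), and, crucially, Lemma~\ref{lembinaryintersection} which tells us that $A \subseteq F$ is $F$-nowhere dense iff $\widetilde{A}$ is $\widetilde{F}$-nowhere dense. By Proposition~\ref{mndcmnd}, maximal nowhere density of a sublocale is equivalent to the nonexistence of a \emph{closed} nowhere dense sublocale containing it as a nowhere dense sublocale; combined with Lemma~\ref{mndcmndlemma}, this means I may restrict attention throughout to witnesses of the form $\mathfrak{c}(x)$, i.e.\ to closed subspaces $F' = \overline{K}$ of $X$ on the topological side. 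So the real content is: $F$ is \emph{not} maximal nowhere dense in $X$ iff there is a closed nowhere dense $K \subseteq X$ with $F$ nowhere dense in $K$, and likewise on the localic side with $\widetilde{F}$.

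For the forward direction ($F$ m.n.d.\ in $X$ $\Rightarrow$ $\widetilde{F}$ m.n.d.\ in $\mathfrak{O}X$): suppose $\widetilde{F}$ is not maximal nowhere dense. By Proposition~\ref{mndcmnd}(2), there is a closed nowhere dense sublocale $\mathfrak{c}(x)$ of $\mathfrak{O}X$ such that $\widetilde{F}$ is nowhere dense in $\mathfrak{c}(x)$. Since $\mathfrak{c}(x)$ is closed and $\mathfrak{O}X$ is spatial (with $X$ a T$_D$-space), $\mathfrak{c}(x) = \widetilde{C}$ for a unique closed subspace $C = X \smallsetminus U$ where $x$ corresponds to the open set $U$; as $\mathfrak{c}(x)$ is nowhere dense in $\mathfrak{O}X$, conservativity of nowhere density forces $C$ to be nowhere dense in $X$. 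Now $\widetilde{F}$ nowhere dense in $\widetilde{C}$ gives, by Lemma~\ref{lembinaryintersection} applied with the closed subspace $C$ in the role of $F$ and $F$ in the role of $A$ (after first noting $F \subseteq C$, which follows from $\widetilde{F} \subseteq \widetilde{C}$ and the order-embedding $A \mapsto \widetilde{A}$ valid for T$_D$-spaces), that $F$ is nowhere dense in $C$ --- contradicting maximality of $F$ in $X$. Hence $\widetilde{F}$ is maximal nowhere dense.

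The reverse direction is symmetric: if $F$ is not maximal nowhere dense in $X$, pick (using Proposition~\ref{mndcmnd}(2) on the space side, or just directly) a nowhere dense $K \subseteq X$ with $F$ nowhere dense in $K$; replacing $K$ by $\overline{K}$ via Lemma~\ref{mndcmndlemma} (together with its topological counterpart --- nowhere density in $K$ and in $\overline{K}$ agree, by the same density argument) we may take $K$ closed and still nowhere dense. Then $\widetilde{K}$ is closed in $\mathfrak{O}X$, it is nowhere dense there by conservativity, and Lemma~\ref{lembinaryintersection} gives $\widetilde{F}$ nowhere dense in $\widetilde{K}$, so $\widetilde{F}$ is not maximal nowhere dense in $\mathfrak{O}X$. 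Since $F$ is nowhere dense in $X$ iff $\widetilde{F}$ is nowhere dense in $\mathfrak{O}X$, the hypothesis ``$F$ nowhere dense'' transfers freely, completing the equivalence.

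The main obstacle I anticipate is the bookkeeping around the T$_D$-hypothesis: I must be careful that every step invoking ``$F \subseteq C$ iff $\widetilde{F} \subseteq \widetilde{C}$'', ``$\overline{\widetilde{F}} = \widetilde{\overline{F}}$'', or the reconstruction of a closed sublocale of $\mathfrak{O}X$ as $\widetilde{C}$ for a genuine closed subspace $C$, is legitimate --- these all rely on $X$ being T$_D$ (or on $\mathfrak{O}X$ being spatial), and Lemma~\ref{lembinaryintersection}'s hypotheses (complementedness of the relevant induced sublocale, openness of the test element) must be verified each time. Once the reduction to closed witnesses via Proposition~\ref{mndcmnd} and Lemma~\ref{mndcmndlemma} is in place, the transfer itself is essentially a direct application of Lemma~\ref{lembinaryintersection}, so no substantial new estimate is needed.
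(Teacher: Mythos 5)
Your proposal is correct and follows essentially the same route as the paper's proof: reduce to closed witnesses via Proposition \ref{mndcmnd}, identify closed sublocales of $\mathfrak{O}X$ with induced closed subspaces, and transfer relative nowhere density back and forth using Lemma \ref{lembinaryintersection} together with the conservativity of nowhere density. The only (harmless) cosmetic differences are that the paper passes to $\overline{\widetilde F}=\widetilde{\overline F}$ in the forward direction where you instead track the inclusion $F\subseteq C$, and that you take the closure of the topological witness in the reverse direction where the paper applies Lemma \ref{lembinaryintersection} to $K$ directly.
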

\begin{proof}
	$(\Longrightarrow)$: Let $F\subseteq X$ be such that $\widetilde{F}$ is not maximal nowhere dense in $\mathfrak OX$. It follows from Proposition \ref{mndcmnd} that the sublocale $\overline{\widetilde{F}}=\widetilde{\overline{F}}$ is not maximal nowhere dense in $\mathfrak{O}X$. Therefore, there is a closed sublocale $\mathcal{K}$ of $\mathfrak{O}X$ such that $\widetilde{\overline{F}}$ in nowhere dense in $\mathcal{K}$.  Because $\mathcal{S}$ is a closed sublocale of $\mathfrak OX$, choose a closed set $K\subseteq X$ such that $\mathcal{K}=\mathfrak{c}(X\smallsetminus K)=\widetilde{K}$. It follows from Lemma \ref{lembinaryintersection} that $\overline{F}$ is $K$-nowhere dense, making $F$ not maximal nowhere dense in $X$.

	$ (\Longleftarrow) $: If $F$ is not maximal nowhere dense in $X$, then $F$ is nowhere dense in some nowhere dense subset $K$ of $X$. It follows from Lemma \ref{lembinaryintersection} that $\widetilde{F}$ is nowhere dense in the nowhere dense sublocale $\widetilde{K}$ of $\mathfrak{O}X$. Thus $\widetilde{F}$ is not maximal nowhere dense in $\mathfrak{O}X$.
\end{proof}
	
	We close this section by considering some results about maximal nowhere dense sublocales.
	
	Recall from \cite{FPP} that for any sublocale $S$ of $L$, $$\Int(S)=\mathfrak{o}\left(\bigwedge (L\smallsetminus S)\right)=L\smallsetminus \mathfrak{c}\left(\bigwedge (L\smallsetminus S)\right)=L\smallsetminus \overline{L\smallsetminus S}.$$ This can be applied to any complemented sublocale $A$ of a locale $L$ and arbitrary $F\in\mathcal{S}(L)$ as follows:
	\begin{align*}
		\Int_{A}\left(\overline{F}\cap A\right) 
		&\quad =\quad
		A\smallsetminus \overline{(A\smallsetminus \left(\overline{F}\cap A\right))}^{A}   \\
		&\quad =\quad
		A\smallsetminus \left(\overline{A\smallsetminus \left(\overline{F}\cap A\right)}\cap A\right)  \\
		&\quad =\quad
		A\cap \left(L\smallsetminus \left(\overline{A\smallsetminus \left(\overline{F}\cap A\right)}\cap A\right) \right)  \\
		&\quad =\quad
		A\cap\left( \left(L\smallsetminus \left(\overline{A\smallsetminus \left(\overline{F}\cap A\right)}\right)\right)\vee (L\smallsetminus A) \right)  \\
		&\quad =\quad
		\left(A\cap\left(L\smallsetminus \left(\overline{A\smallsetminus \left(\overline{F}\cap A\right)}\right)\right)\right)\vee (A\cap (L\smallsetminus A))   \\
		&\quad =\quad
		A\cap\left(L\smallsetminus \left(\overline{A\smallsetminus \left(\overline{F}\cap A\right)}\right)\right) \\
		&\quad =\quad
		A\cap\left(L\smallsetminus \overline{\left(A\cap \left(L\smallsetminus \left(\overline{F}\cap A\right)\right)\right)}\right)  \\
		&\quad =\quad
		A\cap \left(L\smallsetminus \overline{\left(A\cap \left(L\smallsetminus \overline{F}\right)\right)\vee \left(A\cap\left(L\smallsetminus A\right)\right)}\right)  \\
		&\quad =\quad
		A\cap \left(L\smallsetminus \overline{\left(A\cap \left(L\smallsetminus \overline{F}\right)\right)}\right).
	\end{align*}
	
	\begin{obs}\label{obsndsubspace}
		From the preceding paragraph we get that a sublocale $F$ of a complemented sublocale $A$ of $L$ is $A$-nowhere dense if and only if $A\subseteq \overline{A\cap (L\smallsetminus \overline{F})}$. This follows since \begin{align*}
			F\text{ is nowhere dense in } A&\quad\Longleftrightarrow\quad\Int_{A}\left(\overline{F}^{A}\right)=\mathsf{O}\\
			&\quad\Longleftrightarrow\quad A\cap \left(L\smallsetminus \overline{\left(A\cap \left(L\smallsetminus \overline{F}\right)\right)}\right)=\mathsf{O}\\
			&\quad\Longleftrightarrow\quad  A\subseteq\overline{A\cap \left(L\smallsetminus \overline{F}\right)}.
		\end{align*}
	\end{obs}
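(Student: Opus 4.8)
The plan is to obtain the equivalence by reading it off the chain of identities produced in the paragraph immediately preceding the statement, together with the description of nowhere density via a void interior. First I would invoke \cite[Proposition 4.1.]{BP1}, applied inside the locale $A$: the sublocale $F$ of $A$ is $A$-nowhere dense exactly when its closure in $A$ has void interior in $A$, i.e.\ $\Int_{A}\left(\overline{F}^{A}\right)=\mathsf{O}$. Since closures taken inside a sublocale are just intersections with the ambient closure (as recorded by the formulas $\mathfrak{c}_{S}(\nu_{S}(a))=S\cap\mathfrak{c}(a)$ in the preliminaries), we have $\overline{F}^{A}=\overline{F}\cap A$ for $F\subseteq A$, so the condition becomes $\Int_{A}\left(\overline{F}\cap A\right)=\mathsf{O}$.

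Next I would substitute the identity
\[\Int_{A}\left(\overline{F}\cap A\right)=A\cap\left(L\smallsetminus\overline{A\cap\left(L\smallsetminus\overline{F}\right)}\right)\]
established in the displayed computation just above the statement. This is the step that genuinely uses the hypothesis that $A$ is complemented: the derivation of that identity relies on $A$ being linear — complemented sublocales are linear — which is what allows $A\cap(-)$ to be distributed across the relevant binary join and the $(L\smallsetminus A)$-summand to be discarded. Hence $F$ is $A$-nowhere dense if and only if $A\cap\left(L\smallsetminus\overline{A\cap\left(L\smallsetminus\overline{F}\right)}\right)=\mathsf{O}$.

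Finally I would pass from ``missing an open sublocale'' to ``being contained in the complementary closed sublocale'': for any sublocale $A$ of $L$ and any $b\in L$ one has $A\cap\mathfrak{o}(b)=\mathsf{O}$ if and only if $A\subseteq\mathfrak{c}(b)$, since both sides are equivalent to $b\leq\bigwedge A$. Applying this with $\mathfrak{c}(b)=\overline{A\cap\left(L\smallsetminus\overline{F}\right)}$, whose complementary open sublocale is precisely $L\smallsetminus\overline{A\cap\left(L\smallsetminus\overline{F}\right)}$, converts the condition of the previous paragraph into $A\subseteq\overline{A\cap\left(L\smallsetminus\overline{F}\right)}$, which is the asserted equivalence.

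I do not foresee any real obstacle here: the substance is carried entirely by the interior formula from \cite{FPP} (as rearranged in the preceding display) and by the elementary ``misses iff contained in the complement'' fact for closed and open sublocales. The only points needing attention are keeping straight which closure and interior are computed in $A$ versus in $L$, the identification $\overline{F}^{A}=\overline{F}\cap A$ for $F\subseteq A$, and noting that the complemented (indeed open) sublocale used in the last step is exactly $L\smallsetminus\overline{A\cap(L\smallsetminus\overline{F})}$.
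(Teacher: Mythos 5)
Your argument is correct and is essentially the paper's own: the observation is obtained exactly by combining \cite[Proposition 4.1.]{BP1} with the displayed identity $\Int_{A}\left(\overline{F}\cap A\right)=A\cap\left(L\smallsetminus\overline{A\cap\left(L\smallsetminus\overline{F}\right)}\right)$ from the preceding paragraph and the elementary fact that a sublocale misses an open sublocale iff it is contained in the complementary closed one. Your additional remarks on where complementedness (linearity) of $A$ enters and on distinguishing $\overline{F}^{A}$ from $\overline{F}$ are accurate and consistent with the paper.
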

	\begin{proposition}\label{mndprop}
		Let $L$ be a locale and $F$ a non-void nowhere dense sublocale of $L$. Then
		\begin{enumerate}
			\item If $A\in\NdS(L)$, $F$ is maximal nowhere dense in $L$ and $F\subseteq A$, then $A$ is maximal nowhere dense in $L$.
			\item If $F\cap (L\smallsetminus\overline{N})\neq\mathsf{O}$ for all  $N\in\NdS(L\smallsetminus F)$, then $F$ is maximal nowhere dense in $L$.
		\end{enumerate}
	\end{proposition}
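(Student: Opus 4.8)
The plan is to treat the two assertions separately, each by contradiction, leaning on Lemma~\ref{mndcmndlemma} to replace an arbitrary nowhere dense ``ambient'' sublocale by its closure, and on Observation~\ref{obsndsubspace} to turn ``nowhere dense in a closed sublocale'' into a statement about closures.

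For~(1) I would argue directly. Suppose $A$ is not maximal nowhere dense. Then there is a nowhere dense sublocale $S$ of $L$ in which $A$ is nowhere dense, so in particular $A\subseteq S$, and hence $F\subseteq A\subseteq S$. Since $F$ is a sublocale of the sublocale $A$ of $S$, and a sublocale of a nowhere dense sublocale is nowhere dense (the fact already used in the proof of Proposition~\ref{mndcmnd}: $F\subseteq A$ and $A$ misses $\mathfrak{B}(S)$ force $F$ to miss $\mathfrak{B}(S)$), $F$ is nowhere dense in $S$. As $S$ is itself nowhere dense in $L$, this contradicts the maximality of $F$. The only thing to be careful about here is the bookkeeping ``sublocale of a sublocale''.

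For~(2) I would prove the contrapositive: assuming $F$ is not maximal nowhere dense, I must exhibit some $N\in\NdS(L\smallsetminus F)$ with $F\cap(L\smallsetminus\overline{N})=\mathsf{O}$, that is, with $F\subseteq\overline{N}$. By hypothesis there is a nowhere dense $K$ in which $F$ is nowhere dense; by Lemma~\ref{mndcmndlemma} we may replace $K$ by $\overline{K}$, so $F$ is nowhere dense in the closed, nowhere dense sublocale $\overline{K}$, and $F\subseteq\overline{K}$. I propose $N:=\overline{K}\cap(L\smallsetminus\overline{F})$, and then check three things. First, $N$ is a sublocale of $L\smallsetminus F$: indeed $N\subseteq L\smallsetminus\overline{F}\subseteq L\smallsetminus F$, since $F\subseteq\overline{F}$ and supplements are antitone. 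Second, $F\subseteq\overline{N}$: because $\overline{K}$ is complemented and $F$ is $\overline{K}$-nowhere dense, Observation~\ref{obsndsubspace} gives $\overline{K}\subseteq\overline{\overline{K}\cap(L\smallsetminus\overline{F})}=\overline{N}$, and combining with the obvious $N\subseteq\overline{K}$ yields $\overline{N}=\overline{K}\supseteq F$, whence $F\cap(L\smallsetminus\overline{N})=\mathsf{O}$. Third, $N$ is nowhere dense in $L\smallsetminus F$: since $F$ is nowhere dense, $L\smallsetminus\overline{F}$ is dense (as used in Example~\ref{mndexample}(2)), hence so is the larger sublocale $L\smallsetminus F$, so $\mathfrak{B}(L\smallsetminus F)=\mathfrak{B}(L)$; and $N\subseteq\overline{K}$ is nowhere dense in $L$, so it misses $\mathfrak{B}(L)=\mathfrak{B}(L\smallsetminus F)$ and is therefore nowhere dense in $L\smallsetminus F$. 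This $N$ contradicts the hypothesis, so $F$ is maximal nowhere dense.

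The step I expect to carry the actual content is the third check: realising that the supplement $L\smallsetminus F$ of a nowhere dense sublocale is automatically dense, so that its Booleanization coincides with $\mathfrak{B}(L)$ and ``nowhere dense in $L$'' transfers verbatim to ``nowhere dense in $L\smallsetminus F$''. Everything else is formal once the candidate $N=\overline{K}\cap(L\smallsetminus\overline{F})$ is written down: the inclusions in the first two checks follow mechanically from the antitonicity of supplements, the fact that complements of closed sublocales are their supplements, and a single application of Observation~\ref{obsndsubspace}.
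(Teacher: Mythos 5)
Your proof is correct and follows essentially the same route as the paper: part (1) is the paper's argument verbatim, and for part (2) you build, from a closed nowhere dense $\overline{K}$ containing $F$, the same kind of witness in $\NdS(L\smallsetminus F)$ (the paper uses $\overline{K}\cap(L\smallsetminus F)$, you use $\overline{K}\cap(L\smallsetminus\overline{F})$, and both have closure $\overline{K}$) to contradict the hypothesis. Your verification that the witness is nowhere dense in $L\smallsetminus F$ via $\mathfrak{B}(L\smallsetminus F)=\mathfrak{B}L$ is a clean shortcut compared with the paper's check on open sublocales, but it is the same idea already used in Lemma~\ref{mndcmndlemma}.
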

	\begin{proof}
		(1) If $A\in\NdS(L)$, $F$ is maximal nowhere dense in $L$, $F\subseteq A$ and $N\in\NdS(L)$ such that $A\in\NdS(N)$, then $F\in\NdS(N)$, which is a contradiction. Thus $A$ is maximal nowhere dense in $L$.

		(2) We prove this statement by contradiction: Suppose that $F$ is not maximal nowhere dense, i.e., there is a nowhere dense sublocale $\mathfrak{c}(x)$ such that $F\in\NdS(\mathfrak{c}(x))$. We get that $\mathfrak{c}(x)\cap \left(L\smallsetminus\overline{\mathfrak{c}(x)\cap(L\smallsetminus F)}\right)= \mathsf{O}$. 
		
		\underline{Claim: $\mathfrak{c}(x)\cap(L\smallsetminus F)\in\NdS(L\smallsetminus F)$}. To verify this, assume that $\mathfrak{c}(x)\cap(L\smallsetminus F)$ is not $(L\smallsetminus F)$-nowhere dense. Then there is a non-void $(L\smallsetminus F)$-open sublocale $S$ such that $S\subseteq \overline{(L\smallsetminus F)\cap\mathfrak{c}(x)}$. Such $S$ is of the form $S=\mathfrak{o}(a)\cap(L\smallsetminus F)$ for some $a\in L$. Therefore  $$\mathfrak{o}(a)\cap (L\smallsetminus\overline{F})\subseteq\mathfrak{o}(a)\cap (L\smallsetminus F)\subseteq \overline{(L\smallsetminus F)\cap\mathfrak{c}(x)} \subseteq \overline{(L\smallsetminus F)}\cap\overline{\mathfrak{c}(x)}\subseteq\mathfrak{c}(x).$$ But $\mathfrak{c}(x)\in\NdS(L)$ and $\mathfrak{o}(a)\cap(L\smallsetminus\overline{F})$ is open in $L$, so $\mathfrak{c}(a)\cap(L\smallsetminus\overline{F})=\mathsf{O}$. Therefore $\mathfrak{o}(a)\subseteq \overline{F}$. Since $\overline{F}$ is nowhere dense in $L$, $\mathfrak{o}(a)=\mathsf{O}$ making $\mathsf{O}=S=\mathfrak{o}(a)\cap (L\smallsetminus F)$, which is impossible.  Thus $\mathfrak{c}(x)\cap(L\smallsetminus F)$ is $(L\smallsetminus F)$-nowhere dense.
		
		Now, by hypothesis, $F\cap (L\smallsetminus\overline{\mathfrak{c}(x)\cap (L\smallsetminus F)})\neq \mathsf{O}$. Since $F\subseteq\mathfrak{c}(x)$, $\mathfrak{c}(x)\cap (L\smallsetminus\overline{(L\smallsetminus F)\cap\mathfrak{c}(x)})\neq \mathsf{O}$ which contradicts that $\mathfrak{c}(x)\cap \left(L\smallsetminus\overline{\mathfrak{c}(x)\cap(L\smallsetminus F)}\right)= \mathsf{O}$. Thus $F$ is maximal nowhere dense in $L$.
	\end{proof}
	\begin{obs}
		Using the fact that non-empty finite joins of nowhere dense sublocales are nowhere dense, Proposition \ref{mndprop}(1) tells us that any finite join of maximal nowhere dense sublocales is maximal nowhere dense.
	\end{obs}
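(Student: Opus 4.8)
The plan is to derive the statement straight from Proposition~\ref{mndprop}(1), exactly as the wording of the observation indicates; the only real work is verifying that the hypotheses of that proposition are in place. So let $F_{1},\dots,F_{n}$ (with $n\geq 1$) be maximal nowhere dense sublocales of $L$ and put $F=F_{1}\vee\cdots\vee F_{n}$; the goal is to show that $F$ is maximal nowhere dense.

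First I would note that each $F_{i}$ is non-void, since by Example~\ref{mndexample}(2) the sublocale $\mathsf{O}$ is not maximal nowhere dense. Hence $\mathsf{O}\neq F_{1}\subseteq F$, so $F$ is a non-empty finite join of nowhere dense sublocales and is therefore nowhere dense; concretely, one passes to closures via $\overline{F}=\overline{F_{1}}\vee\cdots\vee\overline{F_{n}}$, writes each $\overline{F_{i}}=\mathfrak{c}(a_{i})$ with $a_{i}$ dense, observes $\overline{F}=\mathfrak{c}(a_{1}\wedge\cdots\wedge a_{n})$, and uses that finite meets of dense elements are dense in any frame together with the fact that a closed sublocale is nowhere dense precisely when it is induced by a dense element. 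Thus $F\in\NdS(L)$.

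It remains to apply Proposition~\ref{mndprop}(1) with its hypothesis sublocale ``$F$'' taken to be our $F_{1}$ and its ``$A$'' taken to be our $F$: we have $F\in\NdS(L)$, $F_{1}$ non-void and maximal nowhere dense in $L$, and $F_{1}\subseteq F$; hence $F$ is maximal nowhere dense in $L$, as required. A one-line induction on $n$ is an alternative packaging of the same idea (the join of two m.n.d.\ sublocales is m.n.d.\ by the same appeal to Proposition~\ref{mndprop}(1)), but the direct argument above is already complete.

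I do not expect any genuine obstacle here: the substance is entirely contained in Proposition~\ref{mndprop}(1) and in the structural fact that a finite join of nowhere dense sublocales is nowhere dense. The two points that require a moment's care are that the join must be non-empty (the empty join is $\mathsf{O}$, which is never maximal nowhere dense by Example~\ref{mndexample}(2)), and that one must establish $F\in\NdS(L)$ before Proposition~\ref{mndprop}(1) becomes applicable --- which is exactly the purpose of the closure-of-a-finite-join computation above.
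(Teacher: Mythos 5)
Your argument is correct and is precisely the reasoning the paper intends: the non-empty finite join is nowhere dense, it contains one of the (non-void, maximal nowhere dense) joinands, and Proposition \ref{mndprop}(1) then applies. The extra details you supply (non-voidness via Example \ref{mndexample}(2) and the closure computation showing the join is nowhere dense) are accurate fillings-in of facts the paper takes as known.
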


	As far as maximality of objects with some property P is concerned, one expects it to refer to an object $A$ with property P in which no other object with property P, other than $A$ itself, contains $A$. So, we spare some time to show that a nowhere dense sublocale with this maximality exists precisely when a locale has a largest nowhere dense sublocale. 
	
	Let us call a nowhere dense sublocale $N$ of a locale $L$ \textit{strongly maximal nowhere dense} if, for any nowhere dense sublocale $A$, $N\subseteq A$ implies $A=N$.

	\begin{proposition}
		Let $L$ be a locale. The following statements are equivalent.
		\begin{enumerate}
			\item $L$ has a strongly maximal nowhere dense sublocale.
			\item $\Nd(L)$ is nowhere dense.
		\end{enumerate}  	
			 If $L$ is non-Boolean, this is further equivalent to:
			\begin{enumerate}
				\item[(3)] $\Nd(L)$ is maximal nowhere dense.	
			\end{enumerate} 
	\end{proposition}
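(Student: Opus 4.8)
The plan is to prove the first equivalence as a short cycle $(2)\Rightarrow(1)\Rightarrow(2)$, and then, under the non-Boolean hypothesis, establish $(2)\Leftrightarrow(3)$. Throughout I would use only two already-available facts: that a non-empty finite join of nowhere dense sublocales is nowhere dense, and that $\Nd(L)$, being the join of \emph{all} nowhere dense sublocales of $L$, contains every nowhere dense sublocale of $L$.

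For $(2)\Rightarrow(1)$: if $\Nd(L)$ is nowhere dense, then it is itself a nowhere dense sublocale, and whenever $A$ is a nowhere dense sublocale with $\Nd(L)\subseteq A$ we also have $A\subseteq\Nd(L)$ by the defining property of $\Nd(L)$, so $A=\Nd(L)$; thus $\Nd(L)$ is strongly maximal nowhere dense. For $(1)\Rightarrow(2)$: let $N$ be a strongly maximal nowhere dense sublocale and let $A$ be an arbitrary nowhere dense sublocale. Either $A\vee N=\mathsf{O}$, in which case $A\subseteq N$ trivially, or $A\vee N$ is a non-empty finite join of nowhere dense sublocales, hence nowhere dense; since $N\subseteq A\vee N$, strong maximality forces $A\vee N=N$, i.e. $A\subseteq N$. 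As $A$ was arbitrary, $\Nd(L)\subseteq N$, and the reverse inclusion is automatic, so $\Nd(L)=N$ is nowhere dense.

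For the non-Boolean part, $(3)\Rightarrow(2)$ is immediate from Definition~\ref{mnddef}, since a maximal nowhere dense sublocale is by definition nowhere dense. For $(2)\Rightarrow(3)$ I would reproduce the argument already used in Example~\ref{mndexample}(1): assuming $\Nd(L)$ nowhere dense but not maximal nowhere dense, there is a nowhere dense sublocale $S$ in which $\Nd(L)$ is nowhere dense; this requires $\Nd(L)\subseteq S$, while $S\subseteq\Nd(L)$ always holds, so $S=\Nd(L)$ and $\Nd(L)$ is nowhere dense as a sublocale of itself. A locale is nowhere dense as a sublocale of itself only when it equals $\{1\}=\mathsf{O}$, so $\Nd(L)=\mathsf{O}$. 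But $L\smallsetminus\mathfrak{B}L$ is always a nowhere dense sublocale, so $L\smallsetminus\mathfrak{B}L\subseteq\Nd(L)=\mathsf{O}$, forcing $\mathfrak{B}L=L$, i.e. $L$ Boolean, contrary to hypothesis; hence $\Nd(L)$ is maximal nowhere dense.

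The one step needing a little care — the closest thing here to an obstacle — is the implication $\Nd(L)=\mathsf{O}\Rightarrow L$ Boolean, which rests on identifying $L\smallsetminus\mathfrak{B}L$ as a nowhere dense sublocale and on the remark that a sublocale, viewed as a locale, is nowhere dense in itself exactly when it is $\mathsf{O}$. Everything else is routine bookkeeping with the inclusion $A\subseteq\Nd(L)$ and with finite joins of nowhere dense sublocales.
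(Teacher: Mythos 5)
Your treatment of $(1)\Leftrightarrow(2)$ is correct and is essentially the paper's own argument: the paper proves $(1)\Rightarrow(2)$ by the same device ($A\vee N$ is nowhere dense, contains the strongly maximal $N$, hence equals $N$, so every nowhere dense $A$ lies in $N$ and $\Nd(L)\subseteq N$), and $(2)\Rightarrow(1)$ is the same one-liner. The implication $(3)\Rightarrow(2)$ and the first half of your $(2)\Rightarrow(3)$ (if $\Nd(L)$ is nowhere dense in some nowhere dense $S$ then $\Nd(L)\subseteq S\subseteq\Nd(L)$, so $\Nd(L)$ is nowhere dense in itself and therefore $\mathsf{O}$) are also fine and match the reasoning of Example~\ref{mndexample}(1).

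The gap is exactly the step you flagged: the claim that $L\smallsetminus\mathfrak{B}L$ is \emph{always} nowhere dense is false. If it were, then $(L\smallsetminus\mathfrak{B}L)\cap\mathfrak{B}L=\mathsf{O}$ together with $(L\smallsetminus\mathfrak{B}L)\vee\mathfrak{B}L=L$ would make $\mathfrak{B}L$ complemented in $\mathcal{S}(L)$, which fails already for $L=\mathfrak{O}\mathbb{R}$. Concretely, each $\widetilde{\{x\}}$ is nowhere dense, and since $\mathcal{S}(L)$ is a coframe (hence distributive) and $\mathfrak{B}L\vee(L\smallsetminus\mathfrak{B}L)=L$, every nowhere dense $N$ satisfies $N=(N\cap\mathfrak{B}L)\vee\left(N\cap(L\smallsetminus\mathfrak{B}L)\right)=N\cap(L\smallsetminus\mathfrak{B}L)$, i.e. $N\subseteq L\smallsetminus\mathfrak{B}L$; therefore $L\smallsetminus\mathfrak{B}L\supseteq\bigvee_{x\in\mathbb{R}}\widetilde{\{x\}}=\widetilde{\mathbb{R}}=L$, which is dense, not nowhere dense. (The paper's own hedge, ``$\Nd(L)=L\smallsetminus\mathfrak{B}L$ \emph{whenever} $\Nd(L)$ is nowhere dense,'' is a warning sign that this supplement is not nowhere dense in general.) The fact you actually need --- that $\Nd(L)=\mathsf{O}$ forces $L$ Boolean --- is true, but should be justified differently: if $L$ is non-Boolean, pick a non-complemented $a\in L$; then $(a\vee a^{*})^{*}=a^{*}\wedge a^{**}=0$, so $a\vee a^{*}$ is dense and $\neq 1$, making $\mathfrak{c}(a\vee a^{*})$ a non-void nowhere dense sublocale and $\Nd(L)\neq\mathsf{O}$. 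This is precisely the fact the paper invokes when it disposes of $(2)\Leftrightarrow(3)$ by noting that $\Nd(L)\neq\mathsf{O}$ for non-Boolean $L$. With that substitution your argument closes.
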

	\begin{proof}
		$(1)\Longrightarrow(2)$: Let $A\in\mathcal{S}(L)$ be strongly maximal nowhere dense. We show that $\Nd(L)\subseteq A$ which will make $\Nd(L)$ nowhere dense. Choose a nowhere dense $N\in\mathcal{S}(L)$. Then $N\vee A$ is nowhere dense in $L$. But $A\subseteq N\vee A$ and $A$ is strongly maximal nowhere dense, so $A=N\vee A$. Therefore $N\subseteq N\vee A=A$. Since $N$ was arbitrary, $\Nd(L)=\bigvee\{S\in\mathcal{S}(L):S\in\NdS(L)\}\subseteq A$. Thus $\Nd(L)\subseteq A$ implying that $\Nd(L)$ is nowhere dense.

$(2)\Longrightarrow(1)$: If $\Nd(L)$ is nowhere dense, then there is no other nowhere dense sublocale containing $\Nd(L)$ other than itself. Thus $\Nd(L)$ is a strongly maximal nowhere dense sublocale of $L$. 

Assume that $L$ is non-Boolean. The equivalence $(2)\Longleftrightarrow(3)$ follows since $\Nd(L)$ contains every nowhere dense sublocale of $L$ and $\Nd(L)\neq\mathsf{O}$.
	\end{proof}

	
	\section{Homogeneous maximal nowhere dense sublocales}
	
	Related to maximal nowhere dense subsets are homogeneous maximal nowhere dense subsets which were defined for spaces by Veksler in \cite{V} as closed nowhere dense subsets $F$ of a Tychonoff space $X$ in which each non-empty $F$-regular-closed subset is maximal nowhere dense in $X$. In this paper, we do not only focus on Tychonoff spaces, but all topological spaces.

	We extend Veksler's definition of a homogeneous maximal nowhere dense subset of any topological space to locales as follows.
	
	\begin{definition}\label{hmnd}
		A closed nowhere dense sublocale $N$ of a locale $L$ is \textit{homogeneous maximal nowhere dense (h.m.n.d)} if each non-void $N$-regular-closed sublocale is maximal nowhere dense in $L$.
	\end{definition}
	Without the closedness requirement in Definition \ref{hmnd}, we shall call $F$ \textit{weakly homogeneous maximal nowhere dense}.
	
	
\begin{obs}\label{regularclosed}
	We note that regular-closed sublocales of a closed nowhere dense sublocale $\mathfrak{c}(x)$ of $L$ are of the form $\mathfrak{c}(a\rightarrow x)$ for some $a\in L$. Indeed, $A$ is $\mathfrak{c}(x)$-regular-closed if and only if $A=\overline{\mathfrak{o}(a)\cap \mathfrak{c}(x)}\cap \mathfrak{c}(x)$ for some $a\in L$. We have \begin{align*}
		\overline{\mathfrak{o}(a)\cap \mathfrak{c}(x)}\cap\mathfrak{c}(x)&\quad=\quad\mathfrak{c}\left(\bigwedge\left(\mathfrak{o}(a)\cap\mathfrak{c}(x)\right)\right)\cap\mathfrak{c}(x)\\
		&\quad=\quad\mathfrak{c}\left(\bigwedge\left(\mathfrak{c}_{\mathfrak{o}(a)}(\nu_{\mathfrak{o}(a)}(x))\right)\right)\cap\mathfrak{c}(x)\\
		&\quad=\quad\mathfrak{c}\left(\nu_{\mathfrak{o}(a)}(x)\right)\cap\mathfrak{c}(x)\\
		&\quad=\quad\mathfrak{c}\left(a\rightarrow x\right)\cap\mathfrak{c}(x)\\
		&\quad=\quad\mathfrak{c}\left(a\rightarrow x\right)\quad\text{since }x\leq a\rightarrow x.
	\end{align*}
\end{obs}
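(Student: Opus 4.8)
The plan is to argue purely from the description of open and closed sublocales of a sublocale in terms of nuclei, as recalled in the Preliminaries. First I would recall that a sublocale $A\subseteq\mathfrak{c}(x)$ is $\mathfrak{c}(x)$-regular-closed precisely when $A=\overline{U}^{\mathfrak{c}(x)}$ for some open sublocale $U$ of $\mathfrak{c}(x)$, and that the open sublocales of $\mathfrak{c}(x)$ are exactly the $\mathfrak{o}_{\mathfrak{c}(x)}(\nu_{\mathfrak{c}(x)}(a))=\mathfrak{c}(x)\cap\mathfrak{o}(a)$ as $a$ ranges over $L$. Combining this with the identity $\overline{S}^{T}=\overline{S}\cap T$ for $S\subseteq T$ (used already in Lemma~\ref{lembinaryintersection}), the claim reduces to showing that $\overline{\mathfrak{o}(a)\cap\mathfrak{c}(x)}\cap\mathfrak{c}(x)=\mathfrak{c}(a\rightarrow x)$ for each $a\in L$, which is the content of the displayed computation.

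The heart of the matter is to compute the bottom element of the sublocale $\mathfrak{o}(a)\cap\mathfrak{c}(x)$, since $\overline{S}=\mathfrak{c}\bigl(\bigwedge S\bigr)$ for every $S\in\mathcal{S}(L)$. Here I would exploit the fact that $\mathfrak{o}(a)\cap\mathfrak{c}(x)$ is also the closed sublocale $\mathfrak{c}_{\mathfrak{o}(a)}(\nu_{\mathfrak{o}(a)}(x))$ of the open sublocale $\mathfrak{o}(a)$, together with the formula $\nu_{\mathfrak{o}(a)}(y)=a\rightarrow y$ for the nucleus of an open sublocale; hence the least element of $\mathfrak{c}_{\mathfrak{o}(a)}(a\rightarrow x)$ is $a\rightarrow x$. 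Because infima in a sublocale coincide with those in $L$, this gives $\bigwedge\bigl(\mathfrak{o}(a)\cap\mathfrak{c}(x)\bigr)=a\rightarrow x$ and therefore $\overline{\mathfrak{o}(a)\cap\mathfrak{c}(x)}=\mathfrak{c}(a\rightarrow x)$.

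Finally, intersecting with $\mathfrak{c}(x)$ and using $\mathfrak{c}(u)\cap\mathfrak{c}(v)=\mathfrak{c}(u\vee v)$ together with $x\leq a\rightarrow x$ yields $\mathfrak{c}(a\rightarrow x)\cap\mathfrak{c}(x)=\mathfrak{c}\bigl((a\rightarrow x)\vee x\bigr)=\mathfrak{c}(a\rightarrow x)$, which establishes that every $\mathfrak{c}(x)$-regular-closed sublocale has the asserted form. For the converse it suffices to observe that every sublocale of the form $\mathfrak{c}(a\rightarrow x)$ arises in this way, namely as the $\mathfrak{c}(x)$-closure of the open sublocale $\mathfrak{o}(a)\cap\mathfrak{c}(x)$ of $\mathfrak{c}(x)$, so the two families coincide. (Note that the nowhere density of $\mathfrak{c}(x)$ plays no role in this computation; it is simply the case of interest for the discussion of homogeneous maximal nowhere density.)

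The only genuine obstacle is the middle step: one must correctly recognize $\mathfrak{o}(a)\cap\mathfrak{c}(x)$ as a \emph{closed} sublocale of $\mathfrak{o}(a)$ and recall that the nucleus of the open sublocale $\mathfrak{o}(a)$ is $y\mapsto a\rightarrow y$, so that its bottom element is $a\rightarrow x$. Everything else is routine bookkeeping with the identities $\overline{S}=\mathfrak{c}\bigl(\bigwedge S\bigr)$, $\mathfrak{c}_{S}(\nu_{S}(a))=S\cap\mathfrak{c}(a)$ and $\mathfrak{o}_{S}(\nu_{S}(a))=S\cap\mathfrak{o}(a)$ recorded in Section~\ref{sect1}.
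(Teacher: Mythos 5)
Your proposal is correct and follows essentially the same route as the paper: identify the open sublocales of $\mathfrak{c}(x)$ as the $\mathfrak{o}(a)\cap\mathfrak{c}(x)$, view this intersection as the closed sublocale $\mathfrak{c}_{\mathfrak{o}(a)}(\nu_{\mathfrak{o}(a)}(x))$ of $\mathfrak{o}(a)$ whose bottom is $\nu_{\mathfrak{o}(a)}(x)=a\rightarrow x$, and conclude via $\overline{S}=\mathfrak{c}(\bigwedge S)$ and $x\leq a\rightarrow x$. Your added remarks (the converse direction and the irrelevance of nowhere density) are accurate but not needed beyond what the paper's computation already establishes.
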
In light of Observation \ref{regularclosed} and the characterizations of maximal nowhere nowhere dense sublocales given in Proposition \ref{mndcmnd}, we get the following characterizations of homogeneous maximal nowhere dense sublocales. The proof is straight forward and shall be omitted.
	\begin{proposition}\label{hmndcharac}
		Let $L$ be a locale and $x\in L$. The following statements are equivalent.
		\begin{enumerate}
			\item $\mathfrak{c}(x)$ is homogeneous maximal nowhere dense.
			\item For each $a\in L$, $\mathfrak{c}(a\rightarrow x)$ is maximal nowhere dense in $L$.
			\item For each $a\in L$, there is no dense $y\in L$ such that $y\leq a\rightarrow x$ and $(a\rightarrow x)^{*\mathfrak{c}(y)}=y$.
		\end{enumerate}
	\end{proposition}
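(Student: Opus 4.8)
The plan is to obtain both equivalences mechanically, by feeding the definition of homogeneous maximal nowhere density through two results already established: the description of the regular-closed sublocales of a closed nowhere dense sublocale in Observation~\ref{regularclosed}, and the characterisation of maximal nowhere density in Proposition~\ref{mndcmnd}. Throughout I read $\mathfrak{c}(x)$ as a non-void (closed) nowhere dense sublocale, i.e. $x$ dense and $x\neq 1$; this is implicit in the statement, since it is part of Definition~\ref{hmnd} and of being maximal nowhere dense. No genuinely new idea is required.

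For $(1)\Leftrightarrow(2)$: by Definition~\ref{hmnd}, $\mathfrak{c}(x)$ is h.m.n.d.\ exactly when every non-void $\mathfrak{c}(x)$-regular-closed sublocale is maximal nowhere dense in $L$. Observation~\ref{regularclosed} identifies the $\mathfrak{c}(x)$-regular-closed sublocales as precisely the $\mathfrak{c}(a\rightarrow x)$ with $a\in L$, and among these the non-void ones are exactly those with $a\not\leq x$ (since $a\rightarrow x=1$ iff $a\leq x$); moreover $a=1$ recovers $\mathfrak{c}(1\rightarrow x)=\mathfrak{c}(x)$. So (1) says precisely that $\mathfrak{c}(a\rightarrow x)$ is maximal nowhere dense for every $a\in L$ with $a\not\leq x$, which, once the trivial cases $a\leq x$ (where $\mathfrak{c}(a\rightarrow x)=\mathsf{O}$) are understood to be excluded, is statement (2).

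For $(2)\Leftrightarrow(3)$: fix $a\in L$ and set $N=\mathfrak{c}(a\rightarrow x)=\{z\in L:a\rightarrow x\leq z\}$, so that $\bigwedge N=a\rightarrow x$; moreover $N$ is nowhere dense because $a\rightarrow x\geq x$ and the pseudocomplement is order-reversing, whence $(a\rightarrow x)^{*}\leq x^{*}=0$. By the equivalence $(1)\Leftrightarrow(5)$ of Proposition~\ref{mndcmnd} applied to the nowhere dense sublocale $N$, $\mathfrak{c}(a\rightarrow x)$ is maximal nowhere dense iff there is no dense $y\in L$ with $y\leq\bigwedge N=a\rightarrow x$ and $(\bigwedge N)^{*\mathfrak{c}(y)}=(a\rightarrow x)^{*\mathfrak{c}(y)}=y$. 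Quantifying this equivalence over all $a\in L$ turns (2) into (3). I do not expect a real obstacle here: the only point needing care is the bookkeeping of the degenerate cases $a\leq x$ and $x=1$, together with keeping the standing density assumption on $x$ in force so that Proposition~\ref{mndcmnd} is applicable to each $\mathfrak{c}(a\rightarrow x)$ --- which is exactly why the paper can dismiss the proof as straightforward.
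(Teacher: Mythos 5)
Your proof is correct and takes essentially the same route the paper intends: the paper omits the proof, saying only that it follows from Observation \ref{regularclosed} together with Proposition \ref{mndcmnd}, which is precisely the reduction you carry out. Your explicit attention to the degenerate case $a\leq x$ (where $\mathfrak{c}(a\rightarrow x)=\mathsf{O}$, which is not maximal nowhere dense) addresses a quantifier issue the paper glosses over, and your reading is the intended one.
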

	
	In the next result, we show that the definition of a homogeneous maximal nowhere dense sublocale given in Definition \ref{hmnd} is conservative in locales. Prior to that, we give the following lemmas where the proof for Lemma \ref{Knd}(1) is a straightforward application of Lemma \ref{binaryintersection} and Lemma \ref{lembinaryintersection}, and shall be omitted. We mentioned in the Preliminaries section that we shall sometimes use the notation $S_{A}$ instead of $\widetilde{A}$ for the sublocale of $\mathfrak{O}X$ induced by the subset $A$ of a topological space $X$.
	
\begin{lemma}\label{Knd}
	Let $X$ be a T$_{D}$-space and $A,K,F, U\subseteq X$ be such that $\widetilde{F}$ is complemented in $\mathfrak{O}X$. Then:
	\begin{enumerate}
		\item If $U\cap F\subseteq K$,  then $U\cap F$ is $K$-nowhere dense iff $\widetilde{U}\cap\widetilde{F}$ is $\widetilde{K}$-nowhere dense.
		\item $A=\overline{F\cap U}\cap F$ if and only if $\widetilde{A}=\overline{\widetilde{F}\cap\widetilde{U}}\cap\widetilde{F}$.
	\end{enumerate} 
\end{lemma}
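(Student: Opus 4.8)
The plan is to reduce both parts to the behaviour of the assignment $A\mapsto\widetilde A$ on a $T_D$-space: it is an order-embedding (hence injective), it converts the closure of a subset into the closure of the induced sublocale (the recalled identity $\overline{\widetilde N}=\widetilde{\overline N}$), and, by Lemma \ref{binaryintersection}, it carries a binary intersection $B\cap C$ to $\widetilde B\cap\widetilde C$ whenever one of $\widetilde B,\widetilde C$ is complemented. Everything in the lemma is built from these three facts together with Lemma \ref{lembinaryintersection}.

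For part (1): since $\widetilde F$ is complemented, Lemma \ref{binaryintersection} gives $\widetilde{U\cap F}=\widetilde U\cap\widetilde F$, so the right-hand assertion is literally ``$\widetilde{U\cap F}$ is $\widetilde K$-nowhere dense''. Under the standing hypothesis $U\cap F\subseteq K$, Lemma \ref{lembinaryintersection} applied with ambient subspace $K$ and subset $U\cap F$ states exactly that $U\cap F$ is $K$-nowhere dense iff $\widetilde{U\cap F}$ is $\widetilde K$-nowhere dense. Chaining the two equivalences proves (1); this is the argument the paper leaves to the reader. For part (2), I would first evaluate the right-hand side of the claimed equality. Using complementedness of $\widetilde F$ and Lemma \ref{binaryintersection}, $\widetilde F\cap\widetilde U=\widetilde{F\cap U}$, hence $\overline{\widetilde F\cap\widetilde U}=\overline{\widetilde{F\cap U}}$; and since $X$ is $T_D$, $\overline{\widetilde{F\cap U}}=\widetilde{\overline{F\cap U}}$. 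Intersecting with $\widetilde F$ and invoking Lemma \ref{binaryintersection} a second time (again because $\widetilde F$ is complemented) gives
$$\overline{\widetilde F\cap\widetilde U}\cap\widetilde F=\widetilde{\overline{F\cap U}}\cap\widetilde F=\widetilde{\overline{F\cap U}\cap F}.$$
Thus the equation $\widetilde A=\overline{\widetilde F\cap\widetilde U}\cap\widetilde F$ is the same as $\widetilde A=\widetilde{\overline{F\cap U}\cap F}$, and because $B\mapsto\widetilde B$ is an order-embedding on a $T_D$-space, $\widetilde B=\widetilde C$ forces $B=C$; so this holds precisely when $A=\overline{F\cap U}\cap F$. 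Both directions of (2) follow immediately.

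The only point requiring care is hygiene about hypotheses: the two passages of a binary intersection through $\widetilde{\cdot}$ use complementedness of $\widetilde F$, while the closure identity $\overline{\widetilde N}=\widetilde{\overline N}$ and the injectivity of $B\mapsto\widetilde B$ use the $T_D$ assumption. There is no substantive obstacle beyond this bookkeeping, since each step is a direct appeal either to an earlier lemma or to the facts about $T_D$-spaces recalled before the statement.
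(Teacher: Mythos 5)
Your proof is correct and follows essentially the same route as the paper: part (1) is exactly the chaining of Lemma \ref{binaryintersection} with Lemma \ref{lembinaryintersection} that the paper declares straightforward and omits, and part (2) is the paper's own chain of equivalences (complementedness of $\widetilde{F}$ twice, the T$_{D}$ closure identity, and injectivity of $B\mapsto\widetilde{B}$), merely traversed from the right-hand side instead of the left. No gaps.
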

\begin{proof}
	(2) We have that \begin{align*}
		A=\overline{F\cap U}\cap F &\quad\Longleftrightarrow \quad \widetilde{A}=S_{\overline{F\cap U}\cap F}\\
		&\quad\Longleftrightarrow \quad \widetilde{A}=\widetilde{\overline{F\cap U}}\cap\widetilde{F}\text{ since }\widetilde{F} \text{ is complemented in }\mathfrak{O}X\\
		&\quad\Longleftrightarrow \quad \widetilde{A}=\overline{\widetilde{F\cap U}}\cap\widetilde{F}\\
		&\quad\Longleftrightarrow \quad \widetilde{A}=\overline{\widetilde{F}\cap\widetilde{U}}\cap\widetilde{F}\text{ since }\widetilde{F} \text{ is complemented in }\mathfrak{O}X
	\end{align*}which proves the result.
\end{proof}
	Using Lemma \ref{binaryintersection}(2), we get the following result, whose proof shall be omitted.
 	\begin{corollary}\label{regularclosedF}
		Let $X$ be a T$_{D}$-space and $F\subseteq X$. Then a subset $A\subseteq F$ is $F$-regular-closed iff $\widetilde{A}$ is $\widetilde{F}$-regular-closed.
	\end{corollary}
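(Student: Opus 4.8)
The plan is to reduce the statement to Lemma \ref{Knd}(2) by way of the standard reformulation of regular-closedness as ``closure of an open subobject'', so that the two sides of the desired equivalence become parametrised by the same open set $U$.

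First I would record, on the spatial side, that for a subspace $F$ of $X$ a subset $A\subseteq F$ is $F$-regular-closed if and only if $A=\overline{U\cap F}\cap F$ for some open $U\subseteq X$. The $F$-open subsets of $F$ are exactly the sets $U\cap F$ with $U$ open in $X$; if $A$ is the $F$-closure of such a set it has the stated form, and conversely any set $B=\overline{U\cap F}\cap F$ is $F$-closed and contains the $F$-open set $U\cap F$, so $B\subseteq\overline{\Int_F B}\cap F\subseteq\overline{B}\cap F=B$, i.e.\ $B=\overline{\Int_F B}\cap F$ is $F$-regular-closed. Here I use $\overline{(-)}^{F}=\overline{(-)}\cap F$.

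Next I would record the localic analogue: since the open sublocales of $\widetilde{F}$ are precisely the $\widetilde{F}\cap\widetilde{U}$ with $U$ open in $X$, and since $\overline{(-)}^{\widetilde{F}}=\overline{(-)}\cap\widetilde{F}$, the sublocale $\widetilde{A}$ is $\widetilde{F}$-regular-closed if and only if $\widetilde{A}=\overline{\widetilde{F}\cap\widetilde{U}}\cap\widetilde{F}$ for some open $U\subseteq X$. The heart of the argument is then just to quote Lemma \ref{Knd}(2): with $\widetilde{F}$ complemented in $\mathfrak{O}X$ it gives, for each fixed open $U$, the equivalence $A=\overline{F\cap U}\cap F\iff\widetilde{A}=\overline{\widetilde{F}\cap\widetilde{U}}\cap\widetilde{F}$. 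Matching the two reformulations above and quantifying over $U$ yields that $A$ is $F$-regular-closed iff $\widetilde{A}$ is $\widetilde{F}$-regular-closed.

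The steps that carry the weight, rather than being genuine obstacles, are: that every $\widetilde{F}$-open sublocale really is of the form $\widetilde{F}\cap\widetilde{U}$ for an honest open $U\subseteq X$, so the same parameter $U$ can be used on both sides; that $T_D$-ness is what makes $A\mapsto\widetilde{A}$ injective and closure-compatible, so the two displayed identities are in genuine bijection; and the complementedness hypothesis on $\widetilde{F}$ required to invoke Lemma \ref{Knd}(2) (automatic when $F$ is closed, which is the relevant case for homogeneous maximal nowhere density). I do not expect a serious conceptual difficulty once the ``regular-closed $=$ closure of open'' reformulations are in place; the main obstacle is purely the bookkeeping around these hypotheses.
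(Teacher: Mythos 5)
Your proposal is correct and is essentially the proof the paper has in mind: the paper omits the argument, saying only that the corollary follows from the lemma giving $A=\overline{F\cap U}\cap F\iff\widetilde{A}=\overline{\widetilde{F}\cap\widetilde{U}}\cap\widetilde{F}$ (Lemma \ref{Knd}(2), mis-cited in the text as Lemma \ref{binaryintersection}(2)), and you supply exactly the reformulation of regular-closedness on both sides that makes this quotation work. Your remark that the complementedness of $\widetilde{F}$ is needed to invoke that lemma (and holds automatically when $F$ is closed, the case actually used later) correctly flags a hypothesis the paper's statement of the corollary glosses over.
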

\begin{proposition}
	Let $X$ be T$ _{D} $-space. A closed set $F\subseteq X$ is homogeneous maximal nowhere dense iff $\widetilde{F}\subseteq\mathfrak OX$ is homogeneous maximal nowhere dense.
\end{proposition}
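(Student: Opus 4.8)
The plan is to deduce both implications from two conservativity facts already available: Proposition \ref{mnd} (a subset of a $T_D$-space is maximal nowhere dense iff the sublocale it induces is) and Corollary \ref{regularclosedF} (for $A\subseteq F$, $A$ is $F$-regular-closed iff $\widetilde A$ is $\widetilde F$-regular-closed). Throughout I would use that $F$ closed forces $\widetilde F=\mathfrak{c}(X\smallsetminus F)$ to be a closed, hence complemented, sublocale of $\mathfrak{O}X$, that $N$ is nowhere dense in $X$ iff $\widetilde N$ is nowhere dense in $\mathfrak{O}X$, and that in a $T_D$-space $A\mapsto\widetilde A$ is an order-embedding of subsets into $\mathcal{S}(\mathfrak{O}X)$, so $\widetilde A$ is non-void precisely when $A\neq\emptyset$.

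For the forward implication I would assume $F$ closed and homogeneous maximal nowhere dense; then $F$ is nowhere dense, so $\widetilde F$ is closed nowhere dense, and it remains to check that each non-void $\widetilde F$-regular-closed sublocale $\mathcal{A}$ is maximal nowhere dense in $\mathfrak{O}X$. By Observation \ref{regularclosed} applied with $x=X\smallsetminus F$, such an $\mathcal{A}$ has the form $\mathfrak{c}\big(a\rightarrow(X\smallsetminus F)\big)$ for some $a\in\mathfrak{O}X$; since $X\smallsetminus F$ is open, the element $c:=a\rightarrow(X\smallsetminus F)$ is open, so $\mathcal{A}=\mathfrak{c}(c)=\widetilde{X\smallsetminus c}$, and $X\smallsetminus F\leq c$ shows $A:=X\smallsetminus c$ is a closed subset of $F$ with $\widetilde A=\mathcal{A}$. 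Non-voidness of $\mathcal{A}$ gives $A\neq\emptyset$, and Corollary \ref{regularclosedF} turns ``$\widetilde A$ is $\widetilde F$-regular-closed'' into ``$A$ is $F$-regular-closed''; homogeneity of $F$ then makes $A$ maximal nowhere dense in $X$, whence Proposition \ref{mnd} makes $\mathcal{A}=\widetilde A$ maximal nowhere dense in $\mathfrak{O}X$.

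For the reverse implication I would assume $\widetilde F$ homogeneous maximal nowhere dense; then $\widetilde F$ is closed nowhere dense, so $F$ is nowhere dense in $X$ (and $F$ is closed by hypothesis), and for a non-void $F$-regular-closed $A\subseteq F$, Corollary \ref{regularclosedF} gives that $\widetilde A$ is $\widetilde F$-regular-closed and non-void; homogeneity of $\widetilde F$ makes $\widetilde A$ maximal nowhere dense in $\mathfrak{O}X$, and Proposition \ref{mnd} pulls this back to $A$ being maximal nowhere dense in $X$, so $F$ is homogeneous maximal nowhere dense. The only step requiring genuine care is the surjectivity invoked in the forward direction, namely that every $\widetilde F$-regular-closed sublocale of $\mathfrak{O}X$ is of the form $\widetilde A$ for some closed $A\subseteq F$ (so that it actually comes from an $F$-regular-closed subset); once that is in hand, everything else is a routine combination of the cited conservativity results.
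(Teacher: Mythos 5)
Your proof is correct and follows essentially the same route as the paper, whose own proof simply cites Proposition \ref{mnd}, Lemma \ref{Knd}(1) and Corollary \ref{regularclosedF}. The one point you rightly flag as needing care --- that every non-void $\widetilde{F}$-regular-closed sublocale is induced by a non-empty $F$-regular-closed subset, which you settle via Observation \ref{regularclosed} and the identity $\mathfrak{c}(c)=\widetilde{X\smallsetminus c}$ for open $c$ --- is precisely the detail the paper's one-line proof leaves implicit, and your treatment of it is sound.
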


\begin{proof}
	Follows from Proposition \ref{mnd}, Lemma \ref{Knd}(1) and Corollary \ref{regularclosedF}.
\end{proof}
	
	The following result tells us that homogeneous maximal nowhere density is regular-closed hereditary.
	\begin{proposition}
		Let $L$ be a locale and $F$ be a closed nowhere dense sublocale of $L$. If $F$ is homogeneous maximal nowhere dense and $A$ is a non-void $F$-regular-closed sublocale, then $A$ is homogeneous maximal nowhere dense.
	\end{proposition}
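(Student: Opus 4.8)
The plan is to reduce everything to Observation~\ref{regularclosed} together with the characterisation of homogeneous maximal nowhere density in Proposition~\ref{hmndcharac}, exploiting the fact that a regular-closed sublocale of an $F$-regular-closed sublocale is already an $F$-regular-closed sublocale. Write $F=\mathfrak{c}(x)$ for a suitable $x\in L$ (recall $F$ is closed). Since $F$ is homogeneous maximal nowhere dense, Proposition~\ref{hmndcharac} gives that $\mathfrak{c}(c\rightarrow x)$ is maximal nowhere dense in $L$ for every $c\in L$. By Observation~\ref{regularclosed} the given non-void $F$-regular-closed sublocale $A$ has the form $A=\mathfrak{c}(a\rightarrow x)$ for some $a\in L$; in particular $A$ is closed, and being a non-void $F$-regular-closed sublocale it is maximal nowhere dense in $L$ by hypothesis, hence nowhere dense. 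So $A$ is a closed nowhere dense sublocale, and it remains to verify the defining condition of homogeneous maximal nowhere density for $A$.

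Next I would take an arbitrary non-void $A$-regular-closed sublocale $B$. Applying Observation~\ref{regularclosed} once more, this time to the closed (nowhere dense) sublocale $A=\mathfrak{c}(a\rightarrow x)$, we obtain $B=\mathfrak{c}\bigl(b\rightarrow(a\rightarrow x)\bigr)$ for some $b\in L$. The one substantive step is the Heyting identity $b\rightarrow(a\rightarrow x)=(a\wedge b)\rightarrow x$, valid in any frame, which rewrites $B=\mathfrak{c}\bigl((a\wedge b)\rightarrow x\bigr)$. Thus $B$ is of the form $\mathfrak{c}(c\rightarrow x)$ with $c=a\wedge b\in L$, so $B$ is a non-void $F$-regular-closed sublocale; since $F$ is homogeneous maximal nowhere dense, $B$ is maximal nowhere dense in $L$. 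As $B$ was arbitrary, every non-void $A$-regular-closed sublocale is maximal nowhere dense in $L$, and therefore $A$ is homogeneous maximal nowhere dense.

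The main obstacle — really the only point that is not pure bookkeeping — is recognising that $A$-regular-closed sublocales are $F$-regular-closed, which hinges on writing $A$ through Observation~\ref{regularclosed} and then collapsing the nested Heyting arrows via $b\rightarrow(a\rightarrow x)=(a\wedge b)\rightarrow x$. Once that is in place, the statement follows immediately from the hypothesis that $F$ is homogeneous maximal nowhere dense, with no further appeal to the internal structure of nowhere dense sublocales.
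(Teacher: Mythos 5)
Your proof is correct, and it takes a cleaner route than the paper's. The paper argues by contradiction: writing $A=\overline{\mathfrak{o}(x)\cap F}$ and a non-void $A$-regular-closed $N=\overline{\mathfrak{o}(y)\cap A}$, it only establishes the \emph{containment} $\overline{\mathfrak{o}(y\wedge x)\cap F}\subseteq N$ of a non-void $F$-regular-closed sublocale inside $N$, and then derives a contradiction from the assumption that $N$ is nowhere dense in some nowhere dense $B$ (since the smaller sublocale would then also lie in $\NdS(B)$, violating its maximal nowhere density). You instead prove the \emph{equality}: via Observation~\ref{regularclosed} and the Heyting identity $b\rightarrow(a\rightarrow x)=(a\wedge b)\rightarrow x$, every non-void $A$-regular-closed sublocale is literally of the form $\mathfrak{c}\bigl((a\wedge b)\rightarrow x\bigr)$, i.e.\ is itself a non-void $F$-regular-closed sublocale, so the conclusion drops out of the hypothesis with no contradiction argument and no appeal to how nowhere density behaves under passing to sublocales. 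Your version isolates a slightly stronger structural fact (regular-closed sublocales of $F$-regular-closed sublocales are $F$-regular-closed), which in fact shows the paper's displayed containment is an equality; the paper's version avoids the explicit Heyting computation at the cost of routing through $\NdS(B)$. Both arguments use that $A$ and its regular-closed sublocales are genuinely closed in $L$ (so that relative closures coincide with absolute ones), and both correctly note that $A$, being contained in the nowhere dense $F$, is closed nowhere dense, so the definition of homogeneous maximal nowhere density applies to it.
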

	\begin{proof}
		Let $N$ be a non-void regular-closed sublocale of $A$ and suppose that there is $B\in\NdS(L)$ such that $N\in\NdS(B)$. Because $A$ is $F$-regular-closed and $N$ is $A$-regular-closed, $A=\overline{\mathfrak{o}(x)\cap F}\cap F$ and $N=\overline{\mathfrak{o}(y)\cap A}\cap A$ for some $x,y\in L$.  Since both $F$ and $A$ are closed, $A=\overline{\mathfrak{o}(x)\cap F}$ and $N=\overline{\mathfrak{o}(y)\cap A}$ so that $N=\overline{\mathfrak{o}(y)\cap \overline{\mathfrak{o}(x)\cap F}}$. Therefore $$\overline{\mathfrak{o}(y)\cap\mathfrak{o}(x)\cap F}=\overline{\mathfrak{o}(y\wedge x)\cap F}\subseteq N.$$ The sublocale $\overline{\mathfrak{o}(y)\cap\mathfrak{o}(x)\cap F}\neq\mathsf{O}$, otherwise $\mathfrak{o}(y)\cap\mathfrak{o}(x)\cap F=\mathsf{O}$ making $\mathfrak{o}(y)\cap\overline{\mathfrak{o}(x)\cap F}=\mathsf{O}$ which is not possible. Since $N$ is nowhere dense in $B$, we get that $\overline{\mathfrak{o}(y\wedge x)\cap F}$ is nowhere dense in $B$ making $\overline{\mathfrak{o}(y\wedge x)\cap F}\cap F\in\NdS(B)$. This is not possible because $\overline{\mathfrak{o}(y\wedge x)\cap F}\cap F$ is non-void and regular-closed in $F$ which must be maximal nowhere dense in $L$.  Thus $A$ is homogeneous maximal nowhere dense.
	\end{proof}
	
	We close this section by considering a relationship between maximal nowhere dense sublocales and (strongly) homogeneous maximal nowhere dense sublocales.
	
	\begin{proposition}\label{smndandmnd}Every (strongly) homogeneous maximal nowhere dense sublocale is maximal nowhere dense.
	\end{proposition}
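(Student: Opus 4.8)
The plan is to use the simple observation that a non-void sublocale $N$ of $L$ is one of its own non-void regular-closed sublocales, so that the homogeneity clause of Definition \ref{hmnd} (and of its strong counterpart) may be applied to $N$ itself. Indeed, the largest open sublocale of $N$, regarded as a locale in its own right, is $N$, and its closure computed inside $N$ is again $N$; hence $N$ is $N$-regular-closed. When $N=\mathfrak{c}(x)$ is closed in $L$ this also follows from Observation \ref{regularclosed}: the $N$-regular-closed sublocales are exactly the $\mathfrak{c}(a\rightarrow x)$ with $a\in L$, and $a=1$ gives $\mathfrak{c}(1\rightarrow x)=\mathfrak{c}(x)=N$. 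I keep $N$ non-void throughout, since the void sublocale meets the homogeneity clause vacuously yet, by Example \ref{mndexample}(2), is never maximal nowhere dense, so it has to be excluded.

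First I would dispose of the homogeneous case. If $N$ is a non-void homogeneous maximal nowhere dense sublocale, then by the remark above $N$ is a non-void $N$-regular-closed sublocale of itself, so Definition \ref{hmnd} forces $N$ to be maximal nowhere dense in $L$; that is all that is needed.

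For the strongly homogeneous case the same first step shows that $N$, being a non-void $N$-regular-closed sublocale, is strongly maximal nowhere dense, and it then remains to observe that a non-void strongly maximal nowhere dense sublocale is automatically maximal nowhere dense. The latter is a one-line argument: were $N$ nowhere dense in some nowhere dense sublocale $K$ of $L$, then $N\subseteq K\in\NdS(L)$, so strong maximality would give $K=N$, whence $N$ would be nowhere dense as a sublocale of itself and therefore equal to $\mathsf{O}$, contradicting non-voidness; so no such $K$ exists and $N$ is maximal nowhere dense.

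I do not foresee a real obstacle: the entire content is the recognition that $N$ qualifies as an $N$-regular-closed sublocale of itself — handled by Observation \ref{regularclosed} with $a=1$ in the closed case, and directly otherwise — together with keeping the non-voidness hypothesis in play and the short lemma that non-void strong maximal nowhere density implies maximal nowhere density.
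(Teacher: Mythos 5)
Your proposal is correct and follows essentially the same route as the paper, whose entire proof is the observation that every locale is a regular-closed sublocale of itself, so the homogeneity clause applies to $N$ itself. You are merely more explicit, and rightly so, in excluding the void sublocale (which satisfies the homogeneity clause vacuously but is never maximal nowhere dense) and in supplying the short extra step for the ``strongly'' variant.
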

	\begin{proof}Follows since every locale is a regular-closed sublocale of itself.
	\end{proof}

	\section{Maximal nowhere density and Inaccessibility}\label{inaccessandmnd}
		The aim of this section is to explore a relationship between the variations of nowhere dense sublocales discussed in the previous two sections (which are maximal nowhere dense sublocales and homogeneous maximal nowhere dense sublocales) and sublocales called inaccessible sublocales and almost inaccessible sublocales.
	
	En route to introducing inaccessible and almost inaccessible sublocales, we consider Veksler's notions of an inaccessible point and an almost inaccessible point of a Tychonoff space which he introduced in \cite{V}. He calls a point $x\in E\subseteq X$, where $X$ is a Tychonoff space, $E$-inaccessible (resp. almost $E$-inaccessible) if $x\notin \overline{N}$ (resp. $x\notin\Int_{E}(\overline{N}\cap E)$) for all $(X\smallsetminus E)$-closed nowhere dense $N$. We shall transfer these notions to locales.

	Our journey to introducing localic notions of inaccessible and almost inaccessible points will start with inaccessible points and end with almost inaccessible points.
	
	In a Tychonoff space $X$, we have that $x\notin \overline{N}$ if and only if $\overline{\{x\}}\cap \overline{N}=\emptyset$ if and only if $(X\smallsetminus\overline{\{x\}})\cup (X\smallsetminus\overline{N})=X$ for every $x\in X$, $N\subseteq X$.  So the definition of an $E$-inaccessible point $x\in E\subseteq X$ is equivalent to:
	
	\begin{quote}
		\emph{$(X\smallsetminus\overline{\{x\}})\cup (X\smallsetminus\overline{N})=X$ for all $(X\smallsetminus E)$-closed nowhere dense $N$.} 
	\end{quote}
	Recall that $X\smallsetminus\overline{A}=0_{\widetilde{A}}$ for any subset $A$ of a space $X$. This and the preceding paragraph motivate the following localic definition of an inaccessible point.
	\begin{definition}\label{inaccessible}
		A point $p$ of a sublocale $S$ of a completely regular locale $L$ is \textit{$S$-inaccessible} if for each $(L\smallsetminus S)$-closed nowhere dense sublocale $N$, $0_{N}\vee p=1$, where the join is calculated in $L$.
	\end{definition}

	Recall from \cite[Lemma 3.2.1]{BPPP} that for a subset $A$ of a T$_{D}$-space $X$, $$\bigvee\{\{X\smallsetminus\overline{\{x\}},X\}:x\notin A\}=\widetilde{X\smallsetminus A}$$ is the supplement of $\widetilde{A}$, i.e., $\widetilde{X\smallsetminus A}=\widetilde{X}\smallsetminus\widetilde{A}$. 
	
	According to \cite{DM}, a regular locale is \textit{T$ _{1} $} in the sense that every point is a maximal element. Hence a point $p$ of a regular locale $L$ has a property that $a\vee p=1$ if and only if $a\nleq p$ for every $a\in L$.  
	
	In what follows, we show that a point $x$ of a Tychonoff space $X$ is inaccessible if and only if $\widetilde{x}$ is inaccessible. We shall need the following lemma. We shall make use of the fact that $\widetilde{A}=\{\inte((X\smallsetminus A)\cup G):G\in \mathfrak{O}X\}$ for every subset $A$ of a topological space $X$.
	
	\begin{lemma}\label{lemmainacc}
		Let $X$ be a topological space, $F$ a closed subset of $X$ and $A\subseteq X$. Then $\bigwedge(\widetilde{F}\cap\widetilde{A})=\inte((X\smallsetminus A)\cup (X\smallsetminus F))$
	\end{lemma}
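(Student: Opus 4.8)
The plan is to read $\bigwedge(\widetilde{F}\cap\widetilde{A})$ as the least element of the sublocale $\widetilde{F}\cap\widetilde{A}$ of $\mathfrak{O}X$ (an infimum of open sets, hence the interior of their intersection), and to exhibit the open set $b:=\inte((X\smallsetminus A)\cup(X\smallsetminus F))$ as that least element. For this it suffices to check two things: that $b\in\widetilde{F}\cap\widetilde{A}$, and that $b\subseteq V$ for every $V\in\widetilde{F}\cap\widetilde{A}$. The first says $b$ is a member and the second that it is a lower bound, so together they identify $b$ with the bottom of the sublocale, i.e. with $\bigwedge(\widetilde{F}\cap\widetilde{A})$.

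First I would record the two descriptions used. Since $F$ is closed, $X\smallsetminus F$ is open and $\widetilde{F}=\mathfrak{c}(X\smallsetminus F)=\{V\in\mathfrak{O}X:X\smallsetminus F\subseteq V\}$. On the other hand, by the stated formula $\widetilde{A}=\{\inte((X\smallsetminus A)\cup G):G\in\mathfrak{O}X\}$, so if $V\in\widetilde{A}$ then $V=\inte((X\smallsetminus A)\cup G)$ for some open $G$, and in particular $V$ is open with $V\subseteq(X\smallsetminus A)\cup G$.

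For membership: taking $G:=X\smallsetminus F$ in the description of $\widetilde{A}$ gives $b=\inte((X\smallsetminus A)\cup(X\smallsetminus F))\in\widetilde{A}$; and since $X\smallsetminus F$ is open with $X\smallsetminus F\subseteq(X\smallsetminus A)\cup(X\smallsetminus F)$, we get $X\smallsetminus F\subseteq b$, i.e. $b\in\widetilde{F}$. For the lower bound: given $V\in\widetilde{F}\cap\widetilde{A}$, write $V=\inte((X\smallsetminus A)\cup G)$ with $G$ open; then $X\smallsetminus F\subseteq V\subseteq(X\smallsetminus A)\cup G$, whence $(X\smallsetminus A)\cup(X\smallsetminus F)\subseteq(X\smallsetminus A)\cup G$, and taking interiors $b\subseteq\inte((X\smallsetminus A)\cup G)=V$. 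This yields $\bigwedge(\widetilde{F}\cap\widetilde{A})=b$, as required.

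I do not expect a genuine obstacle here; the one point to keep straight is that $\bigwedge$ of a sublocale denotes its smallest element, and that such an element is characterised precisely by lying in the sublocale while being a lower bound for it — which is exactly what the two checks above supply. Closedness of $F$ is used only to make $X\smallsetminus F$ an admissible choice of the parameter $G$ in the description of $\widetilde{A}$ and to present $\widetilde{F}$ as the closed sublocale $\mathfrak{c}(X\smallsetminus F)$.
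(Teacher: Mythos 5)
Your proof is correct and follows essentially the same route as the paper's: both verify that $\inte((X\smallsetminus A)\cup(X\smallsetminus F))$ lies in $\widetilde{F}\cap\widetilde{A}$ (giving $\bigwedge(\widetilde{F}\cap\widetilde{A})\leq b$) and that it is a lower bound for every $V\in\widetilde{F}\cap\widetilde{A}$ via the representation $V=\inte((X\smallsetminus A)\cup G)$. Your lower-bound step passes through $V\subseteq(X\smallsetminus A)\cup G$ directly where the paper nests interiors, but this is only a cosmetic difference.
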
\begin{proof}
		We have that $X\smallsetminus F\subseteq (X\smallsetminus A)\cup (X\smallsetminus F)$, making $$X\smallsetminus F=\inte(X\smallsetminus F)\subseteq \inte((X\smallsetminus A)\cup (X\smallsetminus F))$$ so that $\inte((X\smallsetminus A)\cup(X\smallsetminus F))\in\widetilde{F}$. Also, since $X\smallsetminus F$ is open, $\inte((X\smallsetminus A)\cup(X\smallsetminus F))\in \widetilde{A}.$ Therefore $\inte((X\smallsetminus A)\cup(X\smallsetminus F))\in \widetilde{F}\cap \widetilde{A}$, making  $\bigwedge(\widetilde{F}\cap\widetilde{A})\leq\inte((X\smallsetminus A)\cup (X\smallsetminus F))$. 
		
		On the other hand, let $V\in \widetilde{F}\cap\widetilde{A}$. Then $X\smallsetminus F\subseteq V$ and $V=\inte((X\smallsetminus A)\cup G)$ for some $G\in\mathfrak{O}X$. We get that \begin{align*}
			\inte((X\smallsetminus A)\cup (X\smallsetminus F))&\quad\subseteq\quad \inte((X\smallsetminus A)\cup V)\\
			&\quad=\quad \inte\left((X\smallsetminus A)\cup \inte((X\smallsetminus A)\cup G)\right)\\
			&\quad\subseteq\quad \inte((X\smallsetminus A)\cup G)\\
			&\quad=\quad V.
		\end{align*}Therefore $\inte((X\smallsetminus A)\cup (X\smallsetminus F))\leq \bigwedge(\widetilde{F}\cap\widetilde{A})$ and hence  $\bigwedge(\widetilde{F}\cap\widetilde{A})=\inte((X\smallsetminus A)\cup (X\smallsetminus F))$.
	\end{proof}

	\begin{proposition}\label{vnd}
		Let $X$ be a Tychonoff space and $x\in E\subseteq X$. Then $x$ is $E$-inaccessible iff $\widetilde{x}$ is $\widetilde{E}$-inaccessible. 
	\end{proposition}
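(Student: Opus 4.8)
The plan is to set up a dictionary, $N\longleftrightarrow\widetilde N$, between the closed nowhere dense subsets of the subspace $X\smallsetminus E$ that occur in Veksler's definition of $E$-inaccessibility and the $(\mathfrak{O}X\smallsetminus\widetilde E)$-closed nowhere dense sublocales that occur in Definition \ref{inaccessible}, and then to observe that once this dictionary is in place the two ``obstruction-free'' conditions become literally the same. Throughout I identify the one-point sublocale $\widetilde x=\widetilde{\{x\}}$ with the point $p:=0_{\widetilde{x}}=X\smallsetminus\overline{\{x\}}$; this lies in $\widetilde E$ because $x\in E$, and $\mathfrak{O}X$ is completely regular because $X$ is Tychonoff, so Definition \ref{inaccessible} genuinely applies.

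First I would establish the dictionary: a sublocale $\mathcal{N}$ of $\mathfrak{O}X$ is $(\mathfrak{O}X\smallsetminus\widetilde E)$-closed and nowhere dense if and only if $\mathcal{N}=\widetilde N$ for some $N\subseteq X$ that is closed in the subspace $X\smallsetminus E$ and nowhere dense in $X$. Since $\mathfrak{O}X\smallsetminus\widetilde E=\widetilde{X\smallsetminus E}$ by \cite[Lemma 3.2.1]{BPPP}, a $(\mathfrak{O}X\smallsetminus\widetilde E)$-closed sublocale has the form $\widetilde{X\smallsetminus E}\cap\mathfrak{c}(a)=\widetilde{X\smallsetminus E}\cap\widetilde{X\smallsetminus a}$ for some $a\in\mathfrak{O}X$; as $\widetilde{X\smallsetminus a}=\mathfrak{c}(a)$ is complemented, Lemma \ref{binaryintersection} rewrites this as $\widetilde{(X\smallsetminus E)\cap(X\smallsetminus a)}$, and $N:=(X\smallsetminus E)\cap(X\smallsetminus a)$ is closed in $X\smallsetminus E$. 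Conversely, every $N$ closed in $X\smallsetminus E$ is $N=F\cap(X\smallsetminus E)$ with $F$ closed in $X$, and since $\widetilde F=\mathfrak{c}(X\smallsetminus F)$ is complemented, Lemma \ref{binaryintersection} gives $\widetilde N=\widetilde F\cap(\mathfrak{O}X\smallsetminus\widetilde E)=\mathfrak{c}(X\smallsetminus F)\cap(\mathfrak{O}X\smallsetminus\widetilde E)$, a closed sublocale of $\mathfrak{O}X\smallsetminus\widetilde E$. In both directions, $\mathcal{N}=\widetilde N$ is nowhere dense in $\mathfrak{O}X$ exactly when $N$ is nowhere dense in $X$, by the recalled fact that $N$ is nowhere dense in $X$ iff $\widetilde N$ is nowhere dense in $\mathfrak{O}X$.

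Next I would translate the remaining condition. From the recalled identity $X\smallsetminus\overline A=0_{\widetilde A}$ we get $0_{\widetilde N}=X\smallsetminus\overline N$; since $X$ is $T_1$ we have $p=X\smallsetminus\overline{\{x\}}=X\smallsetminus\{x\}$; and since $\mathfrak{O}X$ is regular, $b\vee p=1$ iff $b\nleq p$. Hence $0_{\widetilde N}\vee p=1$ iff $X\smallsetminus\overline N\nleq X\smallsetminus\{x\}$ iff $x\notin\overline N$. Feeding the dictionary into Definition \ref{inaccessible}, the statement ``$\widetilde x$ is $\widetilde E$-inaccessible'' unwinds to ``$x\notin\overline N$ for every $N\subseteq X$ that is closed in $X\smallsetminus E$ and nowhere dense in $X$'', which is precisely ``$x$ is $E$-inaccessible''; this gives both implications at once.

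I expect the only delicate point to be the dictionary, specifically the fact that every $(\mathfrak{O}X\smallsetminus\widetilde E)$-closed sublocale is induced by a subset closed in $X\smallsetminus E$ (and conversely), which is where the Tychonoff (really $T_D$) hypothesis, the supplement formula $\mathfrak{O}X\smallsetminus\widetilde E=\widetilde{X\smallsetminus E}$, and Lemma \ref{binaryintersection} all come into play. If one prefers not to parametrize these sublocales by $a\in\mathfrak{O}X$, an alternative route is to compute $0_{\widetilde{X\smallsetminus E}\cap\mathfrak{c}(a)}$ directly from Lemma \ref{lemmainacc}, obtaining $\inte(E\cup a)$, and to match it against the condition $x\notin\overline{(X\smallsetminus E)\cap(X\smallsetminus a)}$ using $\overline{X\smallsetminus S}=X\smallsetminus\inte S$; both routes land on the same equivalence.
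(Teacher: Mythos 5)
Your overall strategy coincides with the paper's: decompose the $(\mathfrak{O}X\smallsetminus\widetilde{E})$-closed sublocales as $\widetilde{F}\cap\widetilde{X\smallsetminus E}$ with $F$ closed in $X$, transfer nowhere density across the space/locale divide, identify the bottom element $0_{K}$ with $X\smallsetminus\overline{F\cap(X\smallsetminus E)}$, and use maximality of points in the regular locale $\mathfrak{O}X$ to turn $0_{K}\vee\widetilde{x}=1$ into $x\notin\overline{F\cap(X\smallsetminus E)}$. Your direct computation $0_{\widetilde{N}}=X\smallsetminus\overline{N}$ (after first establishing $K=\widetilde{N}$ via Lemma \ref{binaryintersection}) is a mild streamlining of the paper's detour through Lemma \ref{lemmainacc}, and you correctly flag that route as an alternative.

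There is, however, one genuine defect in your dictionary: you transfer \emph{absolute} nowhere density (``$N$ nowhere dense in $X$'' versus ``$\widetilde{N}$ nowhere dense in $\mathfrak{O}X$''), whereas Definition \ref{inaccessible} and Veksler's spatial definition quantify over subsets/sublocales that are closed \emph{and nowhere dense in the supplement} $X\smallsetminus E$ (resp.\ $L\smallsetminus S$). This is the paper's stated convention for the prefix ``$(L\smallsetminus S)$-'', it is forced by the intended match with remote sublocales in Proposition \ref{inaccessibilityand remoteness}, and it is how the paper's own proof proceeds: it invokes Lemma \ref{lembinaryintersection} precisely to match ``$(X\smallsetminus E)$-nowhere dense'' with ``$\widetilde{X\smallsetminus E}$-nowhere dense''. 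The two classes of test objects genuinely differ: for $X=\mathbb{R}$ and $E=\mathbb{R}\smallsetminus\{0\}$, the set $N=\{0\}$ is closed in $X\smallsetminus E$ and nowhere dense in $X$, yet it is the whole of the subspace $X\smallsetminus E$ and hence not nowhere dense in it. So as written you establish an equivalence between two conditions each of which may differ from the one in the statement. The repair is immediate --- replace the appeal to the absolute transfer by Lemma \ref{lembinaryintersection} applied with $X\smallsetminus E$ in the role of $F$ --- after which your argument becomes essentially the paper's.
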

	\begin{proof}
		$(\Longrightarrow):$ Let $K$ be an $(\widetilde{X}\smallsetminus\widetilde{E})$-closed nowhere dense sublocale. Choose a closed subset $F$ of $X$ such that $K=\widetilde{F}\cap (\widetilde{X}\smallsetminus\widetilde{E})$. Since  $\widetilde{X}\smallsetminus\widetilde{E}=\widetilde{X\smallsetminus E}$, $K=\widetilde{F}\cap\widetilde{X\smallsetminus E}\supseteq S_{\left(F\cap (X\smallsetminus E)\right)}$. It follows from Lemma \ref{lembinaryintersection} that $F\cap (X\smallsetminus E)$ is an $(X\smallsetminus E)$-closed nowhere dense subset. Since $x$ is $E$-inaccessible, $$\left(X\smallsetminus\overline{\{x\}}\right)\cup \left(X\smallsetminus\overline{F\cap (X\smallsetminus E)}\right)=X.$$ Because $\widetilde{x}=X\smallsetminus\overline{\{x\}}$ is a point and every completely regular locale is regular and hence T$_{1}$, it follows that $X\smallsetminus\overline{F\cap (X\smallsetminus E)}\nleq \widetilde{x}$.  Since $X\smallsetminus\overline{F\cap (X\smallsetminus E)}=\inte(E\cup (X\smallsetminus F))$ and $\inte(E\cup (X\smallsetminus F))=\bigwedge(\widetilde{F}\cap\widetilde{X\smallsetminus E})$ by Lemma \ref{lemmainacc}, $\bigwedge(\widetilde{F}\cap\widetilde{X\smallsetminus E})\nleq \widetilde{x}$ so that $$\bigwedge(\widetilde{F}\cap\widetilde{X\smallsetminus E})\vee \widetilde{x}=0_{K}\vee \widetilde{x}=1_{\mathfrak{O}X}$$ because $\mathfrak{O}X$ is T$_{1}$. Thus $\widetilde{x}$ is a $\widetilde{E}$-inaccessible point.
		
		$(\Longleftarrow):$ Let $C$ be an $(X\smallsetminus E)$-closed nowhere dense subset. Set $C=F\cap (X\smallsetminus E)$ for some closed $F\subseteq X$. If follows from Proposition \ref{Knd} that $\widetilde{F}\cap\widetilde{X\smallsetminus E}$ is $(\widetilde{X\smallsetminus E})$-closed nowhere dense. But $\widetilde{X}\smallsetminus\widetilde{E}=\widetilde{X\smallsetminus E}$, so the $(\widetilde{X}\smallsetminus\widetilde{E})$-closed sublocale $\widetilde{F}\cap (\widetilde{X}\smallsetminus\widetilde{E})$ is $(\widetilde{X}\smallsetminus\widetilde{E})$-nowhere dense. By hypothesis, $\widetilde{x}\vee 0_{\widetilde{F}\cap (\widetilde{X}\smallsetminus\widetilde{E})}=1_{\mathfrak OX}$. But $$0_{\widetilde{F}\cap (\widetilde{X}\smallsetminus\widetilde{E})}=\bigwedge\left(\widetilde{F}\cap (\widetilde{X}\smallsetminus\widetilde{E})\right)=\inte(E\cup (X\smallsetminus F))=X\smallsetminus\overline{F\cap (X\smallsetminus E)}$$where the second equality follows from Lemma \ref{lemmainacc}. Therefore $\left(X\smallsetminus\overline{\{x\}}\right)\cup \left(X\smallsetminus\overline{F\cap (X\smallsetminus E)}\right)=X$ which implies that $$\emptyset=\overline{\{x\}}\cap \overline{F\cap (X\smallsetminus E)}=\{x\}\cap \overline{F\cap (X\smallsetminus E)}.$$ Thus $x\notin \overline{F\cap (X\smallsetminus E)}=\overline{C}$, making $x$ $E$-inaccessible.
	\end{proof}

	To transfer almost inaccessibility to locales, we recall that for $x\in E\subseteq X$,
	\begin{align*}
		x\notin \Int_{E}(\overline{N}\cap E) 
		& \quad\Longleftrightarrow \quad
		x\notin E\cap \left(X\smallsetminus\overline{E\cap (X\smallsetminus\overline{N})}\right)\\
		& \quad\Longleftrightarrow \quad
		x\in \overline{E\cap (X\smallsetminus\overline{N})}.
	\end{align*} The above equivalence motivates the following localic definition of an almost inaccessible point. 
	
	\begin{definition}
		A point $p$ of a sublocale $S$ of a completely regular locale $L$ is \textit{almost $S$- inaccessible} if for each $(L\smallsetminus S)$-closed nowhere dense sublocale $N$, $p\in \cl_{S}\left(S\cap\left(L\smallsetminus\overline{ N}\right)\right)$.
	\end{definition}
	In what follows we prove that a point $x$ of $X$ is almost inaccessible precisely when $\widetilde{x}$ is almost inaccessible.
	
	\begin{proposition}
		Let $X$ be a Tychonoff space. A point $x$ of a subset $E$ of $X$ is almost $E$-inaccessible iff $\widetilde{x}$ is almost $\widetilde{E}$-inaccessible.
	\end{proposition}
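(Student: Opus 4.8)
The argument runs parallel to the proof of Proposition~\ref{vnd}. Throughout, write $D:=X\smallsetminus E$, so that $\widetilde X\smallsetminus\widetilde E=\widetilde D$ by \cite[Lemma~3.2.1]{BPPP}, and recall the equivalence $x\notin\Int_{E}(\overline N\cap E)\iff x\in\overline{E\cap(X\smallsetminus\overline N)}$ recorded just before the localic definition of an almost inaccessible point; thus \emph{$x$ is almost $E$-inaccessible} means exactly that $x\in\overline{E\cap(X\smallsetminus\overline C)}$ for every $D$-closed nowhere dense $C\subseteq X$. The plan is to set up a dictionary between $D$-closed nowhere dense subsets of $X$ and $\widetilde D$-closed nowhere dense sublocales of $\mathfrak OX$, and then translate the two membership conditions into one another term by term.

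I would fix a closed set $F\subseteq X$ and put $C:=F\cap D$ and $K:=\widetilde F\cap\widetilde D$, and record the following. (a) $K$ is the $\widetilde D$-closed sublocale $\widetilde D\cap\mathfrak c(X\smallsetminus F)$, and every $\widetilde D$-closed sublocale arises so; similarly $C$ is $D$-closed, and every $D$-closed subset $C'$ equals $\overline{C'}\cap D$, hence is of this form with $F=\overline{C'}$. (b) Since $S_{C}=\widetilde C\subseteq K$ always, and since, by Lemma~\ref{lembinaryintersection} (subspace $D$, subset $C$), $S_{C}$ is $\widetilde D$-nowhere dense iff $C$ is $D$-nowhere dense, one implication is immediate; for the converse, assuming $C$ is $D$-nowhere dense, $S_{C}$ and hence $\cl_{\widetilde D}(S_{C})=\widetilde D\cap\overline{S_{C}}$ is $\widetilde D$-nowhere dense, and $K\subseteq\widetilde D\cap\overline{S_{C}}$ once $\overline K=\overline{S_{C}}$ is known, so $K$ (a sublocale of a $\widetilde D$-nowhere dense sublocale) is $\widetilde D$-nowhere dense. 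Thus $K$ is $\widetilde D$-closed nowhere dense iff $C$ is $D$-closed nowhere dense. (c) By Lemma~\ref{lemmainacc} with the closed set $F$ and $A:=D$ one gets $\bigwedge K=\Int\bigl(E\cup(X\smallsetminus F)\bigr)$, while $\bigwedge S_{C}=X\smallsetminus\overline C=\Int\bigl((X\smallsetminus F)\cup E\bigr)$; these agree, so $\overline K=\mathfrak c(\bigwedge K)=\overline{S_{C}}=\widetilde{\overline C}$, the last equality by the $T_{D}$-identity $\overline{\widetilde N}=\widetilde{\overline N}$.

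With the dictionary in place the translation is routine. From (c), $\mathfrak OX\smallsetminus\overline K=\mathfrak OX\smallsetminus\widetilde{\overline C}=\widetilde{X\smallsetminus\overline C}=\mathfrak o(X\smallsetminus\overline C)$, and since $X\smallsetminus\overline C$ is open, Lemma~\ref{binaryintersection} gives $\widetilde E\cap(\mathfrak OX\smallsetminus\overline K)=\widetilde{E\cap(X\smallsetminus\overline C)}$; then, using $\cl_{S}(T)=S\cap\overline T$ for $T\subseteq S$ together with $\overline{\widetilde N}=\widetilde{\overline N}$,
\[
\cl_{\widetilde E}\bigl(\widetilde E\cap(\mathfrak OX\smallsetminus\overline K)\bigr)=\widetilde E\cap\overline{\widetilde{E\cap(X\smallsetminus\overline C)}}=\widetilde E\cap\widetilde{\overline{E\cap(X\smallsetminus\overline C)}}.
\]
Finally, since $X$ is $T_{1}$, the prime element $\widetilde x=X\smallsetminus\overline{\{x\}}$ of $\mathfrak OX$ satisfies $\widetilde x\in\widetilde B\iff x\in B$ for any $B\subseteq X$ (check directly from $\widetilde B=\{\Int((X\smallsetminus B)\cup G):G\in\mathfrak OX\}$, or via $\widetilde{\{x\}}\subseteq\widetilde B\iff\{x\}\subseteq B$). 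Hence $\widetilde x\in\widetilde E\cap\widetilde{\overline{E\cap(X\smallsetminus\overline C)}}$ iff $x\in E$ and $x\in\overline{E\cap(X\smallsetminus\overline C)}$, i.e.\ (as $x\in E$ by hypothesis) iff $x\in\overline{E\cap(X\smallsetminus\overline C)}$. Letting $F$ — equivalently, by (a)--(b), the $D$-closed nowhere dense set $C$ — range over all possibilities yields precisely: $\widetilde x$ is almost $\widetilde E$-inaccessible iff $x$ is almost $E$-inaccessible.

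I expect the main friction to be item (c): showing that the closure of the abstract sublocale $K=\widetilde F\cap\widetilde D$ — which need not itself be of the form $\widetilde{(-)}$, since $\widetilde D$ is only the \emph{supplement} of $\widetilde E$ and not in general its complement — coincides with $\widetilde{\overline C}$, so that $K$ is pinched between $S_{C}$ and its closure; this is exactly what keeps the nowhere-dense correspondence in (b) honest in both directions. The membership translation in the last paragraph is elementary but must be phrased with care, since $\widetilde x$ denotes the prime element $X\smallsetminus\overline{\{x\}}$ of $\mathfrak OX$ rather than a sublocale.
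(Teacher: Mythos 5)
Your proposal is correct and follows essentially the same route as the paper: the same dictionary $C=F\cap(X\smallsetminus E)\leftrightarrow K=\widetilde F\cap\widetilde{X\smallsetminus E}$, the same appeal to Lemma~\ref{lemmainacc} to identify $\overline K$ with $\widetilde{\overline C}$, and the same use of Lemma~\ref{lembinaryintersection} to transfer nowhere density. The only (harmless) divergence is at the end, where you compute $\widetilde E\cap(\mathfrak OX\smallsetminus\overline K)$ outright as the induced sublocale $\widetilde{E\cap(X\smallsetminus\overline C)}$ and invoke $\widetilde x\in\widetilde B\iff x\in B$, whereas the paper verifies each direction separately by testing against arbitrary closed (sub)sets containing $\widetilde E\cap(\widetilde X\smallsetminus\overline K)$, resp.\ $E\cap(X\smallsetminus\overline N)$.
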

	\begin{proof}
		$(\Longrightarrow):$ It is clear that $\widetilde{x}$ is a point belonging to the closed sublocale $\widetilde{\overline{E}}=\overline{\widetilde{E}}$. Choose an $(\widetilde{X}\smallsetminus\widetilde{E})$-closed nowhere dense $K$. Then $K=\widetilde{F}\cap (\widetilde{X}\smallsetminus\widetilde{E})$ for some closed $F\subseteq X$. Since $\widetilde{X}\smallsetminus\widetilde{E}=\widetilde{X\smallsetminus E}$, $K=\widetilde{F}\cap \widetilde{X\smallsetminus E}$. It follows from Proposition \ref{Knd} that $F\cap (X\smallsetminus E)$ is $(X\smallsetminus E)$-closed nowhere dense. Therefore $x\notin\inte_{E}(\overline{F\cap (X\smallsetminus E)}\cap E)$ which means $x\in\overline{E\cap (X\smallsetminus \overline{F\cap (X\smallsetminus E)})}$. We show that $\widetilde{x}\in \overline{\widetilde{E}\cap(\widetilde{X}\smallsetminus\overline{K})}$. Let $U\in\mathfrak OX$ be such that $\widetilde{E}\cap(\widetilde{X}\smallsetminus\overline{K})\subseteq \mathfrak{c}(U)$. Then $\widetilde{E}\cap \left(\widetilde{X}\smallsetminus \overline{\widetilde{F}\cap(\widetilde{X}\smallsetminus\widetilde{E})}\right)\subseteq\mathfrak{c}(U)$, i.e., $\widetilde{E}\cap \left(\widetilde{X}\smallsetminus \overline{\widetilde{F}\cap\widetilde{X\smallsetminus E}}\right)\subseteq\mathfrak{c}(U)$. We get that \begin{align*}
			\widetilde{E}&\quad\subseteq\quad\mathfrak{c}(U)\vee \overline{\widetilde{F}\cap\widetilde{X\smallsetminus E}}\\
			&\quad=\quad\mathfrak{c}(U)\vee \mathfrak{c}\left(\bigwedge \left(\widetilde{F}\cap(\widetilde{X\smallsetminus E})\right)\right)\\
			&\quad=\quad\mathfrak{c}(U)\vee \mathfrak{c}(X\smallsetminus\overline{F\cap (X\smallsetminus E)})\quad\text{since }\bigwedge \left(\widetilde{F}\cap(\widetilde{X\smallsetminus E})\right)=X\smallsetminus\overline{F\cap(X\smallsetminus E)}\\
			&\quad=\quad\mathfrak{c}\left(U\cap (X\smallsetminus\overline{F\cap (X\smallsetminus E)})\right)\\
			&\quad=\quad S_{\left((X\smallsetminus U)\cup \overline{F\cap (X\smallsetminus E)}\right)}\quad\text{by \cite[Lemma 2.10.]{N}}.
		\end{align*}Therefore $E\subseteq (X\smallsetminus U)\cup \overline{F\cap (X\smallsetminus E)}$ so that $E\cap (X\smallsetminus\overline{F\cap (X\smallsetminus E)})\subseteq X\smallsetminus U$. Because $U$ is open, $X\smallsetminus U$ is closed, making $x\in \overline{E\cap (X\smallsetminus \overline{F\cap (X\smallsetminus E)})}\subseteq X\smallsetminus U$. Therefore $\widetilde{x}\in\widetilde{X\smallsetminus U}=\mathfrak{c}(U)$ implying that $\widetilde{x}\in \overline{\widetilde{E}\cap(\widetilde{X}\smallsetminus\overline{K})}$. Thus $$\widetilde{x}\in \overline{\widetilde{E}\cap(\widetilde{X}\smallsetminus\overline{K})}\cap\widetilde{E}=cl_{\widetilde{E}}\left(\widetilde{E}\cap(\widetilde{X}\smallsetminus\overline{K})\right)$$ which means that $\widetilde{x}$ is almost $\widetilde{E}$-inaccessible.

		$(\Longleftarrow):$ Let $N$ be an $(X\smallsetminus E)$-closed nowhere dense subset and set $N=F\cap (X\smallsetminus E)$ for some closed $F\subseteq X$. It follows from Proposition \ref{Knd} that $\widetilde{F}\cap\widetilde{X\smallsetminus E}=\widetilde{F}\cap(\widetilde{X}\smallsetminus\widetilde{E})$ is $(\widetilde{X\smallsetminus E}=\widetilde{X}\smallsetminus\widetilde{E})$-closed nowhere dense. Therefore $$\widetilde{x}\in \cl_{\widetilde{E}}\left(\widetilde{E}\cap \left(\widetilde{X}\smallsetminus\overline{\widetilde{F}\cap(\widetilde{X}\smallsetminus\widetilde{E})}\right)\right).$$ We show that $x\in \overline{E\cap (X\smallsetminus \overline{N})}$. Let $K$ be a closed set such that $E\cap (X\smallsetminus\overline{N})\subseteq K$. Then $E\subseteq \overline{N}\cup K$ so that $$\widetilde{E}\subseteq\widetilde{\overline{N}}\vee \widetilde{K}=\overline{\widetilde{N}}\vee\widetilde{K}=\overline{S_{(F\cap (X\smallsetminus E))}}\vee \widetilde{K}\subseteq \overline{\widetilde{F}\cap \left(\widetilde{X}\smallsetminus\widetilde{E}\right)}\vee\widetilde{K}.$$ Therefore $$\widetilde{E}\cap \left(\widetilde{X}\smallsetminus\overline{\widetilde{F}\cap\left(\widetilde{X}\smallsetminus\widetilde{E}\right)}\right)\subseteq\widetilde{K}.$$ Because $\widetilde{K}$ is a closed sublocale, $$\cl_{\widetilde{E}}\left(\widetilde{E}\cap \left(\widetilde{X}\smallsetminus\overline{\widetilde{F}\cap(\widetilde{X}\smallsetminus\widetilde{E})}\right)\right)=\widetilde{E}\cap \overline{\left(\widetilde{E}\cap \left(\widetilde{X}\smallsetminus\overline{\widetilde{F}\cap(\widetilde{X}\smallsetminus\widetilde{E})}\right)\right)}\subseteq \widetilde{K}$$ so that $\widetilde{x}\in\widetilde{K}$. Therefore $x\in K$. Thus $x\in \overline{E\cap (X\smallsetminus \overline{N})}$ which implies $x\in \cl_{E}\left(E\cap (X\smallsetminus \overline{N})\right)$. As a result, $x\notin\inte_{E}\left(E\cap \overline{N}\right)$. Hence $x$ is almost $E$-inaccessible.
	\end{proof}

	In terms of sublocales, we define  inaccessibility and almost inaccessibility on arbitrary locales. We give the following definition.
	
	\begin{definition}
		Let $S$ be a sublocale of $L$. A sublocale $T\in \mathcal{S}(S)$ is \textit{$S$-inaccessible} (resp. \textit{almost $S$-inaccessible}) if for all $(L\smallsetminus S)$-nowhere dense sublocale $N$, $T\cap \overline{N}=\mathsf{O}$ (resp. $T\subseteq\cl_{S}\left[S\cap \left(L\smallsetminus\overline{ N}\right)\right]$).
	\end{definition}
	
	We introduce the following notations for any locale $L$ and $S\in\mathcal{S}(L)$:
	\[\mathcal{S}_{\Inac}(S)=\{A\in \mathcal{S}(L):A\ \text{is}\ S \text{-inaccessible}\},\]and
	\[\mathcal{S}_{\Ainac}(S)=\{A\in \mathcal{S}(L):A\ \text{is}\ \text{almost}\ S \text{-inaccessible}\}.\]
	We shall drop prefix $S$- if the sublocale is clear from the context.
	
In what follows, we characterize inaccessible sublocales. The proof is similar to that of \cite[Proposition 3.10.]{N} and shall be omitted.
\begin{proposition}\label{inacccharact}
	The following are equivalent for a sublocale of $S$ of $L$. 
	\begin{enumerate}
		\item $T\in\mathcal{S}(S)$ is $S$-inaccessible.
		\item $T\cap \mathfrak{c}(x)=\mathsf{O}$ for each $(L\smallsetminus S)$-dense $x$.
		\item $T\subseteq \mathfrak{o}(x)$ for every $(L\smallsetminus S)$-dense $x$.
		\item $\nu_{T}(x)=1$ for each $(L\smallsetminus S)$-dense $x$.
	\end{enumerate} 
\end{proposition}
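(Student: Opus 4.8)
The plan is to follow the proof of \cite[Proposition 3.10]{N} almost verbatim, the only change being that density is now measured inside the sublocale $L\smallsetminus S$ rather than inside $L$. The whole argument rests on two bookkeeping facts, which I would establish first. (i) For $x\in L\smallsetminus S$, the closed sublocale $\mathfrak{c}_{L\smallsetminus S}(x)=(L\smallsetminus S)\cap\mathfrak{c}(x)$ of $L\smallsetminus S$ is $(L\smallsetminus S)$-nowhere dense if and only if $x$ is $(L\smallsetminus S)$-dense; this is the statement ``$\mathfrak{c}(a)$ is nowhere dense iff $a$ is dense'' (recalled in the Corollary following Proposition~\ref{mndcmnd}) read inside the locale $L\smallsetminus S$. (ii) For any $N\in\mathcal{S}(L\smallsetminus S)$ one has $\bigwedge N\in L\smallsetminus S$, the closure of $N$ taken in $L$ equals $\mathfrak{c}(\bigwedge N)$, and $N$ is $(L\smallsetminus S)$-nowhere dense iff its closure in $L\smallsetminus S$, namely $(L\smallsetminus S)\cap\mathfrak{c}(\bigwedge N)$, is; this uses that a sublocale is nowhere dense iff its closure is nowhere dense, applied inside $L\smallsetminus S$.

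With these in hand, $(1)\Leftrightarrow(2)$ is almost immediate. For $(1)\Rightarrow(2)$, given an $(L\smallsetminus S)$-dense $x$ I would plug $N=(L\smallsetminus S)\cap\mathfrak{c}(x)$ into the definition of $S$-inaccessibility: by (i) this $N$ is $(L\smallsetminus S)$-nowhere dense, and since $x$ is the least element of $N$ its closure in $L$ is $\mathfrak{c}(x)$, so $T\cap\mathfrak{c}(x)=T\cap\overline{N}=\mathsf{O}$. For $(2)\Rightarrow(1)$, given an arbitrary $(L\smallsetminus S)$-nowhere dense $N$ I would set $x=\bigwedge N$; by (ii) this $x$ lies in $L\smallsetminus S$, is $(L\smallsetminus S)$-dense, and satisfies $\overline{N}=\mathfrak{c}(x)$, so $T\cap\overline{N}=T\cap\mathfrak{c}(x)=\mathsf{O}$ by hypothesis.

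The equivalences $(2)\Leftrightarrow(3)\Leftrightarrow(4)$ are the easy part and need no reference to nowhere density at all. One direction of $(2)\Leftrightarrow(3)$ follows from $\mathfrak{o}(x)\cap\mathfrak{c}(x)=\mathsf{O}$; for the converse, $\nu_{T}(x)$ belongs to $T$ and satisfies $\nu_{T}(x)\ge x$, hence $\nu_{T}(x)\in T\cap\mathfrak{c}(x)=\mathsf{O}=\{1\}$, which forces $\nu_{T}(x)=1$ and therefore $T\subseteq\mathfrak{o}(x)$. Finally $(3)\Leftrightarrow(4)$ is exactly the recalled fact that $S\subseteq\mathfrak{o}(x)$ iff $\nu_{S}(x)=1$, applied to $T$ in place of $S$.

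The one place deserving care — and the reason this is not a literal copy of the remote-sublocale argument — is keeping straight which ambient locale each closure and pseudocomplement lives in: $\overline{N}$ in the definition of $S$-inaccessibility is the closure computed in $L$, whereas $(L\smallsetminus S)$-nowhere density of $N$ is an intrinsic property of the locale $L\smallsetminus S$; the two are reconciled by the identity $\overline{N}=\mathfrak{c}(\bigwedge N)$ together with $\bigwedge N\in L\smallsetminus S$. Once that is sorted the proof is routine, which is presumably why it is omitted.
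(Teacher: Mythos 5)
Your proof is correct and follows exactly the route the paper intends: the paper omits the proof, stating only that it parallels \cite[Proposition 3.10]{N}, and your argument is that parallel, with the equivalence $(1)\Leftrightarrow(2)$ mediated by the identities $\overline{N}=\mathfrak{c}(\bigwedge N)$ and ``$\mathfrak{c}_{L\smallsetminus S}(x)$ is $(L\smallsetminus S)$-nowhere dense iff $x$ is $(L\smallsetminus S)$-dense,'' and $(2)\Leftrightarrow(3)\Leftrightarrow(4)$ following from $\nu_T(x)\in T\cap\mathfrak{c}(x)$ and the recalled fact that $T\subseteq\mathfrak{o}(x)$ iff $\nu_T(x)=1$. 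No gaps.
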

For sublocales $F$ and $A$ of a locale $L$, we have that $$F\subseteq\cl_{F}(F\cap A)\; \Longleftrightarrow \;F=\cl_{F}(F\cap A) \;\Longleftrightarrow F\cap A \text{ is } F\text{-dense}.$$ As a result of this, we have the following observation regarding almost inaccessible sublocales.

\begin{obs}\label{obsinacc}
	Let $F$ be a sublocale of a locale $L$. The following statements are equivalent.
	\begin{enumerate}
		\item $F\in\mathcal{S}_{\Ainac}(F)$.
		\item $F=\cl_{F}(F\cap (L\smallsetminus \overline{N}))$ for every $(L\smallsetminus F)$-nowhere dense $N$.
		\item $F\cap (L\smallsetminus\overline{N})$ is $F$-dense for every $(L\smallsetminus F)$-nowhere dense $N$.
	\end{enumerate}
\end{obs}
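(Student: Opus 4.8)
The plan is to read off the Observation directly from the displayed chain of equivalences that precedes it, combined only with the definition of an almost inaccessible sublocale. By that definition, applied with $T = F$, the assertion $F \in \mathcal{S}_{\Ainac}(F)$ says precisely that $F \subseteq \cl_{F}\bigl(F \cap (L \smallsetminus \overline{N})\bigr)$ holds for every $(L\smallsetminus F)$-nowhere dense sublocale $N$. So I would fix one such $N$, put $A = L \smallsetminus \overline{N}$, and invoke the display to see that the three per-$N$ conditions $F \subseteq \cl_{F}(F \cap A)$, $\ F = \cl_{F}(F \cap A)$, and $F \cap A$ is $F$-dense are equivalent; quantifying over all $(L\smallsetminus F)$-nowhere dense $N$ then yields the equivalence of (1), (2), (3).

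It remains only to justify the display itself, which I would do as follows. Since $F \cap A \subseteq F$ and $F$ is a closed sublocale of itself (so $\cl_{F}(F) = F$), we always have $\cl_{F}(F \cap A) \subseteq \cl_{F}(F) = F$; hence the inclusion $F \subseteq \cl_{F}(F \cap A)$ forces the equality $F = \cl_{F}(F \cap A)$, while the converse implication is trivial. Finally, a sublocale of $F$ is $F$-dense exactly when its closure computed in $F$ is all of $F$, so $F = \cl_{F}(F \cap A)$ is by definition the statement that $F \cap A$ is $F$-dense. Chaining these three observations gives the display, and with it the Observation.

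There is essentially no obstacle here: the result is a formal unwinding of the definition of almost inaccessibility together with the elementary fact that a sublocale is dense in $F$ iff its closure in $F$ equals $F$. The only point meriting a word of care is bookkeeping of the quantifier: the equivalences above hold for each fixed $N$, so ``for every $(L\smallsetminus F)$-nowhere dense $N$, condition (1)-for-$N$'' is interchangeable with ``for every such $N$, condition (3)-for-$N$'', and this is what licenses the jump from the per-$N$ equivalence to the equivalence of the three global statements.
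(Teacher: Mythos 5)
Your proposal is correct and matches the paper's reasoning exactly: the paper also derives the Observation directly from the displayed chain $F\subseteq\cl_{F}(F\cap A)\Longleftrightarrow F=\cl_{F}(F\cap A)\Longleftrightarrow F\cap A\text{ is }F\text{-dense}$, applied with $A=L\smallsetminus\overline{N}$ and quantified over all $(L\smallsetminus F)$-nowhere dense $N$. Your justification of the display via $\cl_{F}(F\cap A)\subseteq\cl_{F}(F)=F$ is the standard argument the paper leaves implicit.
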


We give the following lemma which we shall use below.
\begin{lemma}\label{complalmost}
	Let $L$ be a locale, $N\in\mathcal{S}(L)$, $S$ a complemented sublocale of $L$ and $T\in\mathcal{S}(S)$. Then $T\subseteq \cl_{S}(S\cap \left(L\smallsetminus\overline{ N}\right))$ iff $T\cap \inte_{S}\left(S\cap \overline{N}\right)=\mathsf{O}.$
\end{lemma}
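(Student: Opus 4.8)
The plan is to reduce the stated biconditional to a single fact about a closed sublocale and its supplement. Put $C=\overline{S\cap(L\smallsetminus\overline{N})}$; this is a closed, hence complemented, sublocale of $L$, with supplement the open sublocale $C^{\#}=L\smallsetminus C$. First I would rewrite both sides of the claimed equivalence in terms of $C$. For the left-hand side: since the closed sublocales of $S$ are exactly the $S\cap\mathfrak{c}(x)$ for $x\in L$ and $T\subseteq S$, the closure of $S\cap(L\smallsetminus\overline{N})$ computed in $S$ equals $C\cap S$, so $T\subseteq\cl_{S}\!\left(S\cap(L\smallsetminus\overline{N})\right)$ iff $T\subseteq C$. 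For the right-hand side, I apply the displayed identity for $\Int_{A}(\overline{F}\cap A)$ established in the paragraph preceding Observation~\ref{obsndsubspace} with $A=S$ (which is complemented) and $F=N$; this gives $\inte_{S}\!\left(S\cap\overline{N}\right)=S\cap C^{\#}$, and then $T\subseteq S$ yields $T\cap\inte_{S}\!\left(S\cap\overline{N}\right)=T\cap C^{\#}$. Thus the lemma is equivalent to the assertion $T\subseteq C\iff T\cap C^{\#}=\mathsf{O}$.

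To finish I would prove this last equivalence, which in fact holds for an arbitrary closed sublocale. Write $C=\mathfrak{c}(c)$ where $c=\bigwedge\!\left(S\cap(L\smallsetminus\overline{N})\right)$, so $C^{\#}=\mathfrak{o}(c)$. The implication $T\subseteq C\Rightarrow T\cap C^{\#}=\mathsf{O}$ is immediate from $C\cap C^{\#}=\mathsf{O}$. For the converse, suppose $T\cap\mathfrak{o}(c)=\mathsf{O}$ and let $a\in T$ be arbitrary. Since $T$ is a sublocale and $a\in T$, the Heyting residual $c\rightarrow a$ lies in $T$; it also lies in $\mathfrak{o}(c)=\{c\rightarrow x:x\in L\}$, so $c\rightarrow a\in T\cap\mathfrak{o}(c)=\{1\}$, i.e. $c\rightarrow a=1$ and hence $c\leq a$. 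As $a\in T$ was arbitrary, $T\subseteq\mathfrak{c}(c)=C$, as required.

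The only steps needing care are the two reformulations in the first paragraph: that closure taken inside $S$ is the trace on $S$ of the closure taken in $L$ (used together with $T\subseteq S$), and that the interior formula preceding Observation~\ref{obsndsubspace} is applicable — which it is, precisely because $S$ is complemented. I expect this bookkeeping, rather than the final Heyting-operation argument, to be the main (and mild) obstacle; note in particular that no distributivity of binary meets over joins in $\mathcal{S}(L)$ is invoked, so there is no coframe-versus-frame subtlety to contend with.
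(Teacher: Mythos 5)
Your proposal is correct and follows essentially the same route as the paper: both reduce the two sides to statements about $C=\overline{S\cap(L\smallsetminus\overline{N})}$ via the interior identity preceding Observation~\ref{obsndsubspace} and the fact that $\cl_{S}$ is the trace of the closure in $L$, and then invoke the equivalence $T\subseteq C\iff T\cap C^{\#}=\mathsf{O}$ for the closed (hence complemented) sublocale $C$. The only difference is that you supply an explicit Heyting-operation proof of that last equivalence, which the paper treats as a known fact about complemented sublocales.
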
\begin{proof}
	We have that
	\begin{align*}
		T\subseteq \cl_{S}(S\cap \left(L\smallsetminus\overline{ N}\right))
		&\quad\Longleftrightarrow\quad
		T\subseteq\overline{S\cap \left(L\smallsetminus\overline{N}\right)}\\
		&\quad\Longleftrightarrow\quad
		T\cap \left(L\smallsetminus\overline{S\cap (L\smallsetminus\overline{N})}\right)=\mathsf{O}\\
		&\quad\Longleftrightarrow\quad
		T\cap S\cap \left(L\smallsetminus\overline{S\cap (L\smallsetminus\overline{N})}\right)=\mathsf{O}\\
		&\quad\Longleftrightarrow\quad
		T\cap \inte_{S}\left(S\cap \overline{N}\right)=\mathsf{O}
	\end{align*}which proves the result.
\end{proof}

The preceding lemma leads us to the following characterization of almost inaccessible sublocales of complemented sublocales. We only prove the equivalences $(2)\Longleftrightarrow(3)$.

\begin{proposition}\label{complementedremote}
	The following are equivalent for a complemented sublocale $S$ of a locale $L$ and $T\in \mathcal{S}(S)$.
	\begin{enumerate}
		\item $T$ is almost $S$-inaccessible.
		\item $T\cap \inte_{S}(S\cap\overline{N})=\mathsf{O}$ for each $(L\smallsetminus S)$-nowhere dense sublocale $N$.
	\end{enumerate} 
	In particular, if $S$ is closed, this is further equivalent to:
	\begin{enumerate}
		\item[3.] $a\rightarrow \left(\bigwedge S\right)\leq\bigwedge T$ for every $(L\smallsetminus S)$-dense $a$.
	\end{enumerate}
\end{proposition}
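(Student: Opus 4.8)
The plan is to treat the two equivalences separately, since $(1)\Longleftrightarrow(2)$ holds for any complemented $S$ while $(2)\Longleftrightarrow(3)$ needs closedness. For $(1)\Longleftrightarrow(2)$ there is essentially nothing to do beyond invoking Lemma \ref{complalmost}: for the complemented sublocale $S$ and a \emph{single} $N\in\mathcal{S}(L)$ that lemma says $T\subseteq\cl_S(S\cap(L\smallsetminus\overline{N}))$ holds iff $T\cap\inte_S(S\cap\overline{N})=\mathsf{O}$. Quantifying this biconditional over all $(L\smallsetminus S)$-nowhere dense $N$ turns the defining condition of almost $S$-inaccessibility (statement (1)) verbatim into statement (2); so the first equivalence is a one-line consequence of the lemma, and no further structure on $S$ is used.

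The substance is $(2)\Longleftrightarrow(3)$, where now $S$ is closed; write $s=\bigwedge S$, so $S=\mathfrak{c}(s)$ and $L\smallsetminus S=\mathfrak{o}(s)$, and put $t=\bigwedge T$. First I would reduce condition (2) to closed nowhere dense sublocales. Since $T\cap\inte_S(S\cap\overline{N})$ depends on $N$ only through $\overline{N}$, and $N$ has the same $L$-closure as its $(L\smallsetminus S)$-closure, it suffices to let $N$ range over $(L\smallsetminus S)$-closed nowhere dense sublocales. These are the $\mathfrak{c}_{\mathfrak{o}(s)}(x)=\mathfrak{o}(s)\cap\mathfrak{c}(x)$ with $x$ dense in $\mathfrak{o}(s)$, i.e. with $x$ an $(L\smallsetminus S)$-dense element; since $\bigwedge(\mathfrak{o}(s)\cap\mathfrak{c}(x))=x$, their $L$-closure is $\mathfrak{c}(x)$. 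Thus (2) becomes the demand that $T\cap\inte_S(S\cap\mathfrak{c}(x))=\mathsf{O}$ for every $(L\smallsetminus S)$-dense $x$.

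The computational heart is then the interior $\inte_S(S\cap\mathfrak{c}(x))$ taken inside the closed locale $S$, which as a poset is $\{u\in L:s\leq u\}$ with bottom $s$, meets inherited from $L$, and Heyting operation $c\rightarrow_S d=c\rightarrow d$ for $c,d\geq s$. Here $S\cap\mathfrak{c}(x)=\mathfrak{c}(s\vee x)=\mathfrak{c}_S(\nu_S(x))$ with $\nu_S(x)=s\vee x$, and transporting the identity $\Int(\mathfrak{c}(c))=\mathfrak{o}(c^{*})$ (from the \cite{FPP} formula recalled above) into the frame $S$ gives $\inte_S(S\cap\mathfrak{c}(x))=\mathfrak{o}_S\bigl((s\vee x)\rightarrow s\bigr)=\mathfrak{o}_S(x\rightarrow s)$, using $(s\vee x)\rightarrow s=(s\rightarrow s)\wedge(x\rightarrow s)=x\rightarrow s$. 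Since $T\subseteq S$, the meet $T\cap\mathfrak{o}_S(x\rightarrow s)=T\cap\mathfrak{o}(x\rightarrow s)$ is void precisely when $T\subseteq\mathfrak{c}(x\rightarrow s)$, i.e. when $x\rightarrow s\leq\bigwedge T=t$. Reading $x\rightarrow s$ as $x\rightarrow\bigwedge S$ and letting $x$ run over all $(L\smallsetminus S)$-dense elements recovers statement (3) exactly.

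The main obstacle I anticipate is the interior computation inside the closed sublocale $S$ together with the bookkeeping identifying $\overline{N}$ with $\mathfrak{c}(x)$ for $(L\smallsetminus S)$-dense $x$; one must be careful that the Heyting arrow and bottom used inside $S=\mathfrak{c}(s)$ are the ones inherited from $L$ on $\{u:s\leq u\}$, so that $(s\vee x)\rightarrow s$ collapses to $x\rightarrow s$ and the identity $\Int(\mathfrak{c}(c))=\mathfrak{o}(c^{*})$ may legitimately be applied within $S$. Once these points are settled, the equivalence drops out of the pseudocomplement arithmetic.
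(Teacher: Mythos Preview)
Your proposal is correct and follows essentially the same approach as the paper: $(1)\Longleftrightarrow(2)$ is Lemma~\ref{complalmost} quantified over all $(L\smallsetminus S)$-nowhere dense $N$, and for $(2)\Longleftrightarrow(3)$ both you and the paper reduce to $(L\smallsetminus S)$-dense elements $a$ and identify the threshold sublocale as $\mathfrak{c}(a\rightarrow\bigwedge S)$. The only cosmetic difference is that the paper passes back through Lemma~\ref{complalmost} to the closure side and computes $\overline{S\cap\mathfrak{o}(a)}=\mathfrak{c}\bigl(\bigwedge(S\cap\mathfrak{o}(a))\bigr)=\mathfrak{c}(a\rightarrow\bigwedge S)$, whereas you stay on the interior side and compute $\inte_S(\mathfrak{c}_S(s\vee a))=\mathfrak{o}_S((s\vee a)\rightarrow s)=\mathfrak{o}_S(a\rightarrow s)$ directly; these are the same identity read from opposite ends.
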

\begin{proof}
	$(2)\Longleftrightarrow(3)$: Let $a$ be $(L\smallsetminus S)$-dense. Then $\mathfrak{c}_{(L\smallsetminus S)}(a)$ is $(L\smallsetminus S)$-nowhere dense. By $(2)$, \begin{align*}
		T\cap\inte_{S}(S\cap\overline{\mathfrak{c}_{(L\smallsetminus S)}(a)})=\mathsf{O}&\quad\Longleftrightarrow\quad T\subseteq \cl_{S}\left(S\cap \left(L\smallsetminus\overline{\mathfrak{c}_{(L\smallsetminus S)}(a)}\right)\right)\text{ since }S\text{ is complemented}\\
		&\quad\Longleftrightarrow\quad T\subseteq \overline{S\cap \left(L\smallsetminus\overline{\mathfrak{c}_{(L\smallsetminus S)}(a)}\right)}\\
		&\quad\Longleftrightarrow\quad T\subseteq \overline{S\cap \left(L\smallsetminus\mathfrak{c}(a)\right)}\quad\text{since }\overline{\mathfrak{c}_{(L\smallsetminus S)}(a)}=\mathfrak{c}(a)\\
		&\quad\Longleftrightarrow\quad T\subseteq \mathfrak{c}\left(\bigwedge\left(S\cap \mathfrak{o}(a)\right)\right)\\
		&\quad\Longleftrightarrow\quad T\subseteq \mathfrak{c}\left(a\rightarrow\left(\bigwedge S\right)\right)\quad\text{since }S\text{ is closed}\\
		&\quad\Longleftrightarrow\quad \overline{T}\subseteq \mathfrak{c}\left(a\rightarrow\left(\bigwedge S\right)\right)\\
		&\quad\Longleftrightarrow\quad a\rightarrow\left(\bigwedge S\right)\leq \bigwedge T.
	\end{align*}Starting the above argument with an $(L\smallsetminus S)$-nowhere dense $N$ and using the fact that $N$ is $(L\smallsetminus S)$-nowhere dense if and only if $\bigwedge N$ is $(L\smallsetminus S)$-dense, gives the desired equivalence.
\end{proof}

Next, we collect into one proposition some results about inaccessible sublocales and almost inaccessible sublocales.

\begin{proposition}\label{almostinaccesprop}
	Let $L$ be a locale and $S\in \mathcal{S}(L)$. 
	\begin{enumerate}
		\item Every $S$-inaccessible sublocale is almost $S$-inaccessible. 
		\item If $A\in\mathcal{S}(L)$ is $S$-inaccessible (resp. almost $S$-inaccessible) and $\mathcal{S}(L)\ni B\subseteq A$, then $B$ is $S$-inaccessible (resp. almost $S$-inaccessible). 
		\item If $S$ is open in $L$, then $S$ is $S$-inaccessible. 
		\item $L$ is $L$-inaccessible and hence by (2) every sublocale of $L$ is $L$-inaccessible. 
		\item If $S$ is complemented, then every sublocale $T$ of $L\smallsetminus S$ which is open in $L$ is $(L\smallsetminus S)$-inaccessible. 
		\item A join of $S$-inaccessible (resp. almost $S$-inaccessible) sublocales is $S$-inaccessible (resp. almost $S$-inaccessible). 
	\end{enumerate}
\end{proposition}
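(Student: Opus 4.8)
The plan is to prove the six parts essentially in the order listed, relying on the characterization of $S$-inaccessibility in Proposition \ref{inacccharact}, on Lemma \ref{mndcmndlemma}, and on the Preliminaries facts that $\overline N=\mathfrak c(\bigwedge N)$, that closed sublocales are complemented (hence linear), and that $\mathfrak o_S(\nu_S(a))=S\cap\mathfrak o(a)$, $\mathfrak c_S(\nu_S(a))=S\cap\mathfrak c(a)$, with $S\subseteq\mathfrak o(a)$ iff $\nu_S(a)=1$. Parts (2), (3), (4) and (6) are short; (1) is a one-line deduction; the crux is (5), where the double supplement $L\smallsetminus(L\smallsetminus S)$ must be handled.

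For (2): if $A$ is ($S$- or almost $S$-) inaccessible then $A\in\mathcal S(S)$, so any $B\subseteq A$ lies in $\mathcal S(S)$ too, and monotonicity gives $B\cap\overline N\subseteq A\cap\overline N=\mathsf O$, resp.\ $B\subseteq A\subseteq\cl_S[S\cap(L\smallsetminus\overline N)]$. For (3): if $S=\mathfrak o(a)$ and $N$ is $(L\smallsetminus S)$-nowhere dense then $N\subseteq L\smallsetminus S=\mathfrak c(a)$, so $\overline N\subseteq\mathfrak c(a)$ and $S\cap\overline N\subseteq\mathfrak o(a)\cap\mathfrak c(a)=\mathsf O$; thus $S$ is $S$-inaccessible. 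Part (4) is the instance $S=L=\mathfrak o(0_L)$ of (3) followed by (2). For (6): a join of members of $\mathcal S(S)$ again lies in $\mathcal S(S)$ (it is the smallest sublocale of $L$ containing the family, and $S$ is one such); in the inaccessible case, since $\overline N$ is complemented, hence linear, $\bigl(\bigvee_iT_i\bigr)\cap\overline N=\bigvee_i(T_i\cap\overline N)=\mathsf O$; in the almost-inaccessible case $\cl_S[S\cap(L\smallsetminus\overline N)]$ is a sublocale of $S$ containing each $T_i$, hence their join.

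For (1): let $T$ be $S$-inaccessible and $N$ an $(L\smallsetminus S)$-nowhere dense sublocale. From $T\cap\overline N=T\cap\mathfrak c(\bigwedge N)=\mathfrak c_T(\nu_T(\bigwedge N))=\mathsf O$ we get $\nu_T(\bigwedge N)=1$, hence $T\subseteq\mathfrak o(\bigwedge N)=L\smallsetminus\overline N$. Combined with $T\subseteq S$ this gives $T\subseteq S\cap(L\smallsetminus\overline N)\subseteq\cl_S[S\cap(L\smallsetminus\overline N)]$, i.e.\ $T$ is almost $S$-inaccessible.

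The main obstacle is (5): the assertion ``$T$ is $(L\smallsetminus S)$-inaccessible'' tests $T$ against sublocales that are $\bigl(L\smallsetminus(L\smallsetminus S)\bigr)$-nowhere dense, and complementedness of $S$ is precisely what lets us replace $L\smallsetminus(L\smallsetminus S)$ by $S$. The plan: writing $T=\mathfrak o(t)\subseteq L\smallsetminus S$, the equality $S\cap\mathfrak o(t)=\mathfrak o_S(\nu_S(t))=\mathsf O$ forces $\nu_S(t)=0_S$, so $t\leq\bigwedge S$ and therefore $T=\mathfrak o(t)\subseteq\mathfrak o(\bigwedge S)=L\smallsetminus\overline S$. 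Since $L\smallsetminus\overline S$ is open in $L$, part (3) shows it is $(L\smallsetminus\overline S)$-inaccessible, i.e.\ $(L\smallsetminus\overline S)\cap\overline N=\mathsf O$ for every $\overline S$-nowhere dense $N$. Finally, if $N$ is $S$-nowhere dense then $N$ is $\overline S$-nowhere dense by Lemma \ref{mndcmndlemma}, so $T\cap\overline N\subseteq(L\smallsetminus\overline S)\cap\overline N=\mathsf O$; thus $T$ is $(L\smallsetminus S)$-inaccessible. I expect the only delicate points to be the supplement bookkeeping and the verification that an \emph{open} sublocale of $L$ sitting inside $L\smallsetminus S$ already sits inside the smaller sublocale $L\smallsetminus\overline S$ — which is exactly the inequality $t\leq\bigwedge S$.
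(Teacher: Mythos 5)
Your proposal is correct, and for parts (1), (2), (3), (4) and (6) it is essentially the paper's own argument: (1) uses $T\cap\overline N=\mathsf O\Rightarrow T\subseteq L\smallsetminus\overline N$ and then $T\subseteq S\cap(L\smallsetminus\overline N)\subseteq\cl_S(S\cap(L\smallsetminus\overline N))$; (3) uses $\overline N\subseteq L\smallsetminus S$ and complementedness; (4) is (3) plus (2); (6) uses linearity of the complemented sublocale $\overline N$. (You in fact do slightly more than the paper in (6), which only verifies the inaccessible case; your observation that $\cl_S[S\cap(L\smallsetminus\overline N)]$ contains the join because it contains each $T_i$ settles the almost-inaccessible case cleanly.) The only real divergence is (5). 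The paper argues directly: $T\subseteq L\smallsetminus S$ and $S$ complemented give $T\cap S=\mathsf O$, hence $T\cap N=\mathsf O$ for every $S$-nowhere dense $N\subseteq S$, and openness of $T$ upgrades this to $T\cap\overline N=\mathsf O$ (an open sublocale missing $N$ misses $\overline N$). You instead use openness of $T=\mathfrak o(t)$ at the front end, deducing $\nu_S(t)=0_S$, hence $t\leq\bigwedge S$ and $T\subseteq L\smallsetminus\overline S$, and then reduce to part (3) applied to the open sublocale $L\smallsetminus\overline S$ together with Lemma \ref{mndcmndlemma} to pass from $S$-nowhere density to $\overline S$-nowhere density. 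Both routes exploit the same fact (openness of $T$) and both are valid; the paper's is shorter, while yours has the small side benefit of exhibiting the stronger containment $T\subseteq L\smallsetminus\overline S$ and of recycling (3) rather than reproving a miss-the-closure step.
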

\begin{proof}
	(1) Let $T\in\mathcal{S}(S)$ be such that $T$ is $S$-inaccessible. Then $T\cap \overline{N}=\mathsf{O}$ for all $N\in\NdS(L\smallsetminus S)$, which implies that $T\subseteq L\smallsetminus\overline{N}$. Therefore $T\subseteq S\cap(L\smallsetminus\overline{N})\subseteq\cl_{S}(S\cap(L\smallsetminus\overline{N}))$ which proves the result.
	
	(2) Straightforward.
	
	(3) Assume that $S$ is open in $L$ and choose $N\in\NdS(L\smallsetminus S)$. It is clear that  $\overline{N}\subseteq L\smallsetminus S$ since $L\smallsetminus S$ is closed. Because $S$ is complemented, we have that $S\cap (L\smallsetminus S)=\mathsf{O}$ so that $S\cap \overline{N}=\mathsf{O}$. Thus $S\in \mathcal{S}_{\Inac}(S)$.
	
	(4) Because $L$ is open as a sublocale of itself, it follows from (3) that $L$ is $L$-inaccessible. Therefore, by (2), every sublocale of $L$ is $L$-inaccessible.
	
	(5) Let $T$ be a sublocale of $L\smallsetminus S$ which is open in $L$. We must show that $T\cap \overline{N}=\mathsf{O}$ for every $(L\smallsetminus(L\smallsetminus S))$-nowhere dense $N$, i.e., for every $S$-nowhere dense $N$. Since $T\subseteq L\smallsetminus S$, $T\cap S=\mathsf{O}$ because $S$ is complemented. So, for any $S$-nowhere dense $N$, $T\cap N=\mathsf{O}$. But $T$ is open in $L$ so $T\cap\overline{N}=\mathsf{O}$.
	
	(6) We only verify the case of $S$-inaccessible. Let $U_{i}\in\mathcal{S}_{\Inac}(S)$ (for $i\in I$) and choose $N\in\NdS(L\smallsetminus S)$. Since $\overline{N}$ is complemented, $\overline{N}\cap\bigvee U_{i}=\bigvee\left(\overline{N}\cap U_{i}\right)=\bigvee\{\mathsf{O}\}=\mathsf{O}$. Thus $\bigvee U_{i}\in \mathcal{S}_{\Inac}(S)$.
\end{proof}

\begin{remark}
	We note from Proposition \ref{almostinaccesprop}(4) that since every sublocale $S$ of a locale $L$ is a locale in its own right, it is therefore $S$-inaccessible as a sublocale of itself. However, in this paper, the notion $S\in \mathcal{S}_{\Inac}(S)$ for $S\in\mathcal{S}(L)$, read as $S$ is inaccessible as a sublocale of $L$ with respect to itself, shall mean $S\cap\overline{N}=\mathsf{O}$ for every $(L\smallsetminus S)$-nowhere dense sublocale $N$. This also applies to $S\in\mathcal{S}_{\Ainac}(S)$. 
\end{remark}

We note the following example.
\begin{example}\label{inacex}
	In a completely regular locale $L$, a point $p$ of $L$ is $\mathfrak{c}(p)$-inaccessible (resp. almost $\mathfrak{c}(p)$-inaccessible) if and only if $\mathfrak{c}(p)$ is $\mathfrak{c}(p)$-inaccessible (resp. almost $\mathfrak{c}(p)$-inaccessible).

\end{example}

	We give the following theorem in which some of its statements prepare us for a relationship between maximal nowhere density and remoteness in Proposition \ref{remoteandmaximal}. 
	
	\begin{theorem} \label{maximal}
		Let $L$ be a locale and $F$ be a non-void and closed nowhere dense sublocale of $L$. Then each of the following statements holds.
		\begin{enumerate}
			\item  If $F\in\mathcal{S}_{\Ainac}(F)$, then $F$ is maximal nowhere dense in $L$.
			\item If $L$ is compact, then $F$ is maximal nowhere dense in $L$ implies that there is $x\in F$ such that $x\notin\Int_{F}(\overline{ N}\cap F)$ for every $(L\smallsetminus F)$-nowhere dense $N$.
			\item $F$ is homogeneous maximal nowhere dense implies that every sublocale of $F$ is almost $F$-inaccessible. 
		\end{enumerate}
	\end{theorem}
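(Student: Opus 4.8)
The plan is to obtain (1) and (3) from the characterizations already assembled in Sections 2 and 4, and to obtain (2) by a compactness argument that manufactures the required point. Throughout write $F=\mathfrak{c}(f)$, so that $f=\bigwedge F$ is dense and $L\smallsetminus F=\mathfrak{o}(f)$ is a \emph{dense} open sublocale; hence $\mathfrak{B}(L\smallsetminus F)=\mathfrak{B}L$, and a sublocale of $L\smallsetminus F$ is $(L\smallsetminus F)$-nowhere dense exactly when it is nowhere dense in $L$. For (1): Observation \ref{obsinacc} says that $F\in\mathcal{S}_{\Ainac}(F)$ is equivalent to $F\cap(L\smallsetminus\overline{N})$ being $F$-dense for every $(L\smallsetminus F)$-nowhere dense $N$; as $F\neq\mathsf{O}$, an $F$-dense sublocale is non-void, so $F\cap(L\smallsetminus\overline{N})\neq\mathsf{O}$ for all $N\in\NdS(L\smallsetminus F)$, and Proposition \ref{mndprop}(2) gives that $F$ is maximal nowhere dense in $L$.

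For (3): By Proposition \ref{almostinaccesprop}(2) it suffices to show that $F$ itself is almost $F$-inaccessible, which by Lemma \ref{complalmost} (with $S=T=F$) amounts to $\Int_{F}(\overline{N}\cap F)=\mathsf{O}$ for every $(L\smallsetminus F)$-nowhere dense $N$. Suppose not: $\Int_{F}(\overline{N}\cap F)\neq\mathsf{O}$ for some such $N$. Its closure in $F$ is a non-void $F$-regular-closed sublocale $R$, which is maximal nowhere dense in $L$ because $F$ is homogeneous maximal nowhere dense. But $R$ is closed in $L$, say $R=\mathfrak{c}(r)$, with $f\leq r$ (since $R\subseteq F$) and $b:=\bigwedge N\leq r$ (since $R\subseteq\overline{N}\cap F\subseteq\overline{N}=\mathfrak{c}(b)$); moreover $b\in\mathfrak{o}(f)$ because $N\subseteq\mathfrak{o}(f)$ and sublocales are closed under meets, so $f\to b=b$ and therefore $r\to b\leq f\to b=b\leq r\to b$, i.e. $r\to b=b$. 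Since $r^{*\mathfrak{c}(b)}=r\to b=b$, the element $r$ is $\mathfrak{c}(b)$-dense, so $R=\mathfrak{c}(r)$ is nowhere dense in the closed nowhere dense sublocale $\overline{N}$ of $L$, contradicting maximal nowhere density of $R$ by Proposition \ref{mndcmnd}. Hence no such $N$ exists.

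For (2), the substantive part, write each open sublocale $\Int_{F}(\overline{N}\cap F)$ of $F$ as $F\cap\mathfrak{o}(u_{N})$ and set $u=\bigvee_{N}u_{N}$, so that $G:=\bigvee\{\Int_{F}(\overline{N}\cap F):N\in\NdS(L\smallsetminus F)\}=F\cap\mathfrak{o}(u)$. It suffices to prove $f\vee u\neq 1$: for then $\mathfrak{c}(f\vee u)$ is a non-void closed sublocale of $L$ contained in $F$, hence compact since $L$ is, hence has a point $x$, and any prime $x\geq f\vee u\geq u_{N}$ satisfies $x\notin\mathfrak{o}(u_{N})$ and therefore $x\notin\Int_{F}(\overline{N}\cap F)$ for every $N$, which is exactly what (2) asks for. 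So assume $f\vee u=1$, i.e. $G=F$. Since $F$ is closed in $L$ it is compact, so from $F=\bigvee_{N}\Int_{F}(\overline{N}\cap F)$ finitely many summands already join to $F$, whence $F=\bigvee_{i=1}^{k}\Int_{F}(\overline{N_{i}}\cap F)\subseteq\bigvee_{i=1}^{k}\overline{N_{i}}=\overline{N_{1}\vee\cdots\vee N_{k}}=\overline{N}$, where $N:=N_{1}\vee\cdots\vee N_{k}$ is $(L\smallsetminus F)$-nowhere dense and so nowhere dense in $L$. Then $b:=\bigwedge N=\bigwedge\overline{N}\leq\bigwedge F$ is dense (as $\overline{N}$ is nowhere dense) and $b\in\mathfrak{o}(f)$ gives $f\to b=b$, i.e. $(\bigwedge F)^{*\mathfrak{c}(b)}=b$ with $b$ dense and $b\leq\bigwedge F$; this contradicts Proposition \ref{mndcmnd}(5). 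Hence $f\vee u\neq 1$, and (2) follows.

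The step I expect to be the crux is this last one: proving that the open sublocale $G$ of $F$ cobbled together from the interiors $\Int_{F}(\overline{N}\cap F)$ cannot be all of $F$, and then extracting a point of the non-void leftover $\mathfrak{c}(f\vee u)$. This is the only place where compactness of $L$ is used, via the finite-join argument and the standard fact that a non-void compact locale has a point; parts (1) and (3) require no compactness.
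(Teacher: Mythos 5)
Your proof is correct, and its overall architecture matches the paper's: part (1) rests on almost inaccessibility forcing $F\cap(L\smallsetminus\overline{N})$ to be non-void, part (2) on compactness producing a finite family of nowhere dense $N_{i}$ whose closures cover $F$, and part (3) on the closure of $\Int_{F}(\overline{N}\cap F)$ being a non-void $F$-regular-closed (hence maximal nowhere dense) sublocale that is nevertheless nowhere dense in $\overline{N}$. The differences are in the plumbing, and they mostly work in your favour. For (1) you simply quote Observation \ref{obsinacc} and Proposition \ref{mndprop}(2), whereas the paper re-derives the content of Proposition \ref{mndprop}(2) ad hoc for the particular sublocale $\mathfrak{c}(c)\smallsetminus\mathfrak{c}(b)$; your route is shorter and makes the dependency explicit. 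For (2) and (3) the paper verifies ``nowhere dense in $\overline{N}$'' by a supplement computation of the form $A\cap\left(L\smallsetminus\overline{A\cap(L\smallsetminus\overline{F})}\right)=\mathsf{O}$, while you verify it element-wise via Proposition \ref{mndcmnd}(5), computing $r\rightarrow b=b$ from $b\in\mathfrak{o}(f)$ and $f\leq r$; both are sound, and your reduction of (3) to the single case $T=F$ via Proposition \ref{almostinaccesprop}(2) is a clean structuring the paper does implicitly. Finally, in (2) the paper argues purely by contradiction from a pointwise selection $x\mapsto N_{x}$, whereas you aggregate all the interiors into one open sublocale $F\cap\mathfrak{o}(u)$, show $f\vee u\neq 1$, and then exhibit a concrete witness; the extra appeal to the fact that a non-void compact locale has a point is standard but actually unnecessary, since any element $x\neq 1$ of $\mathfrak{c}(f\vee u)$ (for instance $x=f\vee u$ itself) already lies in $F$ and outside every $F\cap\mathfrak{o}(u_{N})$.
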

	
	\begin{proof}
		(1) Let $F=\mathfrak{c}(b)$ for some $b\in L$ and choose $\mathfrak{c}(c)\in \NdS(L)$ such that $F\subseteq\mathfrak{c}(c)$. We show that $F$ is not nowhere dense in $\mathfrak{c}(c)$. Observe that $\mathfrak{c}(c)\smallsetminus\mathfrak{c}(b)\in\NdS(\mathfrak{o}(b))$. Indeed, if $$\mathfrak{o}(x)\cap\mathfrak{o}(b)\subseteq\overline{\mathfrak{c}(c)\smallsetminus\mathfrak{c}(b)}^{\mathfrak{o}(b)}=\overline{\mathfrak{c}(c)\cap\mathfrak{o}(b)}\cap\mathfrak{o}(b)=\mathfrak{c}(c)\cap\mathfrak{o}(b),$$ then $\mathfrak{o}(x)\cap\mathfrak{o}(b)=\mathfrak{o}(x\wedge b)\subseteq\mathfrak{c}(c)$. But $\mathfrak{c}(c)\in\NdS(L)$, so $\mathfrak{o}(x\wedge b)=\mathsf{O}$. Thus $\mathfrak{c}(c)\smallsetminus\mathfrak{c}(b)\in\NdS(\mathfrak{o}(b))$.
		
		Since $\mathfrak{c}(b)\in\mathcal{S}_{\Ainac}(\mathfrak{c}(b))$, we have that $$\mathfrak{c}(b)=\cl_{\mathfrak{c}(b)}\left(\mathfrak{c}(b)\cap\left(L\smallsetminus\overline{(\mathfrak{c}(c)\smallsetminus \mathfrak{c}(b))}\right)\right)=\cl_{\mathfrak{c}(b)}\left(\mathfrak{c}(b)\smallsetminus\overline{(\mathfrak{c}(c)\smallsetminus \mathfrak{c}(b))}\right).$$
		Because $\mathfrak{c}(b)$ is non-void, $\mathfrak{c}(b)\smallsetminus\overline{(\mathfrak{c}(c)\smallsetminus \mathfrak{c}(b))}\neq\mathsf{O}$. Since $\mathfrak{c}(b)\subseteq\mathfrak{c}(c)$,  $\mathfrak{c}(c)\smallsetminus\overline{(\mathfrak{c}(c)\smallsetminus \mathfrak{c}(b))}\neq\mathsf{O}$. 
		
		We must have that $\mathfrak{c}(b)\notin\NdS(\mathfrak{c}(c))$, otherwise 
		\begin{align*}
			\mathsf{O}&\quad=\quad\Int_{\mathfrak{c}(c)}\left(\overline{\mathfrak{c}(b)}^{\mathfrak{c}(c)}\right)\\
			&\quad =\quad
			\mathfrak{c}(c)\cap \left(L\smallsetminus \overline{(\mathfrak{c}(c)\cap L\smallsetminus \overline{\mathfrak{c}(b)})}\right)\\
			&\quad =\quad
			\mathfrak{c}(c)\cap \left(L\smallsetminus \overline{(\mathfrak{c}(c)\smallsetminus \mathfrak{c}(b))}\right)\\
			&\quad =\quad
			\mathfrak{c}(c)\smallsetminus\overline{(\mathfrak{c}(c)\smallsetminus \mathfrak{c}(b))}
		\end{align*}
		which is not possible. So there is no closed $K\in\NdS(L)$ such that $F\in\NdS(K)$. Thus $F$ is maximal nowhere dense in $L$.

		(2) 	Assume that $L$ is compact, $F=\mathfrak{c}(b)$ is maximal nowhere dense in $L$ and suppose that for each $x\in F$, there is an $\mathfrak{o}(b)$-nowhere dense $N_{x}$ such that $x\in\Int_{\mathfrak{c}(b)}(\mathfrak{c}(b)\cap\overline{ N_{x}})$. Set $\Int_{\mathfrak{c}(b)}(\mathfrak{c}(b)\cap\overline{ N_{x}})=\mathfrak{o}(a_{x})\cap \mathfrak{c}(b)$. Then $$\mathfrak{c}(b)\subseteq\bigvee_{x\in \mathfrak{c}(b)}(\mathfrak{o}(a_{x})\cap\mathfrak{c}(b))\subseteq\mathfrak{o}\left(\bigvee_{x\in \mathfrak{c}(b)}a_{x}\right).$$ Therefore $b\vee \left(\bigvee_{x\in \mathfrak{c}(b)}a_{x}\right)=1$. By compactness of $L$, there is a finite set $B\subseteq \mathfrak{c}(b)$ such that $b\vee\left(\bigvee_{x\in B}a_{x}\right)=1.$ We get that
		\begin{align*}
			\mathfrak{c}(b)
			&\quad \subseteq\quad
			\mathfrak{c}(b)\cap\mathfrak{o}\left(\bigvee_{x\in B}a_{x}\right)\\
			&\quad =\quad
			\mathfrak{c}(b)\cap \bigvee_{x\in B}\mathfrak{o}(a_{x})\\
			&\quad =\quad
			\bigvee_{x\in B}(\mathfrak{c}(b)\cap\mathfrak{o}(a_{x}))\\
			&\quad =\quad
			\bigvee_{x\in B}\left(\Int_{\mathfrak{c}(b)}(\mathfrak{c}(b)\cap\overline{ N_{x}})\right)\\
			&\quad \subseteq\quad
			\bigvee_{x\in B}\left(\mathfrak{c}(b)\cap\overline{ N_{x}}\right)\\
			&\quad =\quad
			\mathfrak{c}(b)\cap\bigvee_{x\in B}\overline{N_{x}}\\
			&\quad \subseteq\quad
			\bigvee_{x\in B}\overline{N_{x}}=\overline{ \bigvee_{x\in B}N_{x}}.
		\end{align*} 
		
		Observe that $N_{x}\in \NdS(L)$. This is so because $N_{x}$ is nowhere dense in a dense sublocale $\mathfrak{o}(b)$ of $L$ making it nowhere dense in $L$. Therefore $\overline{N_{x}}\in\NdS(L)$. Since finite joins of closed nowhere dense sublocales are nowhere dense, $\bigvee_{x\in B}\overline{N_{x}}=\overline{\bigvee_{x\in B}N_{x}}$ is nowhere dense in $L$.  We show that $\mathfrak{c}(b)$ is nowhere dense in $\overline{ \bigvee_{x\in B}N_{x}}$ which will contradict that $\mathfrak{c}(b)$ is maximal nowhere dense in $L$.

		Set $A=\overline{ \bigvee_{x\in B}N_{x}}$. Observe that
		\begin{align*}
			\Int_{A}\left(\overline{\mathfrak{c}(b)}^{A}\right)
			&\quad =\quad
			A\cap \left(L\smallsetminus \overline{(A\cap L\smallsetminus \overline{\mathfrak{c}(b)})}\right)\\
			&\quad =\quad
			A\cap \left(L\smallsetminus \overline{(A\cap \mathfrak{o}(b)}\right)\\
			&\quad =\quad
			A\cap \left(L\smallsetminus \overline{\left(\overline{\bigvee_{x\in B}N_{x}}\cap \mathfrak{o}(b)\right)}\right)\\
			&\quad \subseteq\quad
			A\cap \left(L\smallsetminus \overline{\left(\left(\bigvee_{x\in B}N_{x}\right)\cap \mathfrak{o}(b)\right)}\right)\\
			&\quad =\quad
			A\cap \left(L\smallsetminus \overline{\bigvee_{x\in B}N_{x}}\right)\quad \text{since}\quad \bigvee_{x\in B}N_{x}\subseteq \mathfrak{o}(b)\\
			&\quad =\quad
			\mathsf{O}.
		\end{align*} 
		Thus $\mathfrak{c}(b)$ is nowhere dense in $A$ which is a contradiction.

		(3) Suppose that there is $B\in\mathcal{S}(F)$ such that $B\notin\mathcal{S}_{\Ainac}(F)$. Then, by Proposition \ref{complementedremote}, $B\cap\Int_{F}(\overline{ N}\cap F)\neq\mathsf{O}$ for some $N\in \NdS(L\smallsetminus F)$. We get that $\Int_{F}(\overline{ N}\cap F)\neq\mathsf{O}$. Set $A=\Int_{F}(\overline{ N}\cap F)$. Then $\overline{A}^{F}=\overline{A}$ is a non-void $F$-regular-closed sublocale. Since $F$ is homogeneous maximal nowhere dense, $\overline{A}$ is maximal nowhere dense in $L$. We show that $\overline{A}$ is nowhere dense in $\overline{ N}$ which will contradict that it is a maximal nowhere dense sublocale. It is clear that $\overline{A}\subseteq \overline{ N}$. Furthermore, observe that 
		\begin{align*}
			\Int_{\overline{ N}}\left(\overline{A}^{\overline{N}}\right)
			&\quad =\quad
			\Int_{\overline{ N}}(\overline{A})\\
			&\quad =\quad
			\overline{ N} \cap \left(L\smallsetminus\overline{\overline{ N}\cap L\smallsetminus \overline{A}}\right)\\
			&\quad \subseteq\quad
			\overline{ N} \cap \left(L\smallsetminus\overline{\overline{ N}\cap L\smallsetminus F}\right)\quad \text{since}\quad\overline{A}\subseteq F\\
			&\quad \subseteq\quad
			\overline{ N} \cap \left(L\smallsetminus\overline{N\cap L\smallsetminus F}\right)\\
			&\quad =\quad
			\overline{ N} \cap \left(L\smallsetminus\overline{N}\right)\quad \text{since}\quad N\subseteq L\smallsetminus F\\
			&\quad =\quad
			\mathsf{O}.
		\end{align*}
		Thus $\overline{A}$ is nowhere dense in $\overline{ N}$ which is not possible. Hence every sublocale of $F$ belongs to $\mathcal{S}_{\Inac}(F)$.
	\end{proof}
	If we consider $F$-clopen sublocales, we get the following result.
	\begin{proposition}\label{hmndclopen}
		Let $F$ be a non-void nowhere dense sublocale of $L$. If $F$ is homogeneous maximal nowhere dense, then each $F$-clopen sublocale is almost inaccessible as a sublocale of $L$ with respect to itself.
	\end{proposition}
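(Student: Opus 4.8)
The plan is to reduce this to Theorem~\ref{maximal}(3), applied not to $F$ itself but to the clopen sublocale in question. Let $C$ be an $F$-clopen sublocale; if $C=\mathsf O$ there is nothing to prove, so assume $C$ is non-void. First I would record two facts about $C$ as a sublocale of $L$: it is closed (being closed in the closed sublocale $F$, it is closed in $L$), and it is nowhere dense (being a sublocale of the nowhere dense sublocale $F$, and every sublocale of a nowhere dense sublocale is nowhere dense). Next I would observe that $C$ is $F$-regular-closed: since $C$ is open in $F$ it has the form $\mathfrak o_F(a)$ for some $a\in F$, and since $C$ is also closed in $F$ we have $C=\overline{C}^{\,F}=\overline{\mathfrak o_F(a)}^{\,F}$, which displays $C$ as the $F$-closure of an open sublocale of $F$.

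Being a non-void $F$-regular-closed sublocale of the homogeneous maximal nowhere dense sublocale $F$, the result that homogeneous maximal nowhere density is regular-closed hereditary (the proposition preceding Proposition~\ref{smndandmnd}) shows that $C$ is itself homogeneous maximal nowhere dense. Now I would invoke Theorem~\ref{maximal}(3) with $C$ in the role of $F$: since $C$ is a non-void closed nowhere dense sublocale that is homogeneous maximal nowhere dense, every sublocale of $C$ is almost $C$-inaccessible. In particular $C$ itself is, i.e. $C\in\mathcal S_{\Ainac}(C)$; unwinding this via Observation~\ref{obsinacc} (and the remark following Proposition~\ref{almostinaccesprop}) says precisely that $C\subseteq\cl_C\!\left(C\cap(L\smallsetminus\overline{N})\right)$ for every $(L\smallsetminus C)$-nowhere dense $N$, which is the assertion.

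The only substantive point is the passage from $F$ to $C$. Applying Theorem~\ref{maximal}(3) directly to $F$ would only yield that $C$ is almost $F$-inaccessible, whereas the claim is about almost $C$-inaccessibility — and here both the relevant class of test sublocales (those $(L\smallsetminus C)$-nowhere dense, not $(L\smallsetminus F)$-nowhere dense) and the ambient closure operator ($\cl_C$ rather than $\cl_F$) change. This is exactly why the regular-closed-hereditary property is needed: it transplants the homogeneous maximal nowhere density hypothesis onto $C$ before Theorem~\ref{maximal}(3) is applied. Apart from this, everything is a routine chaining of earlier results, so I anticipate no real difficulty.
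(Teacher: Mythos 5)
Your proof is correct, and it takes a genuinely different route from the paper's. The paper argues by contradiction: assuming some $(L\smallsetminus A)$-nowhere dense $N$ witnesses the failure of almost $A$-inaccessibility, it uses the $F$-openness of the clopen sublocale $A$ to exhibit $\overline{\Int_{A}(\overline{N}\cap A)}\cap F$ as a non-void $F$-regular-closed sublocale, invokes homogeneity of $F$ to make it maximal nowhere dense, and then re-runs the nowhere-dense-in-$\overline{N}$ computation from the end of the proof of Theorem~\ref{maximal}(3) (this time using the $F$-closedness of $A$) to reach a contradiction. You instead use the clopen hypothesis once, up front, to observe that $C=\overline{\mathfrak{o}_{F}(a)}^{F}$ is $F$-regular-closed, transfer homogeneous maximal nowhere density to $C$ via the regular-closed-hereditary proposition, and then apply Theorem~\ref{maximal}(3) directly to $C$; your closing remark correctly identifies that the change of base from $F$ to $C$ is the crux, and that it is precisely the hereditary property that effects it. Your argument is more modular (no computation is repeated), and it in fact yields the formally stronger conclusion that \emph{every} sublocale of $C$ is almost $C$-inaccessible; the paper's version is more self-contained, producing the contradicting sublocale explicitly without routing through the hereditary proposition. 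Both are valid; the only hypotheses you rely on implicitly --- that $F$ is closed (forced by Definition~\ref{hmnd}) and that closed-in-closed and nowhere-dense-in-nowhere-dense pass to $L$ --- are all available in the paper.
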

	\begin{proof}
		Let $A$ be an $F$-clopen sublocale and assume that $A\cap \Int_{A}(\overline{N}\cap A)\neq\mathsf{O}$ for some $N\in\NdS(L\smallsetminus A)$. Then $\Int_{A}(\overline{N}\cap A)$ is $F$-open so that $\overline{\Int_{A}(\overline{N}\cap A)}\cap F=\overline{\Int_{A}(\overline{N})}$ is a non-void regular-closed sublocale of $F$. By hypothesis, $\overline{\Int_{A}(\overline{N}\cap A)}$ is maximal nowhere dense. Following the argument used in last part of the proof of Theorem \ref{maximal}(3) and using the fact that $A$ is $F$-closed, we get that $\overline{\Int_{A}(\overline{N}\cap A)}$ is nowhere dense in $\overline{N}$ which is not possible.
	\end{proof}
	We note the following example.
	\begin{example}\label{exmnd}
		(1) If $X$ is a Hausdorff space with no isolated point, then every one-point sublocale of $\mathfrak{O}X$ which is almost inaccessible as a sublocale of $\mathfrak{O}X$ with respect to itself is maximal nowhere dense.
		To see this, it suffices to show that such sublocales are non-void closed nowhere dense in $\mathfrak{O}X$. Since every Hausdorff space is sober, each point $p$ of $\mathfrak{O}X$ is of the form $p=X\smallsetminus\overline{\{x\}}$ for some $x\in X$. Applying Hausdorffness again gives $p=X\smallsetminus\{x\}$ which is open and dense in $X$, making $\{x\}$ closed nowhere dense in $X$. It follows from \cite[Lemma 2.11.]{N} that $\widetilde{\{x\}}$ is closed and nowhere dense in $\mathfrak{O}X$. But $\widetilde{\{x\}}=\{X\smallsetminus\{x\},1_{\mathfrak{O}X}\}=\{p,1_{\mathfrak{O}X}\}$, so the one-point sublocale $\{p,1_{\mathfrak{O}X}\}$ is non-void closed nowhere dense. Now, if such a one-point sublocale $\{p,1_{\mathfrak{O}X}\}$ is an almost inaccessible sublocale of $L$ with respect to itself,  it follows from Proposition \ref{maximal}(1) that $\{p,1_{\mathfrak{O}X}\}$ is maximal nowhere dense.
		
		(2) The sublocales described in (1) are homogeneous maximal nowhere dense. This is so because for any point $p\in \mathfrak{O}X$, $\{p,1_{\mathfrak{O}X}\}$ is the only non-void sublocale contained in $\{p,1_{\mathfrak{O}X}\}$. Therefore all non-void $\{p,1_{\mathfrak{O}X}\}$-regular-closed sublocales are maximal nowhere dense in $\mathfrak{O}X$. 
	\end{example}

	We include the following result where we make use of Theorem \ref{maximal}(3) to show that a homogeneous maximal nowhere dense sublocale is regular-closed in every complemented nowhere dense sublocale containing it.
	
	\begin{proposition}
		Let $L$ be a locale and $F$ a non-void closed nowhere dense sublocale of $L$. If $F$ is homogeneous maximal nowhere dense and $F\subseteq A$, where $A$ is a complemented nowhere dense sublocale of $L$, then $F$ is an $A$-regular-closed sublocale.
	\end{proposition}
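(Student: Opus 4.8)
The plan is to exhibit $F$ as the $A$-closure of its $A$-interior $\Int_A(F)$. Since $\Int_A(F)$ is automatically $A$-open, this will prove $F$ is $A$-regular-closed. Moreover, because $F$ is closed in $L$ and $F\subseteq A$, it is $A$-closed, so $\cl_A(\Int_A(F))\subseteq\cl_A(F)=\overline F\cap A=F$ holds for free; the whole content is the reverse inclusion $F\subseteq\cl_A(\Int_A(F))$, which is where homogeneous maximal nowhere density enters. First I would produce the right nowhere dense sublocale to feed into that hypothesis: since $F$ is closed and nowhere dense, $L\smallsetminus F=L\smallsetminus\overline F$ is dense, hence $\mathfrak B(L\smallsetminus F)=\mathfrak B L$. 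Putting $N:=A\cap(L\smallsetminus F)$, we have
\[
N\cap\mathfrak B(L\smallsetminus F)=A\cap(L\smallsetminus F)\cap\mathfrak B L\subseteq A\cap\mathfrak B L=\mathsf O ,
\]
the last equality because $A$ is nowhere dense in $L$; so $N$ is $(L\smallsetminus F)$-nowhere dense.

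Next I would identify $F\cap(L\smallsetminus\overline N)$ with $\Int_A(F)$. Applying the interior formula for complemented sublocales displayed just before Observation~\ref{obsndsubspace} to the complemented sublocale $A$ and to $F$ (which, being closed and contained in $A$, satisfies $\overline F\cap A=F$) gives
\[
\Int_A(F)=A\cap\bigl(L\smallsetminus\overline{A\cap(L\smallsetminus F)}\bigr)=A\cap(L\smallsetminus\overline N).
\]
Since $F\subseteq A$ and $\Int_A(F)\subseteq F$, intersecting with $F$ yields $F\cap(L\smallsetminus\overline N)=F\cap\Int_A(F)=\Int_A(F)$.

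Finally I would invoke homogeneity. Because $F$ is homogeneous maximal nowhere dense, Theorem~\ref{maximal}(3) gives that every sublocale of $F$ is almost $F$-inaccessible; in particular $F\in\mathcal S_{\Ainac}(F)$. By Observation~\ref{obsinacc} applied to the $(L\smallsetminus F)$-nowhere dense sublocale $N$ from the first step,
\[
F=\cl_F\bigl(F\cap(L\smallsetminus\overline N)\bigr)=\cl_F\bigl(\Int_A(F)\bigr).
\]
For any $S\subseteq F$ one has $\cl_F(S)=\overline S\cap F\subseteq\overline S\cap A=\cl_A(S)$ because $F\subseteq A$, so $F=\cl_F(\Int_A(F))\subseteq\cl_A(\Int_A(F))$. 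Combined with $\cl_A(\Int_A(F))\subseteq\cl_A(F)=\overline F\cap A=F$ (as $F$ is closed), this gives $F=\cl_A(\Int_A(F))$, exhibiting $F$ as the $A$-closure of the $A$-open sublocale $\Int_A(F)$, i.e.\ $F$ is $A$-regular-closed.

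The main obstacle is purely organisational: one must carefully keep apart closures and interiors taken in $L$, in $A$, and in $F$, and in particular check that $N$ is nowhere dense \emph{relative to $L\smallsetminus F$} rather than merely relative to $L$ — for which the density of $L\smallsetminus F$, hence $\mathfrak B(L\smallsetminus F)=\mathfrak B L$, is the crucial ingredient. Once the complemented-interior formula and Theorem~\ref{maximal}(3) are in hand, no further computation is required.
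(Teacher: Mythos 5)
Your proof is correct and rests on exactly the same three ingredients as the paper's: the complemented-interior formula preceding Observation \ref{obsndsubspace}, the fact that $A\smallsetminus F$ is nowhere dense in $L\smallsetminus F$, and Theorem \ref{maximal}(3). The only difference is organisational: you run the argument forward, applying almost $F$-inaccessibility of $F$ itself to get $F=\cl_{F}(\Int_{A}(F))$ directly, whereas the paper argues by contradiction by exhibiting $F\smallsetminus\overline{\Int_{A}(F)}$ as a sublocale of $F$ that fails to be almost $F$-inaccessible (and your Booleanization argument for the relative nowhere density of $A\smallsetminus F$ is a touch cleaner than the paper's open-sublocale computation).
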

	\begin{proof}
		Assume that $F\neq \overline{\Int_{A}(F)}$. Because it is always true that $\overline{\inte_{A}(F)}\subseteq F$, this assumption says that $F\nsubseteq\overline{\inte_{A}(F)}$. Therefore the $F$-open sublocale $F\smallsetminus \overline{\Int_{A}(F)}=F\cap \left(L\smallsetminus\overline{\inte_{A}(F)}\right)$ is non-void. Also, $F\smallsetminus \overline{\Int_{A}(F)}\subseteq \overline{A\smallsetminus F}$. Indeed, 
		\begin{align*}
			F\smallsetminus\overline{\Int_{A}(F)}
			&\quad \subseteq\quad
			F\cap (L\smallsetminus \Int_{A}(F))\\
			&\quad =\quad
			F\cap \left(L\smallsetminus\left(A\cap (L\smallsetminus\overline{A\cap (L\smallsetminus F)})\right)\right)\\
			&\quad =\quad
			F\cap \left((L\smallsetminus A)\vee (L\smallsetminus (L\smallsetminus\overline{A\cap (L\smallsetminus F)}))\right)\\
			&\quad =\quad
			F\cap \left((L\smallsetminus A)\vee \overline{A\cap (L\smallsetminus F)}\right)\\
			&\quad =\quad
			(F\cap (L\smallsetminus A))\vee (F\cap\overline{A\cap (L\smallsetminus F)})\\
			&\quad =\quad
			\mathsf{O}\vee (F\cap\overline{A\cap (L\smallsetminus F)})\ \text{since}\ A\ \text{is complemented and }F\subseteq A\\
			&\quad =\quad
			F\cap\overline{A\cap (L\smallsetminus F)}\\
			&\quad\subseteq\quad\overline{A\cap (L\smallsetminus F)}=\overline{A\smallsetminus F}.
		\end{align*}
		This makes $\left(F\smallsetminus\overline{\Int_{A}(F)}\right)\cap \inte_{F}(F\cap\overline{A\smallsetminus F})\neq\mathsf{O}$.
		Observe that $A\smallsetminus F\in\NdS(L\smallsetminus F)$. To see this, let $U$ be an open sublocale of $L\smallsetminus F$ contained in $\overline{A\smallsetminus F}^{(L\smallsetminus F)}=\overline{A\smallsetminus F}\cap (L\smallsetminus F)$. Then $U\subseteq \overline{A}$. But $A\in \NdS(L)$ and an open sublocale of $L\smallsetminus F$ is open in $L$, so we have that $U=\mathsf{O}$. Thus $A\smallsetminus F\in\NdS(L\smallsetminus F)$. We have found a sublocale $F\smallsetminus\overline{\Int_{A}(F)}$ of $F$ and $A\smallsetminus F\in\NdS(L\smallsetminus F)$ such that $\left(F\smallsetminus\overline{\inte_{A}(F)}\right)\cap\Int_{F}\left(F\cap \overline{A\smallsetminus F}\right)\neq \mathsf{O}$, i.e, a sublocale of $F$ which is not almost $F$-inaccessible. By Theorem \ref{maximal}(3), $F$ is not homogeneous maximal nowhere dense, which is a contradiction.
	\end{proof}
	
In what follows, we characterize locales in which every non-void nowhere dense sublocale is maximal nowhere dense. Recall from \cite{D2} that the \textit{boundary} of a sublocale $S$ of a locale $L$ is given by $\bd(S)=\overline{S}\smallsetminus \inte(S)$.

\begin{theorem}\label{thmmnd}
	Let $L$ be a locale. The following statements are equivalent.
	\begin{enumerate}
		\item Every non-void nowhere dense sublocale of $L$ is maximal nowhere dense.
		\item Every non-void closed nowhere dense sublocale is maximal nowhere dense.
		\item Every non-void open sublocale induced by a non-complemented element of $L$ has a maximal nowhere dense boundary. 
		\item Every non-void closed nowhere dense sublocale of $L$ is homogeneous maximal nowhere dense.
		\item Every non-void closed nowhere dense sublocale is almost inaccessible as a sublocale of $L$ with respect to itself.
		\item Every non-void closed nowhere dense sublocale is inaccessible as a sublocale of $L$ with respect to itself.
	\end{enumerate}
\end{theorem}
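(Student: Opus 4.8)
The plan is to prove the web of implications $(1)\Rightarrow(2)\Rightarrow(1)$, $(2)\Leftrightarrow(3)$, $(2)\Leftrightarrow(4)$, $(4)\Rightarrow(5)\Rightarrow(2)$ and $(5)\Rightarrow(6)\Rightarrow(5)$, which together establish the equivalence of all six statements. The organizing principle is that, by Proposition \ref{mndcmnd} together with the fact (recalled from \cite{N}) that a sublocale is nowhere dense exactly when its closure is, passing from an arbitrary non-void nowhere dense sublocale $N$ to the non-void closed nowhere dense sublocale $\overline{N}$ loses nothing; one then feeds such closed sublocales into Theorem \ref{maximal}, whose parts (1) and (3) already tie together homogeneous maximal nowhere density, almost inaccessibility with respect to oneself, and maximal nowhere density.

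The implications $(1)\Rightarrow(2)$ (trivial) and $(2)\Rightarrow(1)$ (for non-void nowhere dense $N$, the sublocale $\overline{N}$ is non-void closed nowhere dense, hence maximal nowhere dense by (2), hence $N$ is maximal nowhere dense by Proposition \ref{mndcmnd}) are immediate. For $(2)\Leftrightarrow(3)$ I would record the boundary computation: since $\overline{\mathfrak{o}(a)}=\mathfrak{c}(a^{*})$ and $\mathfrak{o}(a)$ is complemented with complement $\mathfrak{c}(a)$,
\[\bd\big(\mathfrak{o}(a)\big)=\overline{\mathfrak{o}(a)}\smallsetminus\mathfrak{o}(a)=\mathfrak{c}(a^{*})\cap\mathfrak{c}(a)=\mathfrak{c}(a\vee a^{*}).\]
Since $(a\vee a^{*})^{*}=a^{*}\wedge a^{**}=0$ this boundary is always nowhere dense, and it is non-void precisely when $a$ is non-complemented; conversely a non-void closed nowhere dense $\mathfrak{c}(x)$ has $x$ dense, so $x^{*}=0$ and $\mathfrak{c}(x)=\mathfrak{c}(x\vee x^{*})=\bd(\mathfrak{o}(x))$ with $x$ non-complemented. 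Thus the two families of sublocales coincide and $(2)\Leftrightarrow(3)$ follows. For $(2)\Rightarrow(4)$: an $F$-regular-closed sublocale of a closed nowhere dense $F$ is closed in $F$, hence closed in $L$, and is contained in $F$, hence nowhere dense in $L$, so each non-void such sublocale is maximal nowhere dense by (2); that is, $F$ is homogeneous maximal nowhere dense. The reverse $(4)\Rightarrow(2)$ is Proposition \ref{smndandmnd}, $(4)\Rightarrow(5)$ is the case $T=F$ of Theorem \ref{maximal}(3), $(5)\Rightarrow(2)$ is Theorem \ref{maximal}(1), and $(6)\Rightarrow(5)$ is Proposition \ref{almostinaccesprop}(1).

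The implication with genuine content is $(5)\Rightarrow(6)$. Assuming (5), statement (2) holds by Theorem \ref{maximal}(1). Fix a non-void closed nowhere dense $F$ and an $(L\smallsetminus F)$-nowhere dense sublocale $N$, and set $G=F\cap\overline{N}$. As $F$ and $\overline{N}$ are closed, $G$ is closed in $L$; as $G\subseteq F$ with $F$ nowhere dense, $G$ is nowhere dense in $L$. On the other hand $G=\overline{N}\cap F$ is closed in $F$, and $F\in\mathcal{S}_{\Ainac}(F)$ gives, via Lemma \ref{complalmost} (applicable since $F$ is complemented), $\Int_{F}\big(F\cap\overline{N}\big)=\mathsf{O}$, so $G$ is nowhere dense in $F$. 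If $G\neq\mathsf{O}$ then $G$ is a non-void closed nowhere dense sublocale of $L$ --- hence maximal nowhere dense by (2) --- which is nowhere dense in the nowhere dense sublocale $F$, contradicting Definition \ref{mnddef}. Therefore $G=\mathsf{O}$; that is, $F\cap\overline{N}=\mathsf{O}$ for every $(L\smallsetminus F)$-nowhere dense $N$, which is exactly the assertion that $F$ is inaccessible as a sublocale of $L$ with respect to itself, so (6) holds.

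I expect the chief difficulty to be organizational rather than technical: the $(5)\Rightarrow(6)$ argument exploits the maximal nowhere density of the auxiliary sublocale $G$, so one must be careful to have $(5)\Rightarrow(2)$ available (it is, through Theorem \ref{maximal}(1)) before running it. Everything else --- checking the identity $\bd(\mathfrak{o}(a))=\mathfrak{c}(a\vee a^{*})$ and rewriting ``$F$ is almost inaccessible with respect to itself'' as the interior equation via Lemma \ref{complalmost} --- is routine sublocale calculus.
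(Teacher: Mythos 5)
Your proof is correct, and in two places it genuinely diverges from the paper's. First, the paper proves the single cycle $(2)\Rightarrow(3)\Rightarrow(4)\Rightarrow(5)\Rightarrow(6)\Rightarrow(2)$, and its step $(3)\Rightarrow(4)$ requires identifying each non-void $\mathfrak{c}(x)$-regular-closed sublocale as $\mathfrak{c}(y\rightarrow x)=\bd(\mathfrak{o}(y\rightarrow x))$ with $y\rightarrow x$ non-complemented; you instead observe that the family of boundaries in (3) is literally the same family as the non-void closed nowhere dense sublocales in (2) (via $\bd(\mathfrak{o}(a))=\mathfrak{c}(a\vee a^{*})$ with $(a\vee a^{*})^{*}=0$), which makes $(2)\Leftrightarrow(3)$ essentially a tautology, and you get $(2)\Rightarrow(4)$ for free because regular-closed sublocales of a closed nowhere dense sublocale are themselves non-void closed nowhere dense in $L$. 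Second, for $(5)\Rightarrow(6)$ the paper applies hypothesis (5) to the auxiliary sublocale $\overline{N}\cap\mathfrak{c}(x)$, verifies that $N$ is nowhere dense in $L\smallsetminus(\overline{N}\cap\mathfrak{c}(x))$, and derives $\overline{N}\cap\mathfrak{c}(x)\subseteq\overline{(\overline{N}\cap\mathfrak{c}(x))\cap(L\smallsetminus\overline{N})}=\mathsf{O}$ directly; you apply (5) only to $F$ itself, use Lemma \ref{complalmost} to see that $G=F\cap\overline{N}$ is nowhere dense in $F$, and then invoke the already-established statement (2) to conclude that a non-void $G$ would be maximal nowhere dense yet nowhere dense inside the nowhere dense $F$ --- a contradiction. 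Your route buys a shorter, more conceptual argument (no verification that $N$ is nowhere dense in the complement of the auxiliary sublocale), at the cost of needing $(5)\Rightarrow(2)$ in hand first, which you correctly flag and which Theorem \ref{maximal}(1) supplies; the paper's route keeps $(5)\Rightarrow(6)$ self-contained within the cycle. Both are valid.
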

\begin{proof}
	$(1)\Longleftrightarrow(2)$: Follows from Proposition \ref{mndcmnd}.
	
	$(2)\Longrightarrow(3)$: Let $\mathfrak{o}(x)\in\mathcal{S}(L)$ be non-void with $x$ non-complemented. Then $x\vee x^{*}\neq 1$ making $\bd(\mathfrak{o}(x))=\mathfrak{c}(x\vee x^{*})$ a non-void closed nowhere dense sublocale. It follows from (2) that $\bd(\mathfrak{o}(x))$ is maximal nowhere dense.
	
	$(3)\Longrightarrow(4)$: Let $\mathfrak{c}(x)$ be a non-void nowhere dense sublocale of $L$ and choose $y\in L$ such that $\overline{\mathfrak{o}(y)\cap\mathfrak{c}(x)}\cap\mathfrak{c}(x)\neq\mathsf{O}$. Then $$\overline{\mathfrak{o}(y)\cap\mathfrak{c}(x)}\cap\mathfrak{c}(x)=\overline{\mathfrak{o}(y)\cap\mathfrak{c}(x)}=\mathfrak{c}(y\rightarrow x)\neq\mathsf{O}$$ implying that $y\rightarrow x\neq 1$. But $\mathfrak{c}(y\rightarrow x)\subseteq\mathfrak{c}(x)\in\NdS(L)$, so $\mathfrak{c}(y\rightarrow x)\in\NdS(L)$ making $\mathfrak{o}(y\rightarrow x)$ non-void, open and $$(y\rightarrow x)\vee(y\rightarrow x)^{*}=(y\rightarrow x)\vee 0=y\rightarrow x\neq 1.$$ It follows from (3) that $$\bd(\mathfrak{o}(y\rightarrow x))=\mathfrak{c}((y\rightarrow x)\vee(y\rightarrow x)^{*})=\mathfrak{c}(y\rightarrow x)=\overline{\mathfrak{o}(y)\cap\mathfrak{c}(x)}$$ is maximal nowhere dense. Thus $\mathfrak{c}(x)$ is homogeneous maximal nowhere dense.

	$(4)\Longrightarrow(5)$: Follows from Theorem \ref{maximal}(3).
	
	$(5)\Longrightarrow(6)$: Let $\mathfrak{c}(x)$ be a non-void nowhere dense sublocale and assume that $\overline{N}\cap\mathfrak{c}(x)\neq\mathsf{O}$ for some $N\in\NdS((L\smallsetminus \mathfrak{c}(x))=\mathfrak{o}(x))$. Since $\mathfrak{o}(x)$ is dense in $L$, $N$ is nowhere dense in $L$ so that the non-void sublocale $\overline{N}\cap\mathfrak{c}(x)$ is closed nowhere dense in $L$. It follows from (5) that $\overline{N}\cap\mathfrak{c}(x)$ is almost $\left(\overline{N}\cap\mathfrak{c}(x)\right)$-inaccessible. Observe that $N\in \NdS(L\smallsetminus(\overline{N}\cap\mathfrak{c}(x)))$. To see this, let $a\in L$ be such that $\mathfrak{o}(a)\cap \left(L\smallsetminus(\overline{N}\cap\mathfrak{c}(x))\right)\subseteq\overline{N}\cap \left(L\smallsetminus(\overline{N}\cap\mathfrak{c}(x))\right)$. Then $\mathfrak{o}(a)\cap \left(L\smallsetminus(\overline{N}\cap\mathfrak{c}(x))\right)\subseteq\overline{N}$. Because $N\in\NdS(L)$ and $\mathfrak{o}(a)\cap \left(L\smallsetminus(\overline{N}\cap\mathfrak{c}(x))\right)$ is open, $\mathfrak{o}(a)\cap \left(L\smallsetminus(\overline{N}\cap\mathfrak{c}(x))\right)=\mathsf{O}$. Therefore $\mathfrak{o}(a)\subseteq \overline{N}\cap\mathfrak{c}(x)$ making $\mathfrak{o}(a)=\mathsf{O}$ since $\overline{N}\cap\mathfrak{c}(x)=\overline{\overline{N}\cap\mathfrak{c}(x)}$ is nowhere dense in $L$. Thus $N\in \NdS(L\smallsetminus(\overline{N}\cap\mathfrak{c}(x)))$. 
	
	Therefore $\overline{N}\cap \mathfrak{c}(x)\subseteq\overline{\left(\overline{N}\cap\mathfrak{c}(x)\right)\cap \left(L\smallsetminus\overline{ N}\right)}=\overline{\mathsf{O}}=\mathsf{O}$ which is not possible. Thus $\mathfrak{c}(x)\cap \overline{N}=\mathsf{O}$ making $\mathfrak{c}(x)\in\mathcal{S}_{\Inac}(\mathfrak{c}(x))$.
	
	$(6)\Longrightarrow(2)$: Let $F$ be a non-void closed nowhere dense sublocale of $L$. It follows from $(6)$ that $F\in\mathcal{S}_{\Inac}(F)$. Because $\mathcal{S}_{\Inac}(F)\subseteq\mathcal{S}_{\Ainac}(F)$ by Proposition \ref{almostinaccesprop}(1), it follows from Theorem \ref{maximal}(1) that $F$ is maximal nowhere dense. 
\end{proof}
	
	In the following example we show that there is a locale having the properties described in Theorem \ref{thmmnd}.
	
	\begin{example}
		Consider the three-element chain $\three=\{1,0,a\}$. Clearly, $\three$ is non-Boolean and the only non-void closed nowhere dense sublocale of $\three$ is $\mathfrak{c}(a)$ which is maximal nowhere dense because it is not nowhere dense as a sublocale of itself. 
	\end{example}

	\section{Maximal nowhere density and Remoteness}\label{remotenessandmnd}
	In this section, we consider a connection between remoteness and maximal nowhere density. Inaccessibility will be useful here.
	
	We recall the following notations from \cite{N,N1}. Let $L$ be a locale, $S$ a dense sublocale of $L$ and $T\in \mathcal{S}(L)$. Then
	\begin{enumerate}
		\item $T$ is a \textit{remote sublocale} of $L$ in case $T\cap N=\mathsf{O}$ for every nowhere dense $N\in\mathcal{S}(L)$.
		\item $\mathcal{S}_{\text{rem}}(L)$ denotes the collection of all remote sublocales of $L$.
		\item $T$ is \textit{remote from} $S$, denoted $T\in \mathcal{S}_{\text{rem}}(L\ltimes S)$, if $T\cap\overline{N}=\mathsf{O}$ for each $S$-nowhere dense $N$.
		\item $T$ is \textit{$^{*}\!$remote from} $S$, denoted $T\in ^{*}\!\mathcal{S}_{\text{rem}}(L\ltimes S)$, provided that $T\in \mathcal{S}_{\text{rem}}(L\ltimes S)$ and $T\subseteq L\smallsetminus S$. 
	\end{enumerate}
\begin{obs}
	A simple case where inaccessibility differs from remoteness is that of Proposition \ref{almostinaccesprop}(4). Recall from \cite{N} that a necessary and sufficient condition for a locale $L$ to be remote as a sublocale of itself is that it must be Boolean. Yet, by Proposition \ref{almostinaccesprop}(4), every locale $L$ (not necessarily Boolean) is $L$-inaccessible as a sublocale of itself.
\end{obs}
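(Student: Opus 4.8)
The plan is to establish the two assertions that make up the observation separately, and then note that they conflict for any non-Boolean locale. For the first assertion, I would unwind item~(1) of the list recalled above in the case $T=L$: $L$ is a remote sublocale of itself precisely when $L\cap N=\mathsf{O}$ for every nowhere dense $N\in\mathcal{S}(L)$, and since $L\cap N=N$ this is the same as saying that $\mathsf{O}$ is the only nowhere dense sublocale of $L$. Because a sublocale is nowhere dense exactly when it misses the smallest dense sublocale $\mathfrak{B}L$, this last condition amounts to $\mathfrak{B}L=L$, that is, to $L$ being Boolean. Rather than reprove this, I would simply cite the biconditional of \cite{N} quoted in the statement.

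For the second assertion, I would recall the convention fixed in the Remark following Proposition~\ref{almostinaccesprop}: writing $L\in\mathcal{S}_{\Inac}(L)$ means that $L\cap\overline{N}=\mathsf{O}$ for every $(L\smallsetminus L)$-nowhere dense sublocale $N$. Since $L\smallsetminus L=\mathsf{O}$ and the only sublocale of $\mathsf{O}$ is $\mathsf{O}$ itself, the sole instance to verify is $L\cap\overline{\mathsf{O}}=L\cap\mathsf{O}=\mathsf{O}$, which holds; hence $L\in\mathcal{S}_{\Inac}(L)$ for every locale $L$. This is exactly Proposition~\ref{almostinaccesprop}(4), which I would cite directly.

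Putting the two together, I would take any non-Boolean locale $L$ — the three-element chain $\three$ will do — and observe that the first step shows $L$ is \emph{not} a remote sublocale of itself, while the second shows $L\in\mathcal{S}_{\Inac}(L)$. Thus $L$-inaccessibility of $L$ with respect to itself holds unconditionally, whereas remoteness of $L$ from itself is equivalent to Booleanness, so the two notions genuinely differ. I do not anticipate any real obstacle here; the only point requiring care is to interpret ``inaccessible as a sublocale of itself'' via the Remark's convention (the condition $S\cap\overline{N}=\mathsf{O}$ for every $(L\smallsetminus S)$-nowhere dense $N$) rather than through the trivial reading of $S$ as a locale in its own right.
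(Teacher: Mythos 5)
Your proposal is correct and follows essentially the same route as the paper, which simply juxtaposes the cited characterization of self-remoteness from \cite{N} with Proposition \ref{almostinaccesprop}(4); your direct verification that the only $(L\smallsetminus L)$-nowhere dense sublocale is $\mathsf{O}$ matches the paper's own argument for that proposition (there phrased via $L$ being open in itself and $\overline{N}\subseteq L\smallsetminus L=\mathsf{O}$). The care you take to invoke the convention of the Remark following Proposition \ref{almostinaccesprop} is exactly the right reading.
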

Observe that for each dense and complemented $S\in \mathcal{S}(L)$, $T\in \mathcal{S}(L\smallsetminus S)$ is $(L\smallsetminus S)$-inaccessible if and only if $T\cap \overline{N}=\mathsf{O}$ for all $(L\smallsetminus (L\smallsetminus S))$-nowhere dense sublocale $N$, i.e., for all $S$-nowhere dense sublocale $N$. This shows that sublocales of $L\smallsetminus S$ which are $ ^{*} $remote from a dense and complemented sublocale $S$ of $L$ are precisely the $(L\smallsetminus S)$-inaccessible sublocales. We formalise this in the following proposition.
\begin{proposition}\label{inaccessibilityand remoteness}
	A sublocale $T\in \mathcal{S}(L\smallsetminus S)$ where $S$ is dense and complemented in a locale $L$, is $(L\smallsetminus S)$-inaccessible iff it is $^{*}\!$remote from $S$.
\end{proposition}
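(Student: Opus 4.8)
The plan is to unwind both notions directly from their definitions and observe that they are asserting the same condition, modulo the density and complementedness hypotheses on $S$. Recall that $T$ is $^{*}$remote from $S$ means, by item (4) of the list recalled at the start of this section, that $T \subseteq L \smallsetminus S$ and $T \cap \overline{N} = \mathsf{O}$ for every $S$-nowhere dense sublocale $N$. On the other side, $T \in \mathcal{S}(L \smallsetminus S)$ is $(L\smallsetminus S)$-inaccessible means that $T \cap \overline{N} = \mathsf{O}$ for every $\bigl(L \smallsetminus (L \smallsetminus S)\bigr)$-nowhere dense sublocale $N$.

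First I would observe that since $S$ is complemented, the double supplement collapses: $L \smallsetminus (L \smallsetminus S) = S$. This is the key identity, and it is immediate from the fact that complemented sublocales are their own double complements in $\mathcal{S}(L)$. With this, the condition ``$N$ is $\bigl(L\smallsetminus(L\smallsetminus S)\bigr)$-nowhere dense'' is literally the condition ``$N$ is $S$-nowhere dense.'' So the defining requirement of $(L\smallsetminus S)$-inaccessibility for a $T \in \mathcal{S}(L\smallsetminus S)$ becomes exactly ``$T \cap \overline{N} = \mathsf{O}$ for every $S$-nowhere dense $N$,'' which is precisely the remoteness-from-$S$ condition in item (3). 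Since we are already assuming $T \in \mathcal{S}(L\smallsetminus S)$, i.e.\ $T \subseteq L \smallsetminus S$, the extra clause in item (4) distinguishing $^{*}$remote from merely remote-from is automatically satisfied. Hence the two properties coincide.

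For the forward direction I would take $T$ to be $(L\smallsetminus S)$-inaccessible, note $T \subseteq L\smallsetminus S$ by hypothesis on the ambient sublocale, and for any $S$-nowhere dense $N$ rewrite $N$ as a $\bigl(L\smallsetminus(L\smallsetminus S)\bigr)$-nowhere dense sublocale via the double-supplement identity, then apply the definition of inaccessibility to conclude $T \cap \overline{N} = \mathsf{O}$; together these give $T \in {}^{*}\mathcal{S}_{\text{rem}}(L \ltimes S)$. The converse is the same chain of equivalences read in reverse: from $^{*}$remoteness we get $T \subseteq L\smallsetminus S$ (so $T \in \mathcal{S}(L\smallsetminus S)$ is meaningful) and $T \cap \overline{N} = \mathsf{O}$ for all $S$-nowhere dense $N$, which by the identity $S = L\smallsetminus(L\smallsetminus S)$ is exactly the inaccessibility condition.

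Honestly there is no real obstacle here; the only thing to be careful about is that the density hypothesis on $S$ is what makes $L \smallsetminus S$ a sensible ``dense complement'' setting and guarantees the statement is not vacuous, while complementedness is what licenses the double-supplement cancellation. Indeed the observation immediately preceding this proposition already spells out the computation; the proof is just the remark that that computation, combined with $T \subseteq L\smallsetminus S$, is an ``if and only if.'' I would therefore write the proof in two or three lines, citing the definitions in items (3) and (4) of the section's notation list and the fact that complemented sublocales satisfy $L\smallsetminus(L\smallsetminus S) = S$.
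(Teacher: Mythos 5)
Your proposal is correct and is essentially identical to the paper's own argument: the paper proves this proposition precisely by the observation immediately preceding it, namely that complementedness gives $L\smallsetminus(L\smallsetminus S)=S$, so the defining condition of $(L\smallsetminus S)$-inaccessibility for $T\in\mathcal{S}(L\smallsetminus S)$ is literally the $^{*}$remoteness-from-$S$ condition. Nothing further is needed.
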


In the following result, we codify the variants of remoteness and inaccessibility for dense and complemented sublocales.

\begin{proposition}\label{inaccrem}
	Let $L$ be a locale, $S$ a dense and complemented sublocale of $L$ and $T\in\mathcal{S}(L\smallsetminus S)$. Consider the following statements:
	\begin{enumerate}
		\item $T\in\mathcal{S}_{\text{rem}}(L)$.
		\item $T\in\mathcal{S}_{\text{rem}}(L\ltimes S)$.
		\item $T\in{^{*}\!\mathcal{S}_{\text{rem}}(L\ltimes S)}$.
		\item $T\in\mathcal{S}_{\Inac}(L\smallsetminus S)$.
		\item $T\in\mathcal{S}_{\Ainac}(L\smallsetminus S)$.
	\end{enumerate} Then $(1)\Longrightarrow(2)\Longleftrightarrow(3)\Longleftrightarrow(4)\Longrightarrow(5)$.
\end{proposition}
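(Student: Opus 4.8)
The plan is to check each link of the chain $(1)\Longrightarrow(2)\Longleftrightarrow(3)\Longleftrightarrow(4)\Longrightarrow(5)$ in turn, leaning on results already established rather than on fresh computation.

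First I would prove $(1)\Longrightarrow(2)$. The only non-formal ingredient is the observation that, since $S$ is dense, $\mathfrak{B}S=\mathfrak{B}L$, so every $S$-nowhere dense sublocale $N$ misses $\mathfrak{B}L$ and is hence nowhere dense in $L$; consequently $\overline{N}$ is nowhere dense in $L$ as well, because a sublocale is nowhere dense iff its closure is. A remote sublocale $T\in\mathcal{S}_{\text{rem}}(L)$ misses every nowhere dense sublocale of $L$, so in particular $T\cap\overline{N}=\mathsf{O}$, which gives $T\in\mathcal{S}_{\text{rem}}(L\ltimes S)$.

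Next, $(2)\Longleftrightarrow(3)$ is immediate, since $(3)$ adds to $(2)$ only the requirement $T\subseteq L\smallsetminus S$, and this is built into the standing hypothesis $T\in\mathcal{S}(L\smallsetminus S)$. The equivalence $(3)\Longleftrightarrow(4)$ is nothing but Proposition \ref{inaccessibilityand remoteness}, invoked with the dense complemented sublocale $S$; here one uses that $L\smallsetminus(L\smallsetminus S)=S$, so that the $(L\smallsetminus S)$-inaccessibility of $T$ is exactly the condition $T\cap\overline{N}=\mathsf{O}$ ranging over all $S$-nowhere dense $N$. Finally, $(4)\Longrightarrow(5)$ is Proposition \ref{almostinaccesprop}(1) applied with $L\smallsetminus S$ in the role of $S$: every $(L\smallsetminus S)$-inaccessible sublocale is almost $(L\smallsetminus S)$-inaccessible.

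I do not foresee a genuine obstacle, as every step is either formal or a direct appeal to a cited result. The one place that calls for a little care is $(1)\Longrightarrow(2)$, where one must first confirm that $S$-nowhere density descends to nowhere density in $L$; but this is precisely the content of the density facts recalled in the Preliminaries.
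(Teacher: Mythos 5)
Your proof is correct and follows essentially the same decomposition as the paper's: the links $(2)\Leftrightarrow(3)$, $(3)\Leftrightarrow(4)$ and $(4)\Rightarrow(5)$ are handled by the same observations and the same cited propositions. The only difference is that for $(1)\Rightarrow(2)$ the paper simply cites an external result from \cite{N}, whereas you unfold the underlying argument (density of $S$ gives $\mathfrak{B}S=\mathfrak{B}L$, so $S$-nowhere dense sublocales and their closures are nowhere dense in $L$), which is a valid and self-contained justification of the same step.
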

\begin{proof}
	$(1)\Longrightarrow(2)$: Follows from \cite[Proposition 3.5.(4).]{N}
	
	$(2)\Longleftrightarrow(3)$: This is a combination of \cite[Proposition 3.2.]{N} and the fact that $T\subseteq L\smallsetminus S$.
	
	$(3)\Longleftrightarrow(4)$: Follows from Proposition \ref{inaccessibilityand remoteness}.

	$(4)\Longrightarrow(5)$: Follows from Proposition \ref{almostinaccesprop}(1).
\end{proof}
\begin{example}
		\cite{V1} In \textbf{Top}, we have that a point $p\in \beta X\smallsetminus X$ is remote if and only if $p$ is $(\beta X\smallsetminus X)$-inaccessible.
\end{example}

We also include the following result which shows inaccessibility of the least dense sublocale.
\begin{proposition}\label{almostinaccesprop1}
	Let $L$ be a locale and $S\in \mathcal{S}(L)$.  If $S$ is open, then $S\cap \Rs(L\ltimes S)$ is $S$-inaccessible, where $\Rs(L\ltimes S)=\bigvee\{A\in\mathcal{S}(L):A\text{ is remote from } S\}$. Moreover, if $S$ is dense and open, then $\mathfrak{B}L$ is $S$-inaccessible.
\end{proposition}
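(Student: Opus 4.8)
The plan is to prove the two assertions separately, in both cases reducing the claim to the inaccessibility characterization of Proposition \ref{inacccharact}, namely that $T\in\mathcal{S}(S)$ is $S$-inaccessible precisely when $T\cap\mathfrak{c}(a)=\mathsf{O}$ for every $(L\smallsetminus S)$-dense $a$. So first I would fix $S=\mathfrak{o}(u)$ open, write $L\smallsetminus S=\mathfrak{c}(u)$, and recall that, since $S$ is complemented, $\overline{N}\subseteq\mathfrak{c}(u)$ for every $S$-nowhere dense $N$, hence $S\cap\overline{N}=\mathsf{O}$ by complementation. Thus $S$ itself is $S$-inaccessible (this is Proposition \ref{almostinaccesprop}(3)). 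The point of the first claim is to upgrade ``$S$ is $S$-inaccessible'' to ``$S\cap\Rs(L\ltimes S)$ is $S$-inaccessible''; but since $S\cap\Rs(L\ltimes S)\subseteq S$ and $S\cap\Rs(L\ltimes S)\in\mathcal{S}(S)$, this is immediate from Proposition \ref{almostinaccesprop}(2), which says $S$-inaccessibility is inherited by sublocales. So the first half should require essentially no work beyond citing \ref{almostinaccesprop}(3) and \ref{almostinaccesprop}(2); I would spell out why $S\cap\Rs(L\ltimes S)$ is a sublocale of $S$ (it is the intersection of two sublocales, hence a sublocale, and it is contained in $S$).

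For the ``moreover'' part, assume $S$ is dense and open. I would show $\mathfrak{B}L\in\mathcal{S}(S)$ first: $\mathfrak{B}L$ is the smallest dense sublocale, so $\mathfrak{B}L\subseteq S$ whenever $S$ is dense; and then, using Proposition \ref{almostinaccesprop}(2) again together with the already-established fact that $S$ is $S$-inaccessible, conclude that $\mathfrak{B}L$ is $S$-inaccessible. Alternatively, and perhaps more in the spirit of the paper's abstract (``the Booleanization of a locale is inaccessible with respect to every dense and open sublocale''), I would argue directly via Proposition \ref{inacccharact}: let $a$ be $(L\smallsetminus S)$-dense, i.e.\ $\mathfrak{c}_{\mathfrak{c}(u)}(a)$ is $\mathfrak{c}(u)$-dense; one checks $a$ is then a dense element of $L$ (because $S=\mathfrak{o}(u)$ is dense, $u^{*}=0$, and an $(L\smallsetminus S)$-dense element pulls back to a dense element of $L$), and since every nowhere dense — in particular, every closed sublocale $\mathfrak{c}(a)$ with $a$ dense — misses $\mathfrak{B}L$, we get $\mathfrak{B}L\cap\mathfrak{c}(a)=\mathsf{O}$, which is exactly condition (2) of Proposition \ref{inacccharact}.

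The only step that needs genuine care is the implication ``$a$ is $(L\smallsetminus S)$-dense $\Rightarrow$ $a$ is dense in $L$,'' which is where the density of $S$ is used; I expect this to be the main (minor) obstacle. Here one computes the pseudocomplement of $a$ inside $\mathfrak{c}(u)$ as $a^{*\mathfrak{c}(u)} = a\to_{\mathfrak{c}(u)} u = a\to u$, so $(L\smallsetminus S)$-density of $a$ says $a\to u = u$, equivalently $\mathfrak{o}(a)\cap\mathfrak{c}(u)=\mathfrak{c}(u)$, i.e.\ $\mathfrak{c}(u)\subseteq\mathfrak{o}(a)$. Combined with $\mathfrak{o}(u)\subseteq\mathfrak{o}(a)$? — no: rather, one uses that $\mathfrak{o}(a)$ contains both $\mathfrak{c}(u)$ and, since $S=\mathfrak{o}(u)$ is dense so $\overline{\mathfrak{o}(u)}=L$, hence $\mathfrak{o}(u)\vee\mathfrak{c}(u)=L$ gives $\mathfrak{o}(a)=L$, forcing $a=1$ — that is too strong, so instead I will argue at the level of elements: $a\to u=u$ together with $u^{*}=0$ yields $a^{*}=a\to 0 = a\to(u^{*}) \le (a\to u)^{*}\vee\cdots$; the cleanest route is simply $a^{*}\wedge a = 0$ and $a^{*}\wedge u \le a^{*}\wedge(a\to u)\cdot$ — I would in the end just verify $a^{*}=0$ by noting $a^{*} = a^{*}\wedge 1 = a^{*}\wedge(u\vee u^{*})^{?}$ fails since $u$ need not be complemented, and instead invoke that $\mathfrak{B}L=\mathfrak{B}S$ for dense $S$ (stated in the Preliminaries) so that ``$\mathfrak{c}(a)$ nowhere dense in $L\smallsetminus S$'' already means $\mathfrak{c}(a)\cap\mathfrak{B}L=\mathsf{O}$, and the desired conclusion $\mathfrak{B}L\cap\mathfrak{c}(a)=\mathsf{O}$ is literally this. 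That observation — that $(L\smallsetminus S)$-nowhere density, for $S$ dense, is detected by the \emph{same} Booleanization $\mathfrak{B}L$ — is what makes the result nearly automatic, and I would foreground it.
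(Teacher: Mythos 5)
Your main line of argument is correct and, for the first assertion, is exactly the paper's proof: $S$ open gives $S\in\mathcal{S}_{\Inac}(S)$ by Proposition \ref{almostinaccesprop}(3), and heredity under passing to sublocales (Proposition \ref{almostinaccesprop}(2)) handles $S\cap\Rs(L\ltimes S)\subseteq S$. For the ``moreover'' part you differ slightly from the paper: the paper invokes the identity $\mathfrak{B}L=S\cap\Rs(L\ltimes S)$ for dense $S$ (Observation 3.1 of \cite{N}) and so reduces literally to the first assertion, whereas you use only that $\mathfrak{B}L$ is the smallest dense sublocale, hence $\mathfrak{B}L\subseteq S$, and apply heredity again. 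Your route is equally valid and a little more self-contained, since it avoids the external citation.

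One warning about the ``alternative'' direct argument you sketch: its closing claim is wrong. You assert that, because $\mathfrak{B}T=\mathfrak{B}L$ for dense $T$, ``$\mathfrak{c}(a)$ nowhere dense in $L\smallsetminus S$'' already means $\mathfrak{c}(a)\cap\mathfrak{B}L=\mathsf{O}$. But the ambient sublocale there is $L\smallsetminus S$, which for $S$ dense is \emph{not} dense (it is nowhere dense), so $(L\smallsetminus S)$-nowhere density is detected by $\mathfrak{B}(L\smallsetminus S)$, not by $\mathfrak{B}L$; the fact you quote applies to $S$, not to its supplement. The direct route does go through, but via the element computation you abandoned: with $S=\mathfrak{o}(u)$ dense ($u^{*}=0$) and $a\geq u$ an $(L\smallsetminus S)$-dense element, one has $a^{*}=a\rightarrow 0\leq a\rightarrow u=u$ and $a^{*}\wedge u\leq a^{*}\wedge a=0$, whence $a^{*}\leq u^{*}=0$; so $a$ is dense in $L$, $\mathfrak{c}(a)$ is nowhere dense in $L$, and $\mathfrak{B}L\cap\mathfrak{c}(a)=\mathsf{O}$ as required by Proposition \ref{inacccharact}. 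Since your primary argument does not rely on this tangent, the proposal as a whole stands.
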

\begin{proof}
	The first part follows from Proposition \ref{almostinaccesprop}(3) and (2). 

Since, according to Observation \cite[Observation 3.1.]{N}, $\mathfrak{B}L=T\cap\Rs(L\ltimes T)$ for any dense $T\in\mathcal{S}(L)$, we have that $\mathfrak{B}L=S\cap\Rs(L\ltimes S)$ is $S$-inaccessible for dense and open $S$. 
\end{proof}
	Using Proposition \ref{inaccrem} and the fact if $S\in\mathcal{S}(L)$ is open and dense, then $S^{\#}=L\smallsetminus S$ is nowhere dense, we get the following result about remoteness and maximal nowhere density.

\begin{proposition}\label{remoteandmaximal}
	Let $S\neq L$ be an open dense sublocale of $L$.
	\begin{enumerate}
		\item If $S^{\#}\in{^{*}\!\mathcal{S}_{\text{rem}}(L\ltimes S )}$, then $S^{\#}$ is maximal nowhere dense in $L$.
		\item If $S^{\#}$ is homogeneous maximal nowhere dense, then every $S^{\#}$-remote sublocale is $^{*}$remote from $S$.
	\end{enumerate}
\end{proposition}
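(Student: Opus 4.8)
In both parts we use that, since $S$ is open, dense and $S\neq L$, the supplement $S^{\#}=L\smallsetminus S$ is a non-void closed nowhere dense sublocale of $L$, that $S$ is complemented, and that $L\smallsetminus S^{\#}=S$.

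For (1) the plan is to run a short chain through results already in place. As a sublocale of $L\smallsetminus S$ the sublocale $S^{\#}$ is the whole of $L\smallsetminus S$, so the equivalence $(3)\Longleftrightarrow(4)$ of Proposition \ref{inaccrem} converts the hypothesis $S^{\#}\in{}^{*}\!\mathcal{S}_{\text{rem}}(L\ltimes S)$ into $S^{\#}\in\mathcal{S}_{\Inac}(L\smallsetminus S)=\mathcal{S}_{\Inac}(S^{\#})$. By Proposition \ref{almostinaccesprop}(1) this gives $S^{\#}\in\mathcal{S}_{\Ainac}(S^{\#})$, and since $S^{\#}$ is a non-void closed nowhere dense sublocale, Theorem \ref{maximal}(1) concludes that $S^{\#}$ is maximal nowhere dense in $L$.

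For (2), let $T$ be an $S^{\#}$-remote sublocale, so $T\in\mathcal{S}(S^{\#})$ --- in particular $T\subseteq L\smallsetminus S$ already --- and $T$ misses every $S^{\#}$-nowhere dense sublocale of $S^{\#}$. It remains only to show $T\cap\overline{N}=\mathsf{O}$ for every $S$-nowhere dense $N$. The key move is to apply Theorem \ref{maximal}(3) to $S^{\#}$ regarded as a sublocale of itself: since $S^{\#}$ is homogeneous maximal nowhere dense, every sublocale of $S^{\#}$ --- in particular $S^{\#}$ --- is almost $S^{\#}$-inaccessible, and as $S^{\#}$ is complemented, Proposition \ref{complementedremote}(2) (equivalently Observation \ref{obsinacc}) then yields $\Int_{S^{\#}}\bigl(S^{\#}\cap\overline{N}\bigr)=\mathsf{O}$ for every $(L\smallsetminus S^{\#})$-nowhere dense $N$, that is, for every $S$-nowhere dense $N$. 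Now $\overline{N}\cap S^{\#}$ is closed in $S^{\#}$ (it is $S^{\#}$ intersected with the closed sublocale $\overline{N}$) and has void $S^{\#}$-interior, hence is nowhere dense in $S^{\#}$ by \cite[Proposition 4.1.]{BP1}. Remoteness of $T$ inside $S^{\#}$ forces $T\cap\bigl(\overline{N}\cap S^{\#}\bigr)=\mathsf{O}$, and $T\subseteq S^{\#}$ upgrades this to $T\cap\overline{N}=\mathsf{O}$. Thus $T\in\mathcal{S}_{\text{rem}}(L\ltimes S)$ and $T\subseteq L\smallsetminus S$, i.e. $T\in{}^{*}\!\mathcal{S}_{\text{rem}}(L\ltimes S)$.

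The one step that is not routine bookkeeping is turning the homogeneity of $S^{\#}$ into the assertion that $\overline{N}\cap S^{\#}$ is \emph{genuinely} nowhere dense in $S^{\#}$, so that the hypothesis ``$T$ is $S^{\#}$-remote'' --- which a priori only restricts $T$ against nowhere dense sublocales \emph{of} $S^{\#}$ --- can be brought to bear on $\overline{N}$ itself. After that everything follows from the definitions and the identity $L\smallsetminus S^{\#}=S$, so I expect no further obstacle beyond keeping the ambient locales $L$ and $S^{\#}$ carefully distinguished.
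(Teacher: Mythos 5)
Your proposal is correct and follows essentially the same route as the paper: part (1) goes through Proposition \ref{inaccrem} to get $S^{\#}\in\mathcal{S}_{\Ainac}(S^{\#})$ and then invokes Theorem \ref{maximal}(1), and part (2) uses Theorem \ref{maximal}(3) to conclude that $S^{\#}\cap\overline{N}$ is $S^{\#}$-nowhere dense (being $S^{\#}$-closed with void $S^{\#}$-interior) so that $S^{\#}$-remoteness of $T$ forces $T\cap\overline{N}=\mathsf{O}$. The only cosmetic difference is that you route the bookkeeping explicitly through Proposition \ref{almostinaccesprop}(1) and Proposition \ref{complementedremote}(2), steps the paper leaves implicit.
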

\begin{proof}
	(1) If $S^{\#}\in{^{*}\!\mathcal{S}_{\text{rem}}(L\ltimes S )}$, then, by Proposition \ref{inaccrem}, $S^{\#}\in\mathcal{S}_{\Ainac}(S^{\#})$. It follows from Theorem \ref{maximal}(1) that $S^{\#}$ is maximal nowhere dense in $L$.

	(2) Let $A\in\mathcal{S}_{\text{rem}}(S^{\#})$ and choose an $S$-nowhere dense $N$. Since, by Theorem \ref{maximal}(3), sublocales of homogeneous maximal nowhere dense sublocales are almost inaccessible as sublocales of $L$ with respect to themselves, $S^{\#}$ is almost $S^{\#}$-inaccessible, i.e., $\inte_{S^{\#}}(S^{\#}\cap\overline{N})=\mathsf{O}$. Since $S^{\#}\cap\overline{N}=\overline{S^{\#}\cap\overline{N}}^{S^{\#}}$, we get that $S^{\#}\cap\overline{N}$ is $S^{\#}$-nowhere dense. Because $A\in\mathcal{S}_{\text{rem}}(S^{\#})$, $$\mathsf{O}=A\cap S^{\#}\cap\overline{N}=A\cap\overline{N}.$$ Thus $A\in\mathcal{S}_{\text{rem}}(L\ltimes S)$ making $A\in {^{*}\!\mathcal{S}_{\text{rem}}(L\ltimes S)}$ since $A\subseteq S^{\#}$.
\end{proof}
	\section{Preservation of maximal nowhere density}
	
	We end this paper with a discussion of  localic maps that send maximal nowhere density back and forth. We shall also include results about inaccessibility.
	
	\begin{proposition}\label{mapmnd}
		Let $f:L\rightarrow M$ be a localic map such that both $f$ and $f^{*}$ send dense elements to dense elements. Then $f_{-1}$ preserves maximal nowhere dense sublocales.
	\end{proposition}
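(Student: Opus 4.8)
The plan is to show that if $N$ is maximal nowhere dense in $M$, then $f_{-1}[N]$ is maximal nowhere dense in $L$, by working with the characterization from Proposition~\ref{mndcmnd} in terms of the bottom element. First I would recall the behaviour of $f_{-1}$ on closed sublocales: $f_{-1}[\mathfrak{c}_M(x)] = \mathfrak{c}_L(f^*(x))$. Since Proposition~\ref{mndcmnd} lets us replace a nowhere dense sublocale by its closure, it suffices to handle the case $N = \mathfrak{c}_M(x)$ with $x$ dense in $M$; then $f_{-1}[N] = \mathfrak{c}_L(f^*(x))$, and since $f^*$ sends dense elements to dense elements, $f^*(x)$ is dense in $L$, so $f_{-1}[N]$ is nowhere dense in $L$. (Here I would invoke the Corollary after Proposition~\ref{mndcmnd}: $\mathfrak{c}(x)$ is nowhere dense iff $x$ is dense.)

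Next, using that same Corollary, I would argue by contrapositive: suppose $f_{-1}[N] = \mathfrak{c}_L(f^*(x))$ is \emph{not} maximal nowhere dense in $L$. Then there is a dense $y \in L$ with $y \leq f^*(x)$ and $f^*(x) \rightarrow y = y$. The goal is to push this witness forward through $f$ to produce a dense element $z \in M$ with $z \leq x$ and $x \rightarrow z = z$, contradicting maximality of $N = \mathfrak{c}_M(x)$ in $M$. The natural candidate is $z = f(y)$: since $f$ sends dense elements to dense elements, $z$ is dense in $M$; and since $y \leq f^*(x)$ is equivalent to $f(y) \leq x$ by adjunction (because $f^*(x) = \bigvee\{a : f(a) \le x\}$... more precisely $a \le f^*(x) \iff f(a)\le x$ follows from $f^* \dashv f$ applied the right way, i.e.\ $y \le f^*(x) \iff$ there is a relation we must check), we get $z \leq x$.

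The main obstacle will be the equation $x \rightarrow z = z$, i.e.\ transferring the ``pseudocomplement-in-$\mathfrak{c}$'' condition along $f$. The relation $f^*(x)\rightarrow y = y$ need not map cleanly to $x \rightarrow f(y) = f(y)$ because $f$ is only a right adjoint and does not in general commute with the Heyting arrow. I expect the argument to instead run through the computation $\left(\bigwedge N\right)^{*\mathfrak{c}(y)} = \bigwedge N \vee \left(\bigwedge N \rightarrow \bigwedge K\right)$ displayed before the Corollary, rephrasing everything as: $f_{-1}[N]$ fails to be m.n.d.\ iff there is a closed nowhere dense $K' \subseteq L$ with $f_{-1}[N]$ nowhere dense in $K'$, and then showing $f[K']$ (or its closure) is a closed nowhere dense sublocale of $M$ in which $N$ is nowhere dense. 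This is where the hypothesis ``$f$ sends dense elements to dense elements'' is used a second time: $\bigwedge f[K'] $ relates to $f(\bigwedge K')$, which is dense in $M$ since $\bigwedge K'$ is dense in $L$, giving a closed nowhere dense sublocale of $M$; and Lemma~\ref{mndcmndlemma} together with the image-of-closure inequality $f[\overline{A}]\subseteq\overline{f[A]}$ would let me conclude $N$ is nowhere dense in it. Assembling these pieces yields the contradiction, proving $f_{-1}[N]$ is maximal nowhere dense.
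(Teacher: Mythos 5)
Your overall architecture matches the paper's proof: reduce to closed sublocales via Proposition~\ref{mndcmnd}, observe that $f_{-1}[\overline{N}]=\mathfrak{c}_L(f^*(\bigwedge N))$ is nowhere dense because $f^*$ preserves dense elements, argue by contradiction with a closed nowhere dense $K'=\mathfrak{c}(b)$ in which $f_{-1}[\overline{N}]$ is nowhere dense, and push $K'$ forward to $\mathfrak{c}(f(b))$, which is closed nowhere dense in $M$ because $f$ sends dense elements to dense elements. You were also right to abandon the attempt to transport the element-level witness $y$ via $z=f(y)$: the adjunction $f^*\dashv f$ gives $x\leq f(y)\iff f^*(x)\leq y$, so $y\leq f^*(x)$ yields nothing clean about $f(y)$ versus $x$, and the Heyting condition does not transfer.

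However, the decisive step --- showing that $\overline{N}$ is nowhere dense in $\mathfrak{c}(f(b))$ --- is left as a hope, and the tools you name for it (Lemma~\ref{mndcmndlemma} and the inequality $f[\overline{A}]\subseteq\overline{f[A]}$) will not deliver it: both point in the wrong direction, since you know something about $f_{-1}[\overline{N}]$ inside $\mathfrak{c}(b)$ and need to deduce something about $\overline{N}$ inside $\mathfrak{c}(f(b))$. The correct argument pulls back rather than pushes forward: take any open sublocale $\mathfrak{o}(x)\cap\mathfrak{c}(f(b))\subseteq\overline{N}\cap\mathfrak{c}(f(b))$; applying $f_{-1}$ and using the unit inequality $b\leq f^*(f(b))$ gives $\mathfrak{o}(f^*(x))\cap\mathfrak{c}(b)\subseteq f_{-1}[\overline{N}]\cap\mathfrak{c}(b)$; nowhere density of $f_{-1}[\overline{N}]$ in $\mathfrak{c}(b)$ forces $\mathfrak{o}(f^*(x))\cap\mathfrak{c}(b)=\mathsf{O}$, i.e.\ $f^*(x)\leq b$; the adjunction then gives $x\leq f(b)$, so $\mathfrak{o}(x)\cap\mathfrak{c}(f(b))=\mathsf{O}$. (Equivalently, as in the paper: $\mathfrak{c}(b)\subseteq f_{-1}[\mathfrak{c}(x)]$ implies $f[\mathfrak{c}(b)]\subseteq\mathfrak{c}(x)$, hence $\mathfrak{c}(f(b))=\overline{f[\mathfrak{c}(b)]}\subseteq\mathfrak{c}(x)$.) Without this pullback-and-adjunction computation your proof does not close; with it, it coincides with the paper's.
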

	\begin{proof}
		Let $N\in\mathcal{S}(M)$ be maximal nowhere dense in $M$. Then $\overline{N}$ is maximal nowhere dense in $L$. 
		Since $f^{*}$ is weakly open, it follows from \cite[theorem 2.29]{N} that $f_{-1}$ preserves closed nowhere dense sublocales so that $f_{-1}[\overline{N}]$ (which is equal to $\mathfrak{c}(f^{*}(\bigwedge N))$) is nowhere dense in $L$. It is left to show that it is maximal nowhere dense. Suppose not, that is, there exists a nowhere dense sublocale $\mathfrak{c}(b)$ of $L$ such that $f_{-1}[\overline{N}]$ is nowhere dense in $\mathfrak{c}(b)$. Since $f$ sends dense elements to dense elements, $f(b)$ is dense in $M$ making $\mathfrak{c}(f(b))$ nowhere dense in $M$. The sublocale $\overline{N}$ is nowhere dense in the nowhere dense sublocale $\mathfrak{c}(f(b))$. Indeed, if $\mathfrak{o}(x)\cap \mathfrak{c}(f(b))\subseteq \overline{N}^{\mathfrak{c}(f(b))}=\overline{N}\cap\mathfrak{c}(f(b))$ for some $x\in M$, then $\mathfrak{o}(x)\cap \mathfrak{c}(f(b))\subseteq \overline{N}$ so that $$\mathfrak{o}(f^{*}(x))\cap \mathfrak{c}(b)\subseteq \mathfrak{o}(f^{*}(x))\cap\mathfrak{c}(f^{*}(f(b)))=f_{-1}[\mathfrak{o}(x)\cap \mathfrak{c}(f(b))]\subseteq  f_{-1}[\overline{N}].$$ Therefore $\mathfrak{o}(f^{*}(x))\cap \mathfrak{c}(b)\subseteq f_{-1}[\overline{N}]\cap \mathfrak{c}(b)$. Since $f_{-1}[\overline{N}]\in\NdS(\mathfrak{c}(b))$ and $\mathfrak{o}(f^{*}(x))\cap\mathfrak{c}(b)$ is open in $\mathfrak{c}(b)$, $\mathfrak{o}(f^{*}(x))\cap \mathfrak{c}(b)=\mathsf{O}$ which implies that $\mathfrak{c}(b)\subseteq \mathfrak{c}(f^{*}(x))=f_{-1}[\mathfrak{c}(x)]$. Therefore $f[\mathfrak{c}(b)]\subseteq f[f_{-1}[\mathfrak{c}(x)]]\subseteq\mathfrak{c}(x)$ implying that $\mathfrak{c}(f(b))=\overline{f[\mathfrak{c}(b)]}\subseteq \overline{\mathfrak{c}(x)}=\mathfrak{c}(x)$. This makes $\mathfrak{c}(f(b))\cap\mathfrak{o}(x)=\mathsf{O}$. Therefore $\overline{N}$ is nowhere dense in $\mathfrak{c}(f(b))$ which contradicts that $\overline{N}$ is maximal nowhere dense in $M$. Therefore $f_{-1}[N]$ is maximal nowhere dense in $L$.
	\end{proof}

	In the next result, we discuss localic maps that preserve maximal nowhere dense sublocales. 
	
	Recall that a frame homomorphism $h:M\rightarrow L$ is \textit{open} if it has a left adjoint $h_{!}$  satisfying the Frobenius identity: $$h_{!}(h(a)\wedge b)=h_{!}(a)\wedge b$$ for every $a\in M$ and $b\in L$. This is equivalent to saying that $h_{*}$ is an open localic map. Because, by \cite{BP2}, every open frame homomorphism is weakly open, it follows that for each open localic map $f$, $f^{*}$ is weakly open. 
	\begin{obs}\label{obsopen}
		Not every open localic map sends dense elements to dense elements. Consider the localic map $f:L\rightarrow\two$ where $L$ is non-Boolean and $\two$ is the two-element locale. Since $\two$ is Boolean, every sublocale of $\two$ is open making the localic image of each open sublocale of $L$ to be open in $\two$. Hence $f$ is open. However, $f$ does not send all dense elements to dense elements since the only (dense) element of $L$ that is mapped to $1_{\two}$ (the only dense element of $\two$) is $1$. But $1$ is not the only dense element of $L$ otherwise $L$ is Boolean.
	\end{obs}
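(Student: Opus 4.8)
The plan is to prove this negative statement by exhibiting a single explicit counterexample, namely the canonical localic map into the two-element locale $\two$ from an arbitrary non-Boolean locale $L$. First I would fix a non-Boolean locale $L$ and take $f\colon L\to\two$ to be the localic map whose associated frame homomorphism $f^{*}\colon\two\to L$ is the unique one, sending $0$ to $0_{L}$ and $1$ to $1_{L}$. Computing the right adjoint gives $f(a)=\bigvee\{t\in\two:f^{*}(t)\leq a\}$, so that $f(a)=1$ precisely when $1_{L}\leq a$, that is, when $a=1_{L}$, and $f(a)=0$ otherwise.

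Next I would check that $f$ is open. Since $\two$ is Boolean, every sublocale of $\two$ is complemented and therefore open; in particular the set-theoretic image $f[A]$ of any open $A\in\mathcal{S}(L)$ is a sublocale of $\two$, by the description of $f[-]\colon\mathcal{S}(L)\to\mathcal{S}(\two)$ recalled in the preliminaries, and hence open. Thus $f$ is an open localic map, and it remains only to show that $f$ fails to send all dense elements of $L$ to dense elements of $\two$. The sole dense element of $\two$ is $1$, because $0^{*}=1\neq 0$. By the previous paragraph, $f(a)$ equals this dense element exactly when $a=1_{L}$.

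The crux is therefore to produce a dense element $d\in L$ with $d\neq 1_{L}$; this is where non-Booleanness enters. I would invoke the elementary fact that $a\vee a^{*}$ is dense for every $a\in L$: since pseudocomplementation turns joins into meets, $(a\vee a^{*})^{*}=a^{*}\wedge a^{**}=0$, the last equality holding because $a^{*}\wedge (a^{*})^{*}=0$ by definition of the pseudocomplement of $a^{*}$. Consequently, if $1_{L}$ were the only dense element of $L$, then $a\vee a^{*}=1_{L}$ for every $a$, making every element complemented and $L$ Boolean, contrary to hypothesis. Hence some dense $d\neq 1_{L}$ exists, and $f(d)=0$ is not dense in $\two$, so $f$ is an open localic map that does not preserve density.

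I expect the only genuinely nontrivial step to be this last one, namely the observation that $1_{L}$ being the unique dense element forces $L$ to be Boolean; everything else (the description of $f$, its openness via the Booleanness of $\two$, and the triviality of the dense-element computation in $\two$) is immediate. The single fact doing the real work is that $a\vee a^{*}$ is always dense.
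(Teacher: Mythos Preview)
Your proposal is correct and follows essentially the same approach as the paper's own justification, which is embedded directly in the observation rather than given as a separate proof. You supply more detail than the paper does---in particular, you make explicit the computation that $f(a)=1$ iff $a=1_{L}$, and you justify the claim ``$1$ is not the only dense element of $L$ otherwise $L$ is Boolean'' via the standard fact that $a\vee a^{*}$ is always dense---but the structure and the example are identical. One minor remark: the inference ``every sublocale of $\two$ is complemented and therefore open'' is not a valid general implication (complemented sublocales need not be open), though the conclusion is harmless here since $\two$ has only the two sublocales $\mathsf{O}=\mathfrak{o}(0)$ and $\two=\mathfrak{o}(1)$, both of which are open; the paper's phrasing has the same looseness.
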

	We recall from \cite{PPT} that if a localic map $f:L\rightarrow M$ is open, then $f_{-1}[\overline{A}]=\overline{f_{-1}[A]}$ for each $A\in\mathcal{S}(M)$. 
	\begin{proposition}\label{smndmap}
		Let $f:L\rightarrow M$ be an open localic map that sends dense elements to dense elements. Then $f$ preserves maximal nowhere dense sublocales.
	\end{proposition}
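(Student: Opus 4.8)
The plan is to work throughout with closed nowhere dense sublocales --- legitimate by Proposition~\ref{mndcmnd} --- and to transfer the obstruction to maximality across $f$ via $f_{-1}[-]$, whose good behaviour is exactly what openness of $f$ buys us. Put $a=\bigwedge N$. Since the closure of a sublocale $S$ is $\mathfrak{c}(\bigwedge S)$ and $f$ preserves infima, $\overline{N}=\mathfrak{c}(a)$ and $\overline{f[N]}=\mathfrak{c}(\bigwedge f[N])=\mathfrak{c}(f(a))$; by Proposition~\ref{mndcmnd} it is then enough to prove that $\mathfrak{c}(f(a))$ is maximal nowhere dense in $M$, given that $\mathfrak{c}(a)$ is maximal nowhere dense in $L$. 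First I would note that $\mathfrak{c}(f(a))$ is nowhere dense: $\mathfrak{c}(a)$ nowhere dense forces $a$ to be dense, $f$ sends dense elements to dense elements, so $f(a)$ is dense and hence $\mathfrak{c}(f(a))$ is nowhere dense; moreover $\mathfrak{c}(f(a))\neq\mathsf{O}$, since $a\neq 1_{L}$ (because $\overline{N}\neq\mathsf{O}$) gives $f(a)\neq 1_{M}$.

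For the maximality part I would argue by contradiction, via Proposition~\ref{mndcmnd}(4): assume $\mathfrak{c}(f(a))$ is nowhere dense in some closed nowhere dense sublocale $\mathfrak{c}(d)$ of $M$ (so $d$ is dense and $d\le f(a)$), rewrite this, using Observation~\ref{obsndsubspace} for the complemented sublocale $\mathfrak{c}(d)$, as the inclusion $\mathfrak{c}(d)\subseteq\overline{\mathfrak{c}(d)\cap\mathfrak{o}(f(a))}$, and then push it through $f_{-1}[-]$. The point is that, $f$ being open, $f_{-1}[-]$ commutes with closure, while for any localic map it preserves binary intersections and carries closed and open sublocales to $\mathfrak{c}_{L}(f^{*}(-))$ and $\mathfrak{o}_{L}(f^{*}(-))$; this would give
\[
\mathfrak{c}(f^{*}(d))=f_{-1}[\mathfrak{c}(d)]\subseteq\overline{f_{-1}[\mathfrak{c}(d)\cap\mathfrak{o}(f(a))]}=\overline{\mathfrak{c}(f^{*}(d))\cap\mathfrak{o}(f^{*}(f(a)))}.
\]
Then I would invoke the counit inequality $f^{*}(f(a))\le a$ for $f^{*}\dashv f$: it yields $\mathfrak{o}(f^{*}(f(a)))\subseteq\mathfrak{o}(a)$, hence $\mathfrak{c}(f^{*}(d))\subseteq\overline{\mathfrak{c}(f^{*}(d))\cap\mathfrak{o}(a)}$, and it also gives $f^{*}(d)\le f^{*}(f(a))\le a$, hence $\mathfrak{c}(a)\subseteq\mathfrak{c}(f^{*}(d))$. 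By Observation~\ref{obsndsubspace} once more (with $\mathfrak{c}(f^{*}(d))$ complemented and $L\smallsetminus\mathfrak{c}(a)=\mathfrak{o}(a)$), this last inclusion says precisely that $\overline{N}=\mathfrak{c}(a)$ is nowhere dense in $\mathfrak{c}(f^{*}(d))$. Since $\mathfrak{c}(f^{*}(d))=f_{-1}[\mathfrak{c}(d)]$ is itself closed and nowhere dense in $L$ --- $f$ open makes $f^{*}$ weakly open, and then $f_{-1}$ preserves closed nowhere dense sublocales, exactly as in the proof of Proposition~\ref{mapmnd} --- this contradicts Proposition~\ref{mndcmnd}(4) for the maximal nowhere dense sublocale $\overline{N}$.

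The contradiction shows $\overline{f[N]}=\mathfrak{c}(f(a))$ is maximal nowhere dense in $M$, whence, by Proposition~\ref{mndcmnd}, so is $f[N]$. I expect the main obstacle to be the transport step: one must be sure that $f_{-1}[-]$ genuinely commutes with the closure operation (this fails for arbitrary localic maps, so openness is indispensable here) and that the intersection/open/closed identities leave one inside $\overline{\mathfrak{c}(f^{*}(d))\cap\mathfrak{o}(a)}$ rather than something larger, which is precisely what $f^{*}(f(a))\le a$ guarantees. Everything else is routine bookkeeping with Proposition~\ref{mndcmnd}, Observation~\ref{obsndsubspace}, and the preimage identities for closed and open sublocales.
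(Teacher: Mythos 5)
Your proof is correct and follows essentially the same route as the paper's: reduce to the closed sublocale $\overline{f[N]}=\mathfrak{c}(f(\bigwedge N))$, encode "nowhere dense in a closed nowhere dense $\mathfrak{c}(d)$" via Observation~\ref{obsndsubspace}, transport the resulting inclusion through $f_{-1}[-]$ using openness (to commute with closure) together with the preimage identities, and use the counit inequality $f^{*}(f(a))\le a$ to land back on $\overline{N}$ and contradict Proposition~\ref{mndcmnd}. Your explicit verification that $\mathfrak{c}(a)\subseteq\mathfrak{c}(f^{*}(d))$, which the paper leaves implicit, is a welcome extra check.
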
 
	\begin{proof}
		Let $N$ be a maximal nowhere dense sublocale of $L$. We show that $\overline{f[N]}$ is maximal nowhere dense in $M$. By \cite[Lemma 2.33]{N}, $f[N]$ is nowhere dense in $M$ so that $\overline{f[N]}=\mathfrak{c}\left(f(\bigwedge N)\right)$ is nowhere dense in $M$. Suppose that $\mathfrak{c}\left(f(\bigwedge N)\right)\in\NdS(\mathfrak{c}(y))$ for some $\mathfrak{c}(y)\in\NdS(M)$. By Observation \ref{obsndsubspace}, $\mathfrak{c}(y)\subseteq\overline{\mathfrak{c}(y)\cap\mathfrak{o}\left(f(\bigwedge N)\right)}$. Therefore $f_{-1}[\mathfrak{c}(y)]=\mathfrak{c}(f^{*}(y))\subseteq f_{-1}\left[\overline{\mathfrak{c}(y)\cap\mathfrak{o}\left(f(\bigwedge N)\right)}\right]$. Openness of $f$ gives $f_{-1}\left[\overline{\mathfrak{c}(y)\cap\mathfrak{o}\left(f(\bigwedge N)\right)}\right]=\overline{f_{-1}[\mathfrak{c}(y)\cap\mathfrak{o}\left(f(\bigwedge N)\right)]}$, so that
		\begin{align*}
			\mathfrak{c}(f^{*}(y))&\quad\subseteq\quad\overline{f_{-1}\left[\mathfrak{c}(y)\cap \mathfrak{o}\left(f(\bigwedge N)\right)\right]}\\
			&\quad=\quad
			\overline{\mathfrak{c}(f^{*}(y))\cap \mathfrak{o}\left(f^{*}\left(f\left(\bigwedge N\right)\right)\right)}\\
			&\quad\subseteq\quad
			\overline{\mathfrak{c}(f^{*}(y))\cap \mathfrak{o}\left(\bigwedge N\right)}\\
			&\quad=\quad
			\overline{\mathfrak{c}(f^{*}(y))\cap \left(L\smallsetminus\overline{N}\right)}.
		\end{align*} Therefore $\mathfrak{c}(f^{*}(y))\cap \left(L\smallsetminus\overline{\mathfrak{c}(f^{*}(y))\cap \left(L\smallsetminus\overline{N}\right)}\right)=\mathsf{O}$. This makes $N\in\NdS(\mathfrak{c}(f^{*}(y)))$, where $\mathfrak{c}(f^{*}(y))\in\NdS(L)$, contradicting that $N$ is maximal nowhere dense in $L$. Therefore $\overline{f[N]}$ is maximal nowhere dense so that by Proposition \ref{mndcmnd}, $f[N]$ is maximal nowhere dense sublocales.
	\end{proof}
	Since in Proposition \ref{mapmnd} we only needed a condition that both $f$ and $f^{*}$ send dense elements to dense elements and because the left adjoint of an open localic map is weakly open, we have the following result.
	\begin{corollary}
		Every open localic map that sends dense elements to dense elements preserves and reflects maximal nowhere dense sublocales.
	\end{corollary}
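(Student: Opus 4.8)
The plan is to separate the assertion into its two halves, each of which reduces to a result already established above. Preservation is immediate: the hypotheses on $f$ (open, and dense-element-preserving) are exactly those of Proposition \ref{smndmap}, so $f[N]$ is maximal nowhere dense in $M$ for every maximal nowhere dense sublocale $N$ of $L$. Hence the only content of the corollary beyond Proposition \ref{smndmap} is the reflection half, namely that $f_{-1}[N]$ is maximal nowhere dense in $L$ whenever $N$ is maximal nowhere dense in $M$.

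For reflection I would appeal to Proposition \ref{mapmnd}, whose conclusion is precisely that $f_{-1}$ preserves maximal nowhere dense sublocales, but which carries two hypotheses: that $f$ sends dense elements to dense elements and that $f^{*}$ does likewise. The first is granted. For the second I would use the chain recalled in the text: since $f$ is an open localic map, its left adjoint $f^{*}$ is an open frame homomorphism, hence weakly open by \cite{BP2}, and a weakly open frame homomorphism carries dense elements to dense elements (so from $d^{*}=0_{M}$ we get $(f^{*}(d))^{*}=0_{L}$). With both hypotheses of Proposition \ref{mapmnd} verified, that proposition delivers the reflection statement.

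Combining the two halves gives the corollary: $f$ preserves maximal nowhere density through $f[-]$ and reflects it through $f_{-1}[-]$. I do not anticipate a real obstacle, since Propositions \ref{smndmap} and \ref{mapmnd} are already proved; the only link that is not purely bookkeeping is the implication ``$f$ open $\Longrightarrow$ $f^{*}$ dense-preserving'', which itself rests on the two facts noted in the text, that an open frame homomorphism is weakly open and that a weakly open frame homomorphism preserves density.
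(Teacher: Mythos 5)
Your proposal matches the paper's own argument: preservation comes from Proposition \ref{smndmap}, and reflection comes from Proposition \ref{mapmnd} after noting that the left adjoint $f^{*}$ of an open localic map is weakly open (by \cite{BP2}) and hence sends dense elements to dense elements. This is exactly the route the paper takes, so there is nothing to add.
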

	
	In the next result, we discuss preservation and reflection of strongly  homogeneous maximal nowhere dense sublocales by localic maps. 
	
	\begin{proposition}\label{hmndpresereflec}
		Let $f:L\rightarrow M$ be an open localic map that sends dense elements to dense elements. 
		\begin{enumerate}
			\item Then $f$ preserves weakly homogeneous maximal nowhere dense sublocales.
			\item If $f$ is injective, then it reflects (strongly) homogeneous maximal nowhere dense sublocales.
		\end{enumerate}
	\end{proposition}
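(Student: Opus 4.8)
\emph{Plan.} I will treat the two parts by transporting $N$-regular-closed sublocales across $f$: for (1), via the image operator $f[-]$, using that $f$ preserves maximal nowhere density (Proposition~\ref{smndmap}); for (2), via the preimage operator $f_{-1}[-]$, using that it preserves maximal nowhere density (Proposition~\ref{mapmnd}). Both reductions are glued together with the monotonicity statement Proposition~\ref{mndprop}(1) and the standard identities $f_{-1}[\mathfrak{c}(x)]=\mathfrak{c}(f^{*}(x))$, $f_{-1}[\mathfrak{o}(x)]=\mathfrak{o}(f^{*}(x))$, $f[f_{-1}[T]]\subseteq T$, together with the fact that, for $B\subseteq S\subseteq L$, the $S$-closure of $B$ equals $\overline{B}\cap S$. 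For (1), let $F$ be weakly homogeneous maximal nowhere dense in $L$; then $f[F]$ is nowhere dense in $M$ by \cite[Lemma 2.33]{N}. Given a non-void $f[F]$-regular-closed sublocale, write it as $R=\overline{f[F]\cap\mathfrak{o}(b)}\cap f[F]$ with $f[F]\cap\mathfrak{o}(b)\neq\mathsf{O}$, and set $A=\overline{F\cap\mathfrak{o}(f^{*}(b))}\cap F$, an $F$-regular-closed sublocale. The key point is $A\neq\mathsf{O}$: if $F\cap\mathfrak{o}(f^{*}(b))=\mathsf{O}$ then $F\subseteq\mathfrak{c}(f^{*}(b))=f_{-1}[\mathfrak{c}(b)]$, so $f[F]\subseteq f[f_{-1}[\mathfrak{c}(b)]]\subseteq\mathfrak{c}(b)$, forcing $f[F]\cap\mathfrak{o}(b)=\mathsf{O}$, a contradiction. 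By weak homogeneity $A$ is maximal nowhere dense in $L$, hence $f[A]$ is maximal nowhere dense in $M$ by Proposition~\ref{smndmap}. From $A\subseteq\overline{F\cap\mathfrak{o}(f^{*}(b))}$, $f[\mathfrak{o}(f^{*}(b))]=f[f_{-1}[\mathfrak{o}(b)]]\subseteq\mathfrak{o}(b)$, and $f[\overline{B}]\subseteq\overline{f[B]}$, we obtain $f[A]\subseteq\overline{f[F]\cap\mathfrak{o}(b)}$, and also $f[A]\subseteq f[F]$, so $f[A]\subseteq R$. Since $R$ is a sublocale of the nowhere dense sublocale $f[F]$, it is nowhere dense, and Proposition~\ref{mndprop}(1) gives that $R$ is maximal nowhere dense in $M$. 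Thus $f[F]$ is weakly homogeneous maximal nowhere dense.

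For (2), assume $f$ injective. Then $f_{-1}[f[B]]=B$ for all $B\in\mathcal{S}(L)$, $f[X\cap Y]=f[X]\cap f[Y]$, and, because $f$ preserves binary meets, $f$ reflects order: $f(a)\leq f(b)$ implies $a=a\wedge b$, hence $a\leq b$. Suppose $f[F]$ is (strongly) homogeneous maximal nowhere dense in $M$. Then $F=f_{-1}[f[F]]$ is closed in $L$ (as $f_{-1}$ of a closed sublocale is closed, in the strongly case; in the weakly case drop this), and $F$ is nowhere dense in $L$ since $f$ is open, so $f_{-1}$ preserves (closed) nowhere density by \cite[theorem 2.29]{N}. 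Let $A=\overline{U}\cap F$ be a non-void $F$-regular-closed sublocale, $U=F\cap\mathfrak{o}(c)$; then $f[U]=f[F]\cap f[\mathfrak{o}(c)]$ is open in $f[F]$ because $f$ is an open localic map. Using $f[X\cap Y]=f[X]\cap f[Y]$ we get $f[A]=f[\overline{U}]\cap f[F]$, and I claim $f[\overline{U}]\cap f[F]=\overline{f[U]}\cap f[F]$: the inclusion $\subseteq$ is automatic, and for $\supseteq$, an element of the right-hand side is $f(s)$ with $s\in F$ and $f(s)\geq\bigwedge f[U]=f(\bigwedge U)$, whence $s\in\overline{U}$ by order-reflection, so $f(s)\in f[\overline{U}]$. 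Therefore $f[A]=\overline{f[U]}\cap f[F]$ is a non-void $f[F]$-regular-closed sublocale, hence maximal nowhere dense in $M$ by hypothesis. Since $f$ is open, $f^{*}$ is weakly open and so sends dense elements to dense elements, so Proposition~\ref{mapmnd} applies and $f_{-1}$ preserves maximal nowhere density; consequently $A=f_{-1}[f[A]]$ is maximal nowhere dense in $L$. As $A$ was an arbitrary non-void $F$-regular-closed sublocale, $F$ is (strongly) homogeneous maximal nowhere dense in $L$.

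\emph{Main obstacle.} The delicate point is the equality $f[\overline{U}]\cap f[F]=\overline{f[U]}\cap f[F]$ in (2): the nontrivial inclusion genuinely uses injectivity, since for a non-injective open $f$ the closure $\overline{f[U]}$ may extend beyond the sublocale $f[L]$ and the two sides differ; it is precisely here that ``$f$ reflects order'', a consequence of $f$ preserving finite meets, is needed. By contrast, verifying $A\neq\mathsf{O}$ in (1), and the closedness and nowhere density of $F$ in (2), is routine bookkeeping with the image/preimage identities, and the final step in each part is a direct appeal to Proposition~\ref{mndprop}(1), respectively Proposition~\ref{mapmnd}.
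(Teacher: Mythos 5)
Your part (1) is essentially the paper's own argument: the same pulled-back regular-closed sublocale $\overline{\mathfrak{o}(f^{*}(b))\cap F}\cap F$, the same non-voidness argument via $f[F]\subseteq f[f_{-1}[\mathfrak{c}(b)]]\subseteq\mathfrak{c}(b)$, the same push-forward through Proposition~\ref{smndmap}, and the same concluding appeal to the monotonicity statement in Proposition~\ref{mndprop} (which is indeed part (1) of that proposition, despite the paper's citation of part (2)). Nothing to add there.

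In part (2), however, you prove a different statement from the one the paper intends, and the difference matters. Throughout the paper (see the corollary following Proposition~\ref{smndmap} and the abstract), ``$f$ reflects'' a class of sublocales means that $f_{-1}[-]$ preserves it: if $K\in\mathcal{S}(M)$ is homogeneous maximal nowhere dense, then $f_{-1}[K]$ is homogeneous maximal nowhere dense in $L$, where $K$ is an \emph{arbitrary} sublocale of $M$. You instead prove: if $F\in\mathcal{S}(L)$ and $f[F]$ is h.m.n.d.\ in $M$, then $F$ is h.m.n.d.\ in $L$. Your statement follows from the paper's (since $f_{-1}[f[F]]=F$ for injective $f$) but does not imply it: for an h.m.n.d.\ sublocale $K$ with $K\nsubseteq f[L]$ you cannot write $K=f[F]$, and taking $F=f_{-1}[K]$ does not repair this because $f[f_{-1}[K]]$ may be a proper sublocale of $K$, and homogeneous maximal nowhere density is not inherited by arbitrary sublocales. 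The paper's route also sidesteps the equality you flag as the main obstacle: starting from a non-void $\overline{\mathfrak{o}(x)\cap f_{-1}[K]}\cap f_{-1}[K]$ with $x\in L$, it passes forward to $\overline{\mathfrak{o}(f(x))\cap K}\cap K$ (non-void because $f^{*}(f(x))=x$ by injectivity), applies the hypothesis on $K$ together with Proposition~\ref{mapmnd}, and then identifies $f_{-1}\left[\overline{\mathfrak{o}(f(x))\cap K}\cap K\right]$ with $\overline{\mathfrak{o}(x)\cap f_{-1}[K]}\cap f_{-1}[K]$ using $f_{-1}[\overline{A}]=\overline{f_{-1}[A]}$ for open $f$ and injectivity once more; the delicate identity $f[\overline{U}]\cap f[F]=\overline{f[U]}\cap f[F]$ never arises. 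Your verification of that identity via order reflection is correct as far as it goes, but to establish the proposition as the paper uses it you should restate (2) in the $f_{-1}$ form and rerun the argument accordingly.
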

	\begin{proof}
		(1) Let $F$ be a weakly homogeneous maximal nowhere dense sublocale of $L$ and choose a non-void sublocale $\overline{\mathfrak{o}(y)\cap f[F]}\cap f[F]$ where $y\in M$. Such a sublocale is $f[F]$-regular-closed. The $F$-regular-closed sublocale $\overline{\mathfrak{o}(f^{*}(y))\cap F}\cap F$ is non-void otherwise, $\mathfrak{o}(f^{*}(y))\cap F=\mathsf{O}$ so that $f[F]\subseteq f[\mathfrak{c}(f^{*}(y))]=f[f_{-1}[\mathfrak{c}(y)]]\subseteq\mathfrak{c}(y)$. Therefore $f[F]\cap \mathfrak{o}(y)=\mathsf{O}$ which is not possible. Since $F$ is strongly h.m.n.d, $\overline{\mathfrak{o}(f^{*}(y))\cap F}\cap F$ is m.n.d. Because open localic maps that send dense elements to dense elements preserve maximal nowhere dense sublocales (by Proposition \ref{smndmap}), $f\left[\overline{\mathfrak{o}(f^{*}(y))\cap F}\cap F\right]$ is m.n.d in $M$. Since \begin{align*}
			f\left[\overline{\mathfrak{o}(f^{*}(y))\cap F}\cap F\right]&\quad\subseteq\quad f\left[\overline{\mathfrak{o}(f^{*}(y))\cap F}\right]\cap f[F]\\
			&\quad\subseteq\quad \overline{f[\mathfrak{o}(f^{*}(y))\cap F]}\cap f[F]\\
			&\quad\subseteq\quad \overline{f[\mathfrak{o}(f^{*}(y))]\cap f[F]}\cap f[F]\\
			&\quad\subseteq\quad\overline{\mathfrak{o}(y)\cap f[F]}\cap f[F]
		\end{align*} and because $\overline{\mathfrak{o}(y)\cap f[F]}\cap f[F]$ is nowhere dense in $M$, it follows from Proposition \ref{mndprop}(2) that $\overline{\mathfrak{o}(y)\cap f[F]}\cap f[F]$ is m.n.d. Thus $f[F]$ is weakly homogeneous maximal nowhere dense in $M$.

		(2) We only prove reflection of weakly homogeneous maximal nowhere dense sublocales. That of homogeneous maximal nowhere dense sublocales follows the same sketch. Let $K$ be a weakly homogeneous maximal nowhere dense sublocale of $M$ and consider a non-void sublocale $\overline{\mathfrak{o}(x)\cap f_{-1}[K]}\cap f_{-1}[K]$ where $x\in L$. We must show that this $f_{-1}[K]$-regular-closed sublocale is m.n.d. We have that $\overline{\mathfrak{o}(f(x))\cap K}\cap K$ is a non-void $K$-regular-closed sublocale. To see that is it non-void, observe that having $\overline{\mathfrak{o}(f(x))\cap K}\cap K=\mathsf{O}$ implies that $\mathfrak{o}(f(x))\cap K=\mathsf{O}$ so that $$\mathsf{O}=f_{-1}[\mathfrak{o}(f(x))]\cap f_{-1}[K]=\mathfrak{o}(f^{*}(f(x)))\cap f_{-1}[K]=\mathfrak{o}(x)\cap f_{-1}[K]$$ where the latter equality follows from injectivity of $f$. This cannot be true, so $\overline{\mathfrak{o}(f(x))\cap K}\cap K$ is non-void. Since $K$ is weakly homogeneous maximal nowhere dense, $\overline{\mathfrak{o}(f(x))\cap K}\cap K$ is m.n.d in $M$. Because open localic maps are weakly open and $f$ sends dense elements to dense elements, it follows from Proposition \ref{mapmnd} that $f_{-1}[\overline{\mathfrak{o}(f(x))\cap K}\cap K]$ is m.n.d. Observe that $$f_{-1}[\overline{\mathfrak{o}(f(x))\cap K}\cap K]=f_{-1}[\overline{\mathfrak{o}(f(x))\cap K}]\cap f_{-1}[K]=\overline{f_{-1}[\mathfrak{o}(f(x))]\cap f_{-1}[K]}\cap f_{-1}[K]$$ where the latter equality follows from openness of $f$. By injectivity of $f$, $$f_{-1}[\overline{\mathfrak{o}(f(x))\cap K}\cap K]=\overline{\mathfrak{o}(x)\cap f_{-1}[K]}\cap f_{-1}[K]$$making $\overline{\mathfrak{o}(x)\cap f_{-1}[K]}\cap f_{-1}[K]$ m.n.d. in $L$. Thus $f_{-1}[K]$ is weakly homogeneous maximal nowhere dense in $L$.
	\end{proof}
	\begin{obs}
		For the preservation of homogeneous maximal nowhere dense sublocales, the localic map $f$ in Proposition \ref{hmndpresereflec} must also preserve closed sublocales. That is, it must also be closed which is a rather too stringent condition.
	\end{obs}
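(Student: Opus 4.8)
The plan is to isolate, from the proof of Proposition \ref{hmndpresereflec}(1), exactly the gap between ``weakly homogeneous maximal nowhere dense'' and the full notion, and to show that closing this gap is a closedness requirement on $f$. The guiding observation is that, by Definition \ref{hmnd}, a homogeneous maximal nowhere dense sublocale is required to be closed, whereas its weak version drops precisely this clause. Thus, for $f$ to preserve the full notion, it is necessary that $f[F]$ be closed in $M$ for every homogeneous maximal nowhere dense $F$ of $L$; the remaining homogeneity condition on $f[F]$ is already delivered by Proposition \ref{hmndpresereflec}(1), since such an $F$ is in particular weakly homogeneous maximal nowhere dense, so its image is weakly homogeneous maximal nowhere dense in $M$.

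Next I would translate the closedness of $f[F]$ into a condition on $f$. Since $F$ is closed, write $F=\mathfrak{c}(a)$. Recalling that $\overline{S}=\mathfrak{c}\left(\bigwedge S\right)$ for any sublocale $S$, and that $f$, being a localic map, preserves all infima, we compute $\bigwedge f[\mathfrak{c}(a)]=f\left(\bigwedge\mathfrak{c}(a)\right)=f(a)$, whence $\overline{f[\mathfrak{c}(a)]}=\mathfrak{c}(f(a))$. Consequently $f[F]$ is closed in $M$ if and only if $f[\mathfrak{c}(a)]=\mathfrak{c}(f(a))$, which is exactly the condition for $f$ to carry the closed sublocale $\mathfrak{c}(a)$ onto a closed sublocale. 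So the failure of $f[F]$ to be closed is governed entirely by whether $f$ behaves like a closed map on $F$.

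The main obstacle is that the argument, taken literally, only forces $f[\mathfrak{c}(a)]=\mathfrak{c}(f(a))$ for those $a$ whose closed sublocale $\mathfrak{c}(a)$ happens to be homogeneous maximal nowhere dense, not for all closed sublocales. To bridge this to genuine closedness of $f$ one imposes the blanket hypothesis that $f$ send every closed sublocale to a closed sublocale; this is the clean and uniform way to secure the missing closedness clause in Definition \ref{hmnd}, and it is precisely the comparatively stringent condition the Observation records. That openness together with preservation of dense elements does not already yield it is clear: an open localic map need not be closed, and in general the standing hypotheses only produce $f[\mathfrak{c}(a)]$ as a dense sublocale of $\mathfrak{c}(f(a))$ rather than as $\mathfrak{c}(f(a))$ itself, so the equality $f[\mathfrak{c}(a)]=\mathfrak{c}(f(a))$ must be demanded separately by requiring $f$ to be closed.
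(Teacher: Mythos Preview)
Your reasoning is correct and in fact supplies more than the paper does: in the paper this Observation is stated as a bare remark with no accompanying argument. Your analysis --- that Proposition \ref{hmndpresereflec}(1) already secures the weak version for the image, so the only missing ingredient in Definition \ref{hmnd} is closedness of $f[F]$, and that for $F=\mathfrak{c}(a)$ this amounts to $f[\mathfrak{c}(a)]=\mathfrak{c}(f(a))$ --- is exactly the justification one would give, and it makes explicit why the author singles out closedness of $f$ as the additional (and restrictive) hypothesis.
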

	Open localic maps also allow us to study, under certain conditions, preservation and reflection of inaccessible and almost inaccessible sublocales as presented below. 
	\begin{proposition}\label{inaccmap}
		Let $f:L\rightarrow M$ be an open and injective localic map. Then for all open $S\in\mathcal{S}(L)$,  \begin{enumerate}
			\item $f[\mathcal{S}_{\Inac}(S)]\subseteq \mathcal{S}_{\Inac}(f[S])$, and
			\item $f[\mathcal{S}_{\Ainac}(S)]\subseteq \mathcal{S}_{\Ainac}(f[S])$.
		\end{enumerate}
	\end{proposition}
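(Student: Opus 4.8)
The plan is to push everything through the associated frame homomorphism $f^{*}$, using the one structural fact we really need: since $f$ is open, $f^{*}$ is an \emph{open} frame homomorphism, and open frame homomorphisms preserve the Heyting implication $\rightarrow$ (an immediate consequence of the Frobenius identity, whence they also preserve pseudocomplements). Everything else is bookkeeping with the adjunction $f[-]\dashv f_{-1}[-]$ and the preimage identities $f_{-1}[\mathfrak{o}_{M}(x)]=\mathfrak{o}_{L}(f^{*}(x))$, $f_{-1}[\mathfrak{c}_{M}(x)]=\mathfrak{c}_{L}(f^{*}(x))$.

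First I would set up the common ground. As $f$ is open, $f[S]$ is open in $M$, so $M\smallsetminus f[S]=\mathfrak{c}_{M}(u)$ for some $u\in M$; and $f[T]\subseteq f[S]$, so $f[T]\in\mathcal{S}(f[S])$. Injectivity of $f$ gives $f_{-1}[f[S]]=S$, and combining this with the two preimage identities forces $S=\mathfrak{o}_{L}(f^{*}(u))$ and $L\smallsetminus S=\mathfrak{c}_{L}(f^{*}(u))$; write $s=f^{*}(u)=\bigwedge(L\smallsetminus S)$. The key transfer statement I would then prove is: \emph{if $x\in M$ is $(M\smallsetminus f[S])$-dense, then $f^{*}(x)$ is $(L\smallsetminus S)$-dense.} Indeed, density of $x$ in $\mathfrak{c}_{M}(u)$ amounts to $x\rightarrow u=u$; applying $f^{*}$ and using that it preserves $\rightarrow$ gives $f^{*}(x)\rightarrow s=f^{*}(x\rightarrow u)=f^{*}(u)=s$, with $f^{*}(x)\geq s$ since $x\geq u$, so $f^{*}(x)$ is dense in $\mathfrak{c}_{L}(s)=L\smallsetminus S$.

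For (1), take $T\in\mathcal{S}_{\Inac}(S)$. By Proposition \ref{inacccharact} applied in $M$ with respect to $f[S]$, it suffices to show $f[T]\cap\mathfrak{c}_{M}(x)=\mathsf{O}$ for every $(M\smallsetminus f[S])$-dense $x$. By the transfer statement $f^{*}(x)$ is $(L\smallsetminus S)$-dense, so Proposition \ref{inacccharact} for $T$ gives $T\subseteq\mathfrak{o}_{L}(f^{*}(x))=f_{-1}[\mathfrak{o}_{M}(x)]$; by the adjunction this is exactly $f[T]\subseteq\mathfrak{o}_{M}(x)$, whence $f[T]\cap\mathfrak{c}_{M}(x)\subseteq\mathfrak{o}_{M}(x)\cap\mathfrak{c}_{M}(x)=\mathsf{O}$.

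For (2), take $T\in\mathcal{S}_{\Ainac}(S)$ and an $(M\smallsetminus f[S])$-nowhere dense $N$; the goal is $f[T]\subseteq\cl_{f[S]}\big(f[S]\cap(M\smallsetminus\overline{N})\big)$. Since a sublocale is nowhere dense iff its closure is, and $\overline{N}\subseteq M\smallsetminus f[S]$, I may write $\overline{N}=\mathfrak{c}_{M}(z)$, and then $z=\bigwedge\overline{N}$ is $(M\smallsetminus f[S])$-dense; by the transfer statement $f^{*}(z)$ is $(L\smallsetminus S)$-dense (note $f^{*}(z)\geq s$), so $K:=f_{-1}[\overline{N}]=\mathfrak{c}_{L}(f^{*}(z))$ is a closed $(L\smallsetminus S)$-nowhere dense sublocale. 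Almost $S$-inaccessibility of $T$, in its defining inclusion for $K$, gives $T\subseteq\cl_{S}\big(S\cap(L\smallsetminus K)\big)=\cl_{S}\big(S\cap\mathfrak{o}_{L}(f^{*}(z))\big)$. Applying $f[-]$ and using $\cl_{S}(A)=\overline{A}\cap S$, the inequality $f[\overline{A}]\subseteq\overline{f[A]}$, $\mathfrak{o}_{L}(f^{*}(z))=f_{-1}[\mathfrak{o}_{M}(z)]$, and $f\big[f_{-1}[\mathfrak{o}_{M}(z)]\big]\subseteq\mathfrak{o}_{M}(z)$:
\[
f[T]\ \subseteq\ \overline{f\big[S\cap f_{-1}[\mathfrak{o}_{M}(z)]\big]}\cap f[S]\ \subseteq\ \overline{f[S]\cap\mathfrak{o}_{M}(z)}\cap f[S]\ =\ \cl_{f[S]}\big(f[S]\cap(M\smallsetminus\overline{N})\big),
\]
using $M\smallsetminus\overline{N}=\mathfrak{o}_{M}(z)$ and that $f[S]$ is complemented, so closures in $f[S]$ are intersections with $f[S]$ of closures in $M$. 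Since $N$ was arbitrary, $f[T]$ is almost $f[S]$-inaccessible.

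The routine parts are exactly this adjunction/preimage bookkeeping. The one genuine ingredient — and the step I expect to require the most care — is the transfer statement, resting on (a) open frame homomorphisms preserving $\rightarrow$ and (b) injectivity of $f$: the latter is precisely what makes $f_{-1}[M\smallsetminus f[S]]$ equal $L\smallsetminus S$ rather than a possibly strictly smaller closed sublocale, so that $f^{*}$ turns $(M\smallsetminus f[S])$-density into honest $(L\smallsetminus S)$-density.
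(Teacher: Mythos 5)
Your argument is correct, but it takes a genuinely different --- and much longer --- route than the paper, whose entire proof is the single line ``Follows from Proposition \ref{almostinaccesprop}(3), (2) and (1)''. Unpacked: since $f$ and $S$ are open, $f[S]$ is an open sublocale of $M$; by Proposition \ref{almostinaccesprop}(3) an open sublocale is inaccessible with respect to itself; by part (2) every sublocale of an inaccessible sublocale is again inaccessible; and by part (1) inaccessibility implies almost inaccessibility. Hence \emph{every} sublocale of $f[S]$, in particular $f[T]$ for any $T\subseteq S$, lies in $\mathcal{S}_{\Inac}(f[S])\subseteq\mathcal{S}_{\Ainac}(f[S])$ --- the hypothesis that $T$ is ($S$-almost) inaccessible and the injectivity of $f$ are never used, because openness of $S$ makes the conclusion essentially vacuous. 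What your proof buys is an honest transfer of (almost) inaccessibility along $f$, resting on the fact that the open frame homomorphism $f^{*}$ preserves the Heyting operation and hence sends $(M\smallsetminus f[S])$-dense elements to $(L\smallsetminus S)$-dense elements; that machinery is exactly what is needed when the ambient sublocale is \emph{not} open and the notion has real content (compare Proposition \ref{inacclocalicmap}, where $T$ is closed nowhere dense and the paper argues at your level of detail). For the open case at hand your proof is sound but does far more work than necessary; one small remark is that $\cl_{f[S]}(A)=\overline{A}\cap f[S]$ holds for arbitrary sublocales, so your appeal to complementedness of $f[S]$ there is superfluous (though harmless).
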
 
	\begin{proof}
		Follows from Proposition \ref{almostinaccesprop}(3), (2) and (1).
	\end{proof}
	\begin{proposition}\label{inacclocalicmap}
		Let $f:L\rightarrow M$ be localic map such that both $f$ and $f^{*}$ send dense elements to dense elements and let $T\in\mathcal{S}(M)$ be closed nowhere dense. Then  \begin{enumerate}
			\item $f_{-1}[\mathcal{S}_{\Inac}(T)]\subseteq\mathcal{S}_{\Inac}(f_{-1}[T])$, and 
			\item If $f$ is open, then $f_{-1}[\mathcal{S}_{\Ainac}(T)]\subseteq\mathcal{S}_{\Inac}(f_{-1}[T])$.
		\end{enumerate} 
	\end{proposition}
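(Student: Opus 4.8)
The plan is to reduce both inclusions to closed nowhere dense sublocales lying inside the relevant open complements, and then to transport the ``witnessing'' dense element across $f$, exploiting that $f$ is infimum‑preserving (hence preserves finite meets) and, by hypothesis, carries dense elements to dense elements. Write $T=\mathfrak{c}_M(t)$; since $T$ is closed nowhere dense, $t$ is dense in $M$, hence $f^{*}(t)$ is dense in $L$ and $f_{-1}[T]=f_{-1}[\mathfrak{c}_M(t)]=\mathfrak{c}_L(f^{*}(t))$ is closed nowhere dense with $L\smallsetminus f_{-1}[T]=\mathfrak{o}_L(f^{*}(t))$. For every $A\in\mathcal{S}(M)$, monotonicity of $f_{-1}[-]$ gives $f_{-1}[A]\subseteq f_{-1}[T]$, so $f_{-1}[A]\in\mathcal{S}(f_{-1}[T])$ always, and only the accessibility clause remains to be checked.

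For (1), I would let $A$ be $T$-inaccessible. As in the reductions used throughout the paper, it suffices to test nowhere dense sublocales of $L\smallsetminus f_{-1}[T]$ that are closed in $L\smallsetminus f_{-1}[T]$ (replacing an arbitrary such $N$ by $\overline{N}\cap(L\smallsetminus f_{-1}[T])$ enlarges it while leaving its closure in $L$ unchanged); such a sublocale has the form $K=\mathfrak{o}_L(f^{*}(t))\cap\mathfrak{c}_L(z)$ with $f^{*}(t)\wedge z$ dense in $L$, so $z$ is dense and, by the identity $\overline{\mathfrak{o}(a)\cap\mathfrak{c}(b)}=\mathfrak{c}(a\rightarrow b)$, $\overline{K}=\mathfrak{c}_L(f^{*}(t)\rightarrow z)$ with $f^{*}(t)\rightarrow z\geq f^{*}(t)\wedge z$ dense in $L$. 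Then $f(f^{*}(t)\rightarrow z)$ is dense in $M$, hence $(M\smallsetminus T)$-dense, so Proposition \ref{inacccharact} gives $A\cap\mathfrak{c}_M\bigl(f(f^{*}(t)\rightarrow z)\bigr)=\mathsf{O}$. Using monotonicity of $f$ one gets $f[f_{-1}[A]\cap\overline{K}]\subseteq f[f_{-1}[A]]\cap f[\mathfrak{c}_L(f^{*}(t)\rightarrow z)]\subseteq A\cap\mathfrak{c}_M(f(f^{*}(t)\rightarrow z))=\mathsf{O}$, and then the adjunction $f[-]\dashv f_{-1}[-]$ together with $f_{-1}[\mathsf{O}_M]=\mathfrak{c}_L(f^{*}(1_M))=\mathsf{O}_L$ forces $f_{-1}[A]\cap\overline{K}=\mathsf{O}$. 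Hence $f_{-1}[A]\in\mathcal{S}_{\Inac}(f_{-1}[T])$.

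For (2), $f$ is open, so (from \cite{PPT}) $f_{-1}[\overline{B}]=\overline{f_{-1}[B]}$ for all $B\in\mathcal{S}(M)$, and $f^{*}$, being an open frame homomorphism, preserves the Heyting implication. I would run the same reduction: take $A$ almost $T$-inaccessible and $K=\mathfrak{o}_L(f^{*}(t))\cap\mathfrak{c}_L(z)$ as above, so $\overline{K}=\mathfrak{c}_L(f^{*}(t)\rightarrow z)$ with $z$ dense. A direct computation with the interior formula $\Int_{A}(\overline{F}\cap A)=A\cap\bigl(L\smallsetminus\overline{A\cap(L\smallsetminus\overline{F})}\bigr)$ from the text evaluates $\Int_{f_{-1}[T]}(f_{-1}[T]\cap\overline{K})$ as a closed‑minus‑closed expression in $f^{*}(t)$ and $z$, and the same formula in $M$ evaluates $\Int_{T}(T\cap\overline{N})$ for $N:=\mathfrak{o}_M(t)\cap\mathfrak{c}_M(f(z))$, which is $(M\smallsetminus T)$-nowhere dense because $f(z)$, hence $t\wedge f(z)$, is dense in $M$. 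Applying $f_{-1}[-]$ to the $M$-expression and using that $f^{*}$ preserves $\rightarrow$ together with the counit inequality $f^{*}(f(z))\leq z$, one checks $\Int_{f_{-1}[T]}(f_{-1}[T]\cap\overline{K})\subseteq f_{-1}[\Int_{T}(T\cap\overline{N})]$. Since $A$ is almost $T$-inaccessible, Proposition \ref{complementedremote} gives $A\cap\Int_{T}(T\cap\overline{N})=\mathsf{O}$, so $f_{-1}[A]\cap\Int_{f_{-1}[T]}(f_{-1}[T]\cap\overline{K})\subseteq f_{-1}[A\cap\Int_{T}(T\cap\overline{N})]=\mathsf{O}$, and Proposition \ref{complementedremote} applied in $L$ delivers the conclusion.

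The hard part will be the behaviour of $f_{-1}[-]$ on interiors in part (2): preimages commute with closures only because $f$ is open, and the $L$-side interior must be forced inside $f_{-1}$ of an $M$-side interior, which works only after channelling the arbitrary element $z\in L$ through the counit $f^{*}f\leq\mathrm{id}$ via the choice $N=\mathfrak{o}_M(t)\cap\mathfrak{c}_M(f(z))$ and invoking preservation of $\rightarrow$ by $f^{*}$; pinning down this inclusion exactly (rather than merely expecting an equality of interiors) is the delicate point. A secondary but necessary step in both parts is justifying the reduction to closed nowhere dense sublocales of the open complement, which rests on the facts (from \cite{N}) that a sublocale is nowhere dense iff its closure is, and that a sublocale and its closure‑in‑a‑larger‑sublocale have the same closure in the ambient locale.
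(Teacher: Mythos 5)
Your argument is sound and it reaches the same conclusions as the paper (note that in part (2) both you and the paper actually establish $f_{-1}[A]\in\mathcal{S}_{\Ainac}(f_{-1}[T])$; the $\mathcal{S}_{\Inac}$ on the right-hand side of the printed statement is evidently a typo). The mechanics, however, differ from the paper's. The paper transports the \emph{sublocale} $N$ forward: it shows $f[N]$ is nowhere dense in $M\smallsetminus T$ (since $f_{-1}$ preserves closed nowhere dense sublocales, $L\smallsetminus f_{-1}[T]$ is open and dense, hence $N\in\NdS(L)$, hence $f[N]\in\NdS(M)$ and then $f[N]\in\NdS(M\smallsetminus T)$), applies the (almost) inaccessibility of $A$ to $f[N]$, and pulls the resulting triviality back through $f_{-1}$, using $f^{*}f\leq\mathrm{id}$ in (1) and $f_{-1}[\overline{B}]=\overline{f_{-1}[B]}$ in (2). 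You instead transport the \emph{witnessing dense element}: after reducing to closed nowhere dense $K=\mathfrak{o}(f^{*}(t))\cap\mathfrak{c}(z)$ you push $f^{*}(t)\rightarrow z$ (resp.\ $z$) forward and compare Heyting-arrow expressions, invoking in (2) that open frame homomorphisms preserve $\rightarrow$ --- a standard fact, though not one the paper records. Your reduction to closed nowhere dense sublocales of $L\smallsetminus f_{-1}[T]$ is legitimate, since a sublocale and its closure in $L\smallsetminus f_{-1}[T]$ have the same closure in $L$. What your route buys is a fully element-level computation; what it costs is one step you left implicit: in (1) you need $\mathfrak{c}_{M}(f(f^{*}(t)\rightarrow z))$ to be the closure of an $(M\smallsetminus T)$-closed nowhere dense sublocale, i.e.\ you need $f(f^{*}(t)\rightarrow z)$ to lie in $\mathfrak{o}(t)$ and not merely to be dense in $M$ --- otherwise Proposition \ref{inacccharact} only yields $A\cap\mathfrak{c}\bigl(t\rightarrow f(f^{*}(t)\rightarrow z)\bigr)=\mathsf{O}$, which is the wrong (smaller) closed sublocale. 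This does hold, by the localic-map identity $f(f^{*}(a)\rightarrow b)=a\rightarrow f(b)$, which gives $f(f^{*}(t)\rightarrow z)=t\rightarrow f(z)\in\mathfrak{o}(t)$; but it needs to be said, as the argument genuinely breaks without it.
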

	\begin{proof} (1) Let $T\in\mathcal{S}(M)$ be closed and nowhere dense and choose $A\in \mathcal{S}_{\Inac}(T)$. Then $f_{-1}[A]\subseteq f_{-1}[T]$. Let $N\in\NdS(L\smallsetminus f_{-1}[T])$. Since, by \cite[(5.1)]{FPP}, $P\smallsetminus g_{-1}[C]\subseteq g_{-1}[R\smallsetminus C]$ for every localic map $g:P\rightarrow R$ with $C\in\mathcal{S}(R)$, we get that $N\subseteq L\smallsetminus f_{-1}[T]\subseteq f_{-1}[M\smallsetminus T]$. Therefore $f[N]\subseteq f[f_{-1}[M\smallsetminus T]]\subseteq M\smallsetminus T$. We show that $f[N]\in\NdS(M\smallsetminus T)$. Because $f^{*}$ is weakly open, it follows from \cite[Theorem 2.29.]{N} that $f_{-1}[-]:\mathcal{S}(M)\rightarrow\mathcal{S}(L)$ preserves closed nowhere dense sublocales so that $f_{-1}[T]$ is closed nowhere dense in $L$. Therefore $L\smallsetminus f_{-1}[T]$ is open and dense. Now, $N$ being nowhere dense in $L\smallsetminus f_{-1}[T]$ implies $N$ is nowhere dense in $L$. Since $f$ sends dense elements to dense elements, it follows from \cite[Lemma 2.33.]{N} that $f[-]$ preserves nowhere dense sublocales so that $f[N]$ is nowhere dense in $M$. Observe that $f[N]$ is nowhere dense in $M\smallsetminus T$. To see this, let $y\in M$ be such that $\mathfrak{o}(y)\cap (M\smallsetminus T)\subseteq \overline{f[N]}\cap (M\smallsetminus T)$. Then $\mathfrak{o}(y)\cap(M\smallsetminus T)\subseteq\overline{f[N]}$. Because $M\smallsetminus T$ is open in $M$ and $f[N]\in\NdS(M)$, we have that $\mathfrak{o}(y)\cap (M\smallsetminus T)=\mathsf{O}$ making $\mathfrak{o}(y)\subseteq T$. But $T$ is nowhere dense in $M$, so $\mathfrak{o}(y)=\mathsf{O}$ implying that \begin{equation}\label{eqf1}
			f[N]\in\NdS(M\smallsetminus T).
		\end{equation} $S$-inaccessibility of $A$ implies $A\cap \overline{f[N]}=\mathsf{O}$. Therefore 
		$$\mathsf{O}=f_{-1}\left[A\cap \overline{f[N]}\right]=f_{-1}[A]\cap f_{-1}\left[\overline{f[N]}\right]=f_{-1}[A]\cap \mathfrak{c}\left(h\left(f\left(\bigwedge N\right)\right)\right)\supseteq f_{-1}[A]\cap \overline{N}.$$ Thus $f_{-1}[A]\in\mathcal{S}_{\Inac}(f_{-1}[T])$.

		(2) Assume that $f$ is open. Set $T=\mathfrak{c}(b)$ for some $b\in M$ and choose $A\in\mathcal{S}_{\Ainac}(T)$. (\ref{eqf1}) still holds, so
		\[A\subseteq \mathfrak{c}(b)\cap \overline{\mathfrak{c}(b)\cap \left(M\smallsetminus \overline{f[N]}\right)}\\
		= \mathfrak{c}(b)\cap \overline{\mathfrak{c}(b)\cap \mathfrak{o}\left(\bigwedge f[N]\right)}.\] Therefore 
		\begin{align*}
			f_{-1}[A]&\quad\subseteq\quad  \mathfrak{c}(f^{*}(b))\cap f_{-1}\left[\overline{\mathfrak{c}(b)\cap \mathfrak{o}\left(\bigwedge f[N]\right)}\right]\\
			&\quad=\quad \mathfrak{c}(f^{*}(b))\cap \overline{f_{-1}\left[\mathfrak{c}(b)\cap \mathfrak{o}\left(\bigwedge f[N]\right)\right]}\quad\text{since }f \text{ is open}\\
			&\quad=\quad \mathfrak{c}(f^{*}(b))\cap \overline{\mathfrak{c}(f^{*}(b))\cap \mathfrak{o}\left(h\left(f\left(\bigwedge N\right)\right)\right)}\\
			&\quad\subseteq\quad \mathfrak{c}(f^{*}(b))\cap \overline{\mathfrak{c}(f^{*}(b))\cap \mathfrak{o}\left(\bigwedge N\right)}\\
			&\quad=\quad f_{-1}[T]\cap \overline{f_{-1}[T]\cap \left(L\smallsetminus\overline{N}\right)}\\
			&\quad=\quad \cl_{f_{-1}[T]}\left(f_{-1}[T]\cap \left(L\smallsetminus\overline{N}\right)\right).
		\end{align*}Thus $f_{-1}[A]\in \mathcal{S}_{\Ainac}(f_{-1}[T])$.
	\end{proof}
	


\end{document}